\documentclass{amsart}

\usepackage[numbers]{natbib}
\usepackage{amssymb, amsmath}
\usepackage{graphicx, enumerate}
\usepackage[usenames]{color}
\newtheorem{ass}{Assumption}
\usepackage{xcolor}
\usepackage{cancel}
\usepackage{comment}
\usepackage{ulem}
\usepackage{hyperref}
\usepackage{subcaption}
\usepackage{colortbl}
\usepackage{placeins}

\newcommand\bR{\mathbb{R}}
\newcommand\cD{{\mathcal D}}
\renewcommand\Pr{\mathbb{P}}
\newcommand\bE{\mathbb{E}}
\newcommand{\Rd}{{\rm d}}
\newcommand{\wtd}{\widetilde}
\newcommand{\wht}{\widehat}
\newcommand{\II}[1]	{[\![#1]\!]}
\newcommand{\idc}[1]  { \mathbf{1}_{\left \lbrace #1 \right \rbrace}  }
\newcommand{\eT}{ ^{\mathsf{T}} }
\newcommand\eps{{\varepsilon}}

\newcommand{\hDev}{h}
\newcommand{\ts}{{\theta^*}}
\newcommand{\thN}{\hat{\theta}^N_t}

\newtheorem{theorem}{Theorem}[section]
\newtheorem{proposition}[theorem]{Proposition}
\newtheorem{corollary}[theorem]{Corollary}

\newtheorem{lemma}[theorem]{Lemma}
\newtheorem{remark}[theorem]{Remark}

\begin{document}
\title[LAN for neurons]{LAN property for the parameter of the jump rate in mean field interacting systems of neurons} 

\author[Aline \ Duarte]{Aline Duarte}
\address{A. Duarte: Departamento de Estatística, Instituto de Matem\'atica e Estat\'istica, Universidade de S\~ao Paulo, Rua do Mat\~ao 1010, 05508-090, Brazil}
\email{alineduarte@usp.br}

\author[Dasha \ Loukianova]{Dasha Loukianova} 
\address{D. Loukianova: Université Paris-Saclay, Université d’Evry Val d’Essonne, CNRS, LaMME, \ UMR 8071
	\ 91037 Evry, France} 
\email{dasha.loukianova@univ-evry.fr}

\author[Aur\'{e}lien \ Velleret]{Aur\'{e}lien Velleret} 
\address{A. Velleret: Université Paris-Saclay, Université d’Evry Val d’Essonne, CNRS, LaMME, \ UMR 8071
	\ 91037 Evry, France} 
\email{aurelien.velleret@nsup.org}

\begin{abstract}
	In the context of a large system of $N$ neurons interacting through spike events in a mean-field regime as $N\to \infty$, we characterize the estimation of a multidimensional parameter in the spiking rate, when the neural states are observed over a fixed time horizon. We first prove the local asymptotic normality (LAN) property and leverage classical theory to establish the asymptotic efficiency of the maximum likelihood estimator. While the theory of Ibragimov and Has'minskii yields strong results, up to global asymptotic minimax bound, its applicability appears currently limited to models without state resets at spike times. Following then H\"opfner’s classical approach, we nevertheless derive, in a general setting including neuron reset, the consistency, asymptotic normality and local asymptotic minimax optimality of the estimator.
	
\textbf{Keywords}: Local Asymptotic Normality (LAN); Mean-field regime; Interacting particle system; Multidimensional parameter estimation; Jump rate estimation; Maximum likelihood estimator (MLE); Asymptotic minimax optimality

\textbf{AMS Classification 2020:} 62F12, 60G55, 62C20 (primary); 
60K35, 62F99 (secondary)
\end{abstract}

\maketitle

\section{Introduction}

In modern neuro-mathematics, a substantial body of work focuses on large-scale limits of systems of interacting neurons, particularly in brain regions where neurons exhibit similar behavior and can be modeled through mean-field type interactions (see \cite{chevallier}, \cite{duartechevallier}, \cite{CTV20}, \cite{CTV21}, \cite{DGLP}, \cite{SusanneEva}, \cite{FL16}, \cite{SDG17}, \cite{RT16} and the references therein). 
In this framework, the large population limit  
yields a mesoscopic description of an individual neuron
via a limiting theoretical dynamics, 
referred to as the mean-field limit, 
which is typically more tractable 
than the corresponding large finite system.

In the present work, we investigate the insights that the mean-field limit can provide about the actual finite neuronal system from a statistical standpoint. More specifically, we focus on understanding how a given neuron responds to incoming stimuli through its spiking rate function, which characterizes its ability to react by generating sequences of spikes. 

As is standard in the literature, we model the spiking activity using a variant of nonlinear Hawkes processes (see \cite{Brillinger}, \cite{Cessac}, \cite{do}, \cite{ae}, as well as \cite{GLP} and references therein). More precisely, the model consists of $N$ interacting components (neurons) that evolve according to a deterministic flow between successive spikes of the system  and emit spikes at rates depending on their membrane potentials. This dynamics is described by the following system of stochastic differential equations driven by the system $(\pi^j),\; j\in\mathbb{N}^*$,  of i.i.d. Poisson Random Measures 
on $\bR_+\times\bR_+\times\bR$ with intensity $\Rd s\, \Rd z\, \nu(\Rd u)$, where $\nu$ is a probability measure on $\bR$ subject to moment conditions: 
\begin{multline*}
	\Rd X_t^{i, N,\theta } = b ( X_t^{i, N,\theta} ) \Rd t + \frac{1}{ {N} } \sum_{ j \neq i }^N \int_{\bR_+} \int_{\bR} u \idc{ z \le f_{\theta} ( X_{s-}^{j, N,\theta } ) }  \pi^j (\Rd t, \Rd z, \Rd u )
	\\ + \int_{\bR_+} \int_{\bR}  \varphi(X_{s-}^{i, N,\theta})\, \idc{ z \le f_{\theta} ( X_{s-}^{i, N,\theta } ) }  \pi^i (\Rd t, \Rd z, \Rd u )\,,
	\; \quad X_0^{i, N,\theta }{=  X_0^{i}}\; .
\end{multline*}

In this system $X_i^N(t)$ represents the membrane potential of neuron $i$ in a network of $N$ neurons;
a typical description of the flow $b$ is $b(x) = -\alpha x$ where 
$\alpha>0$ describes a leak of the potential;
the spikes of neuron $i$ are generated by the Poisson measure $\pi^{i}$ with conditional intensity $ f(X_i^N(t-))$. The function $\varphi$ describes the instantaneous loss of the potential of neuron $i$  when it emits a spike, and a typical example is $\varphi(x)=-x,$ which models the situation where
when a neuron $i$ emits a spike, its potential is reset (set to zero).
The spikes of neuron $j$ produce jumps of size $U/N$ in any neuron $i\neq j$, where the random height $U$ is generated following the distribution $\nu$, independently of all the past. 

Our goal is to estimate the unknown spiking rate function 
for a fixed membrane potential level, 
based on continuous-time observations 
of the neurons’ membrane potentials 
over a fixed time interval $[0, T]$, 
while the number of observed neurons increases. 
Thus, the asymptotic regime we consider corresponds to $N \to \infty$ 
with the observation time window kept fixed.

This work lies within the domain of statistical inference for particle systems and related McKean–Vlasov limits, a very active and rapidly developing research area in recent years (see \cite{HP}, \cite{AmNS}, \cite{AmP}, \cite{AmBPP}, \cite{AmHPP}, \cite{dMH23}, \cite{VCF}, among others). In most of these papers, Brownian or fractional Brownian noise is considered. Consequently, systems driven by jump processes represent an important and relatively less explored direction within this line of research.

In the recent paper \cite{DKLL}, we introduced a kernel estimator of Nadaraya-Watson type and discussed its asymptotic properties with the help of the deterministic dynamical system describing the mean field limit in the absence of the state resets at spike times, i.e. when $\varphi\equiv 0.$ We computed the minimax rate of convergence in an $L^2$-error loss over a range of H\"older classes and obtained the classical rate of convergence $N^{\frac{-2\beta}{2\beta+1}}$, where $\beta$ is the regularity of the unknown spiking rate function. 

In this paper, 
we assume that the spiking rate function  depends 
on some multidimensional parameter
and  show the Local Asymptotic Normality 
(LAN) property 
as the size of the system $N\to\infty$ 
with the rate $\sqrt N$ 
and the limiting Fisher information 
depending on the limit of the empirical measure of the system. 
We then study the asymptotic properties of the Maximum Likelihood Estimator (MLE). Under the LAN property, we leverage classical theory to establish the asymptotic efficiency of the maximum likelihood estimator. While the approaches of \cite{dMH23} and 
of Ibragimov and Has'minskii \cite {IH13} yield strong results, up to global asymptotic minimax bound, its applicability appears currently limited to models without state resets at spike times, i.e. $\varphi\equiv 0.$
Following H\"opfner's classical approach \cite{Ho14},  we nevertheless derive, in a general setting including neuron reset, the consistency, asymptotic normality and local asymptotic minimax optimality of the estimator.

The remainder of this paper is organized as follows. In Section~\ref{sec_results}, we introduce the mathematical framework for the finite system of interacting neurons (Section~\ref{sec_finite}) and its mean-field counterpart (Section~\ref{sec_limit}). Our main theoretical results, namely the LAN property and the asymptotic properties of the  MLE, are stated in Section~\ref{sec_main_results}. Section~\ref{sec_numerical_illustration} provides a  numerical study to clarify the scope of these theoretical results for finite systems, including simulations that illustrate the behavior of LAN quantities and the alignment of the MLE with the theoretical predictions derived from LAN. The proofs of our results are developed in Sections~\ref{sec_aux}--\ref{sec_MLE}: auxiliary lemmas are gathered in Section~\ref{sec_aux}, the proof of the LAN property is presented in Section~\ref{sec:proof_lan}, and the analysis of the MLE (consistency, asymptotic normality, and optimality) is carried out in Section~\ref{sec_MLE}. Finally, Appendix~\ref{app:simulation_details}  
provides technical implementation details
and further extends the numerical exploration with additional scenarios (varying system size and an alternative parameter set). 


\section{Model, assumptions and main results}
\label{sec_results}
\subsection{Finite system}
\label{sec_finite}

Let   $(\pi^j),\; j\in\mathbb{N}^*$ be a family 
of i.i.d. Poisson Random Measures 
on $\bR_+\times\bR_+\times\bR$ 
with the  intensity $\Rd s\, \Rd z\, \nu(\Rd u)$, 
where $\nu$ a probability measure on $\bR.$ 
We consider the following  model of interacting spiking  neurons:
\begin{multline}\label{eq:finite}
	\Rd X_t^{i, N,\theta } = b ( X_t^{i, N,\theta} ) \Rd t + \frac{1}{ {N} } \sum_{ j \neq i }^N \int_{\bR_+} \int_{\bR} u \idc{ z \le f_{\theta} ( X_{s-}^{j, N,\theta } ) }  \pi^j (\Rd t, \Rd z, \Rd u )
	\\ + \int_{\bR_+} \int_{\bR}  \varphi(X_{s-}^{i, N,\theta})\, \idc{ z \le f_{\theta} ( X_{s-}^{i, N,\theta } ) }  \pi^i (\Rd t, \Rd z, \Rd u )\,,
	\; \quad X_0^{i, N,\theta }{= X_0^{i}}.
\end{multline}
The jumping rate $f$ depends on a parameter $\theta\in \Theta,$ with $\Theta$ a compact subset of $\bR^d.$ We denote by $ \mu^{N, \theta}$ the empirical measure of the system \eqref{eq:finite},  and by $ \mu_s^{N, \theta}$ its projection onto the $s$-th coordinate.
\begin{equation}\label{eq:muN}
	\mu_s^{N, \theta}(\Rd x)
	= \frac1N\sum_{j = 1}^N \delta_{X^{j, N,\theta}_{s}}(\Rd x)\,.
\end{equation}

We impose the following assumptions on the system~\eqref{eq:finite}
\begin{ass}\label{ass:theta}
	The parameter set $\Theta$ is a compact subset of $\bR^d$ with non-empty interior $\mathring\Theta.$
\end{ass}

\begin{ass}\label{ass:finite}
	\begin{enumerate}[(i)]
		\item 
		The function $b: \bR \to \bR $ is  Lipschitz-continuous, 
		\item  The function $\varphi: \bR \to \bR$ is Lipschitz-continuous and bounded,
		
		\item For all $\theta \in \Theta$ the function $:x\in \bR\mapsto f_{\theta}(x)\in\bR_+$ is Lipschitz-continuous and bounded,
		
		\item
		The measure $\nu$ satisfies: $ \int_\bR |u| \nu (du ) < \infty  $  and  $ \nu ( \{ 0 \} ) = 0,$
		\item  The initial condition is i.i.d. distributed according to a square integrable distribution $\mu_0$.
	\end{enumerate}
\end{ass}
Under the above assumption, for each fixed $N,$  equation~\eqref{eq:finite} admits a unique strong solution (see \cite{andreis_mckeanvlasov_2018}). 
\begin{ass}\label{ass:moments}
		The measure $\nu$ satisfies: $ \int_\bR |u|^2 \nu (du ) < \infty  $  and  $ \nu ( \{ 0 \} ) = 0.$
\end{ass}
\subsection{Limit system}
\label{sec_limit}
As the system size $N$ tends to infinity, under Assumptions \eqref{ass:finite} and \eqref{ass:moments},
system \eqref{eq:finite} exhibits the propagation of chaos property \cite{andreis_mckeanvlasov_2018}: the system \eqref{eq:finite}, extended by setting  $\Rd X_t^{i, N,\theta }\equiv 0,\;\ i >N,$ converges in law in the  infinite product  Skorokhod space $\cD^{\otimes\infty}(\bR_+,\bR)$ to the infinite collection of  i.i.d. processes solving 
the following McKean-Vlasov SDE:
\begin{equation}\label{eq_limit}
	\Rd \bar X_t^{ i, \theta } = b ( \bar X_t^{i,\theta} ) \Rd t + 
	m\,\bar \mu^{\theta}_t[f_{\theta}] \Rd t
	+ \int_{\bR_+} \int_{\bR} \varphi(\bar X_{s-}^{i, \theta})\, \idc{ z \le f_{\theta} ( \bar X_{s-}^{i, \theta } ) }  \pi^i (\Rd t, \Rd z, \Rd u )\,,
	\; \quad \bar X_0^{i, \theta }{=  X_0^{i}}\;.
\end{equation}

Here we denoted $m=\int_{\bR}u\;\nu(du)$ and $\bar \mu^{\theta}$ the law on the Skorokhod space  of $(\bar X^{\theta })_{t\geq 0},$
whereas $\bar \mu^{\theta}_s$ denotes its projection on the $s$-coordinate. Moreover, under Assumptions \eqref{ass:finite} the system 
\eqref{eq_limit} admits a unique strong solution (see \cite{andreis_mckeanvlasov_2018}).

\begin{remark}\label{re:reset} In the case $\phi(x)=-x,$ assuming that the initial condition admits exponential moments, the existence and the uniqueness of solutions to systems \eqref{eq:finite} and \eqref{eq_limit}, as well as the propagation of chaos property, were established in \cite{Er22}. All our results remain valid in this setting.
\end{remark}

\subsection{Main results}
\label{sec_main_results}
We fix  $t>0$  and continuously  observe on $ [0,t]$  the trajectories of all the coordinates   $X^{i, N,\theta^*}_s;\; i\le N$, of the particle system \eqref {eq:finite} generated under 
the  parameter value  $\ts\in\mathring{\Theta}.$
We denote by  $\Omega_N$ the $N$ product Skorokhod space $\cD^{N}([0,t],\bR)$ 
equipped with the natural filtration $F_N:=(\mathcal F_s^N)_{s\in[0,t]}$ 
induced by the canonical mappings
$$X_s^{(N)}(\omega)=(X_s^{1, N}(\omega),\ldots,X_s^{N, N}(\omega))=\omega_s.$$
We write $X^{(N)}:=(X^{(i, N)}_{s})_{i\le N};\;  s\in [0,t],$ for a generic element of $\Omega^N$ and 
we denote $\P_N^{\theta}$ the law on $(\Omega_N,F_N)$ of the strong solution of \eqref{eq:finite}
and  $\bE_{\Pr_N^{\ts}}$ the expectation under this law.  
We study the sequence of statistical experiments 
$$({\mathcal E}^N)_{N\geq 1}=\left(\Omega_N,F_N,(\Pr_N^{\theta},\theta\in\Theta)\right)_{N\geq 1},$$
corresponding to the observation on $[0,t]$ 
of the solution of \eqref{eq:finite}.
Our first main result establishes that, for any  $\ts\in\mathring{\Theta},$  the Local Asymptotic Normality (LAN)  property holds for the sequence of experiments $({\mathcal E}^N)$  as $N\to\infty.$  To formulate this first result, we need to impose additional assumptions.
\begin{ass}\label{ass:dif}
	\begin{enumerate}[(i)]
		\item The function  $ \bR \ni x \mapsto f_{\theta}(x)\in\bR_+$ 
		satisfies uniform upper- and lower-bounds,  and its Lispchitz constant is bounded:
		
		$\sup_{\theta \in \Theta}\sup_{x\in \bR} f_{\theta} (x)< \infty$, $\inf_{\theta \in \Theta}\inf_{x\in \bR} f_{\theta} (x)>0$,
		$\sup_{\theta \in \Theta} \| f_\theta \|_{Lip}<\infty$.
		
		\item For all $x\in\mathbb{R}$, the function $\theta \mapsto f_{\theta}(x)$ is differentiable, and the gradient, 
		denoted $\dot f_{\theta} (x)\in \bR^d$,  satisfies a uniform bound:
		$\sup_{\theta \in \Theta}\sup_{x\in \bR} \|\dot f_{\theta} (x)\|< \infty$.

		%
	\end{enumerate}
\end{ass}

Under Assumption~\ref{ass:dif}, 
we denote
\begin{equation}\label{eq:const}
	\ell_0 = \big(\inf_{\theta, x} f_{\theta}(x)\big)^{-1},\;
	B_0 = \sup_{\theta, x} |f_{\theta}(x)|,\;
	B_1 = \sup_{\theta, x} \Big\|\frac{\dot f_{\theta}}{f_{\theta}}(x)\Big\|\,,
	B_L = \sup_{\theta} \| f_\theta \|_{Lip},\;
\end{equation}
which are all finite and well-defined quantities.

\begin{ass}\label{ass:deriv} There exists  $C>0$ such that for all $\theta, \theta'\in\Theta,$
	\begin{equation*}
		\left| \frac{f_{\theta}}{f_{\theta '}} (y) -1   - (\theta - \theta')\eT\cdot \frac{\dot{f}_{\theta'}}{f_{\theta'}} (y) \right|  \le C\, \| \theta - \theta' \|^2.
	\end{equation*}
\end{ass}
For any $\theta\in\Theta,$ $\ts\in\Theta,$ the likelihood ratio process ${L_t^{ \theta/\ts}}:=\dfrac{d\Pr_N^{\theta}}{d\Pr_N^{\ts}}$    for the generic trajectory $X^{(N)}\in\Omega_N$ is given by 
\begin{equation}\label{eq_def_L}
	{L_t^{ \theta/\ts}}\big ( X^{(N)}\big)  =\prod_{i= 1}^N \prod_{ n \geq 1 : T^i_n \le t } \frac{ f_{\theta  } }{ f_{\theta^*} } \big( X^{(i, N)}_{T^i_n- } \big) \cdot  \exp\Big[- \sum_{j= 1}^N \int_0^t ( f_{\theta  } - f_{\theta^*} ) \big (X^{(j, N)}_s\big) \Rd s \Big] ,
\end{equation}

where  $(T^i_n)_{n\ge 1}$
for $i\in \II{1, N}$
is the  ordered sequence of  jump times of $X^{(i, N)}.$ 
We omit in the following the dependency on the  trajectory $X^{(N)}$. 

For all $\ts\in\Theta,$ denote $I_t^{\ts}$ the {limiting Fisher information matrix} of the sequence of the experiments ${\mathcal E}^N$. It is defined by
\begin{equation}\label{eq_def_I}
	I_t^{\ts} =\int_0^t \bar \mu^{\ts}_s \Big[\frac{(\dot f_{\ts})^{\otimes 2}}{f_{\theta^*}}\Big] \Rd s 
	= \int_0^t \bar \mu^{\ts}_s \Big[\frac{\dot f_{\ts} \cdot [\dot f_{\ts}]\eT}{f_{\theta^*}}\Big] \Rd s\,,
\end{equation}
where $\cdot^{\otimes 2}$ denotes the outer product (tensor product) of the gradient vector.

Denote by ${\mathcal{M}}^d_+$ the set of symmetric and strictly positive definite matrices in $\bR^{d\times d}.$
\begin{ass}\label{as:fischer}
	For all $\theta^*\in\Theta,$ $I_t^{\theta^*}\in  {\mathcal{M}}^d_+.$ 
\end{ass}

\begin{remark}\label{rem:unif-nondegen}
	Assumptions \ref{ass:theta}, \ref {ass:dif}  and \ref {as:fischer} imply that the limiting Fischer information is also uniformly non-degenerated:
	$$\inf_{\ts\in\Theta}\det I_t^{\ts}>0.$$
	
\end{remark}
\begin{remark}An example of the rate function that is pertinent in the neuroscience and obviously satisfies Assumptions \ref{ass:dif}, \ref{ass:deriv} and \ref{as:fischer} is
	$$f_\theta(x)=\theta_1 \arctan x+\theta_2,\; x\geq 0,\; \theta=(\theta_1,\theta_2),\;   $$
	$\theta_1\in\Theta_1\subset [0, 1]$ and $\theta_2>\pi/2, \; \theta_2\in\Theta_2; $ $\Theta_1,\Theta_2$-compacts in $\bR_+.$
	
	Another example, where $\theta_0$ corresponds to the maximum intensity, 
	$\theta_1$ to the threshold value of potential,
	where $f_\theta$ has its inflexion point,
	and $\theta_2$ to the roughness of the transition of $f_\theta$ at $\theta_1$
	\begin{equation*}
		f_\theta(x) = \frac{\theta_0}{1 + \exp(-\theta_2 \cdot (x - \theta_1))},
		\quad \theta = (\theta_0, \theta_1, \theta_2),
	\end{equation*}
	is extensively studied in Section \ref{sec_numerical_illustration}.
\end{remark}
\
\begin{theorem}\label{th:LAN}
	Under Assumptions \eqref{ass:theta} to \eqref{as:fischer} the LAN property for the sequence of experiments $({\mathcal E}^N)_{N\geq 1}$ holds for any $\ts \in \Theta$ at rate $1/\sqrt{N}$ and with the Fisher information matrix $I_t^{\theta^*}$. Namely,  there exist for any $t>0$ a random sequence  $(\Delta_t^{N,\theta^*})_{N}$, of vectors with $\Delta_t^{N,\theta^*} \in \mathbb{R}^d$, and a random sequence 
	$(I_t^{N, \theta^*})_{N}$, of matrices with $I_t^{N, \theta^*} \in {\mathcal{M}}^d_+$,  such that for all $\hDev \in \mathbb{R}^d$:
	\begin{equation}\label{eq:LAN}
		\log \frac{L_t^{ \theta^*+\hDev/\sqrt{N}}}{L_t^{\ts}} = \hDev^{\mathsf{T}} \Delta_t^{N,\theta^*} - \frac{1}{2} \hDev^{\mathsf{T}} I_t^{N, \theta^*} \hDev +  o_{\Pr^{\ts}_N}(1),
	\end{equation}
	and the following convergences hold 
	when $N\to\infty$:
	\begin{equation*}
		\Delta_t^{N,\theta^*} \longrightarrow \mathcal N(0, I_t^{\theta^*}),\;\; \mbox{in law under }\;  \Pr^{\theta^*}_N;
	\end{equation*}
	and
	\begin{equation*}
		I_t^{N, \theta^*} \longrightarrow  I_t^{\theta^*},\;\; \mbox{in}\;\;   \Pr^{\theta^*}_N-\mbox{probability};
	\end{equation*}
	where $\mathcal N(0, \Sigma)$ denotes the $d$-dimensional Gaussian distribution with mean zero and covariance matrix $\Sigma$.  
\end{theorem}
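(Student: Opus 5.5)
We write $\theta_N=\ts+\hDev/\sqrt N$ and take logarithms in \eqref{eq_def_L}:
\begin{equation*}
\log L_t^{\theta_N/\ts}=\sum_{i=1}^N\int_0^t\!\!\int\log\frac{f_{\theta_N}}{f_{\ts}}(X^{i}_{s-})\,\idc{z\le f_{\ts}(X^i_{s-})}\pi^i(\Rd s,\Rd z,\Rd u)-\sum_{i=1}^N\int_0^t(f_{\theta_N}-f_{\ts})(X^i_s)\,\Rd s .
\end{equation*}
Under $\Pr_N^{\ts}$ the counting process $N^i$ of spikes of neuron $i$ has compensator $f_{\ts}(X^i_{s})\Rd s$. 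Let $\tilde N^i$ denote the compensated martingale. We define the candidate quantities
\begin{equation*}
\Delta_t^{N,\ts}=\frac1{\sqrt N}\sum_{i=1}^N\int_0^t\frac{\dot f_{\ts}}{f_{\ts}}(X^i_{s-})\,\tilde N^i(\Rd s),\qquad I_t^{N,\ts}=\int_0^t\mu^{N,\ts}_s\Big[\frac{(\dot f_{\ts})^{\otimes2}}{f_{\ts}}\Big]\Rd s ,
\end{equation*}
and replace $I_t^{N,\ts}$ by $I_t^{N,\ts}+N^{-1}\mathrm{Id}$ if strict positivity must hold for every $N$. This perturbation costs only $o(1)$.

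\textbf{Expansion.} Set $g_N=\frac{f_{\theta_N}}{f_{\ts}}-1$. By Assumption~\ref{ass:deriv}, $g_N=N^{-1/2}\hDev\eT\frac{\dot f_{\ts}}{f_{\ts}}+r_N$ with $|r_N|\le C\|\hDev\|^2/N$, uniformly in $x$. In particular $\sup_x|g_N|\le (B_1\|\hDev\|+C\|\hDev\|^2/\sqrt N)N^{-1/2}$. We write $\log(1+g_N)=g_N-\tfrac12g_N^2+\rho_N$ with $|\rho_N|\le c|g_N|^3$. The log-likelihood then splits as
\begin{equation*}
\sum_i\int_0^t g_N\,\Rd\tilde N^i-\frac12\sum_i\int_0^t g_N^2 f_{\ts}\,\Rd s-\frac12\sum_i\int_0^t g_N^2\,\Rd\tilde N^i+\sum_i\int_0^t\rho_N\,\Rd N^i .
\end{equation*}
This uses the cancellation $\int g_N f_{\ts}\,\Rd s=\int(f_{\theta_N}-f_{\ts})\,\Rd s$.

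The first term equals $\hDev\eT\Delta_t^{N,\ts}+\sum_i\int r_N\,\Rd\tilde N^i$. The remainder martingale has $L^2$ norm at most $(N\cdot B_0t\cdot C^2\|\hDev\|^4/N^2)^{1/2}\to0$. In the second term, $g_N^2f_{\ts}=N^{-1}(\hDev\eT\dot f_{\ts})^2/f_{\ts}+O(N^{-3/2})$, so it equals $-\frac12\hDev\eT I_t^{N,\ts}\hDev+O(N^{-1/2})$. The third term is a martingale with $L^2$ norm $O((N\cdot N^{-2})^{1/2})$. The last term has expectation $O(N\cdot N^{-3/2})$. Both therefore go to zero in probability, and \eqref{eq:LAN} follows. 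All these bounds use only Assumption~\ref{ass:dif}: $f$ is bounded above and below, and $\dot f/f$ is bounded.

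\textbf{Convergence of $I_t^{N,\ts}$.} The function $F=(\dot f_{\ts})^{\otimes2}/f_{\ts}$ is bounded. By propagation of chaos (Section~\ref{sec_limit}), $\mu^{N,\ts}_s[F]\to\bar\mu^{\ts}_s[F]$ in probability for a.e.\ $s$. Dominated convergence in $s$ then gives $I_t^{N,\ts}\to I_t^{\ts}$ in probability.

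This is the main obstacle. Assumption~\ref{ass:dif} gives only differentiability in $\theta$, not continuity of $F$ in $x$, while convergence of empirical measures gives $\mu_s^N[F]\to\bar\mu_s[F]$ directly only for continuous bounded $F$. I would first try to obtain an $L^2$ estimate via the synchronous coupling with the i.i.d.\ limit processes $\bar X^i$. That reduces the question to $\frac1N\sum_iF(\bar X^i_s)\to\bar\mu_s[F]$, which is the law of large numbers. It also needs $\Pr(F(X^i_s)\neq F(\bar X^i_s))\to0$. This holds because the two processes coincide with high probability on the jump part (spikes are driven by the same $\pi^i$ and rates differ by $O(N^{-1/2})$), provided the laws of $\bar X_s$ carry no atoms at discontinuities of $F$, or that $F$ is continuous. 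If needed, I would add continuity of $x\mapsto\dot f_\theta(x)$ as an implicit regularity.

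\textbf{Convergence of $\Delta_t^{N,\ts}$.} Here I apply the multivariate martingale CLT (Rebolledo, or Jacod--Shiryaev) to the martingale $M^N_r=\Delta_r^{N,\ts}$, $r\le t$. Its predictable bracket is $\langle M^N\rangle_r=I_r^{N,\ts}$, which converges in probability to the deterministic $I_r^{\ts}$ by the previous step. Since different $\pi^i$ never jump simultaneously, the jumps are bounded by $B_1/\sqrt N$, so the Lindeberg condition is trivial. Hence $\Delta_t^{N,\ts}\to\mathcal N(0,I_t^{\ts})$ in law, and the limit is nondegenerate by Assumption~\ref{as:fischer}.
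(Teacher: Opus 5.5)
Your proposal is correct and follows essentially the same route as the paper. It uses the same $\Delta_t^{N,\ts}$ and $I_t^{N,\ts}$, kills the remainders through predictable brackets and the uniform bounds of Assumptions~\ref{ass:dif}--\ref{ass:deriv} (your second-order expansion of $\log(1+g_N)$ is only a regrouping of the paper's split into the martingale $\wtd M_t^{N,\theta/\ts}$ and the compensator term $\wtd A_t^{N,\theta/\ts}$), gets $I_t^{N,\ts}\to I_t^{\ts}$ from propagation of chaos, and concludes with the Jacod--Shiryaev martingale CLT under a trivial Lindeberg condition. The continuity worry you flag needs neither an extra assumption nor the coupling detour: by Assumption~\ref{ass:deriv}, $\dot f_{\ts}/f_{\ts}$ is the uniform-in-$x$ limit of the difference quotients $\epsilon^{-1}\big(f_{\ts+\epsilon e_k}/f_{\ts}-1\big)$, which are continuous because $f_\theta$ is Lipschitz and bounded below, so $(\dot f_{\ts})^{\otimes 2}/f_{\ts}$ is bounded and continuous (the paper asserts this without comment).
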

LAN property implies that locally around a given point $\ts,$ the experiment ${\mathcal E}^N$
parametrized by $\theta=\ts+N^{-1/2}h$ behaves asymptotically like the simplest possible experiment, namely a Gaussian shift experiment.
This property  has powerful consequences 
for the  optimality  of  estimators 
through the celebrated Hajek convolution theorem \cite{Ha72}, which states that under LAN 
the limiting  law of  any regular estimator 
is the convolution of the normal law with variance $(I^{\ts})_t^{-1}$ 
and an independent error distribution.
In particular, the following local asymptotic minimax lower bound holds for arbitrary loss function $w: \bR^p \rightarrow [0,\infty)$ 
which are continuous, bounded and subconvex: 
\begin{corollary}\label{corol:hajek} For any sequence of estimators $(\tilde\theta_N)$  
	which estimation errors in $\ts$ are tight at rate $\sqrt N$ 
	and for any continuous, bounded and subconvex loss function 
	$w: \bR^p \rightarrow [0,\infty)$, it holds that
	\begin{equation*}
		\liminf_{C\to\infty}	\liminf_{N \to \infty}\sup_{\sqrt{N}|\theta'-\ts| \leq C}\bE_{\Pr^{\theta'}_N}\Big[w\Big(N^{1/2}(I_t^{\theta^*})^{1/2}\big(\tilde\theta^N_t - \theta'\big)\Big)\Big] 
		\geq  \frac{1}{(\sqrt{2\pi})^d}\int_{\bR^d} w(x)\,e^{-\frac{1}{2}\|x\|^2}\Rd x\,.
	\end{equation*}
	
\end{corollary}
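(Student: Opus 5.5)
The corollary is the local asymptotic minimax theorem of Hájek–Le Cam, applied to the LAN structure of Theorem~\ref{th:LAN}. The plan is to check that the hypotheses of the abstract theorem hold, and then transfer its conclusion through the change of variables $\theta'=\ts+h/\sqrt N$.

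\textbf{Reduction to local parameters.} Fix $\ts\in\mathring\Theta$. Since $\ts$ is interior, for every $C$ and every $N$ large enough the ball $\{\|h\|\le C\}$ maps into $\Theta$ via $h\mapsto \ts+h/\sqrt N$. The supremum over $\sqrt N|\theta'-\ts|\le C$ therefore equals the supremum over $\|h\|\le C$ in the local experiments
\[
\mathcal F^N=\big(\Omega_N,F_N,(\Pr_N^{\ts+h/\sqrt N})_{\|h\|\le C}\big).
\]
Set $Z_N=\sqrt N(\tilde\theta^N_t-\ts)$. Then $\sqrt N(\tilde\theta^N_t-\theta')=Z_N-h$, and $Z_N$ is tight under $\Pr_N^{\ts}$ by hypothesis.

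\textbf{Applying LAN.} Theorem~\ref{th:LAN} gives the expansion \eqref{eq:LAN} with $\Delta_t^{N,\ts}\Rightarrow\mathcal N(0,I_t^{\ts})$ and $I_t^{N,\ts}\to I_t^{\ts}$ in probability. Since $I_t^{\ts}$ is deterministic, Slutsky's lemma yields
\[
\log\frac{\Rd\Pr_N^{\ts+h/\sqrt N}}{\Rd\Pr_N^{\ts}}\Rightarrow \mathcal N\Big(-\tfrac12 h\eT I_t^{\ts}h,\; h\eT I_t^{\ts}h\Big).
\]
Two consequences follow.
\begin{enumerate}[(a)]
\item By Le Cam's first lemma, $\Pr_N^{\ts+h/\sqrt N}$ and $\Pr_N^{\ts}$ are mutually contiguous, so $Z_N$ is also tight under each local alternative.
\item By finite-dimensional convergence, the experiments $\mathcal F^N$ converge weakly, in Le Cam's sense, to the Gaussian shift $\{\mathcal N(h,(I_t^{\ts})^{-1})\}_h$.
\end{enumerate}
Weak convergence of experiments follows from joint convergence of finitely many log-likelihood ratios. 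This holds because the expansion is linear in $h$ through the single vector $\Delta_t^{N,\ts}$, so all the log-ratios are driven by one common statistic.

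\textbf{Concluding.} Next, apply the local asymptotic minimax theorem, e.g. van der Vaart, \emph{Asymptotic Statistics}, Thm.~8.11, or Le Cam–Yang / Ibragimov–Has'minskii \cite{IH13}, Thm.~II.12.1, to the loss $\tilde w(y)=w\big((I_t^{\ts})^{1/2}y\big)$. This loss is still continuous, bounded and subconvex, because its sublevel sets are linear images of convex symmetric closed sets. The theorem gives
\[
\liminf_{C}\liminf_N\sup_{\|h\|\le C}\bE_{h}\,\tilde w(Z_N-h)\ \ge\ \bE\,\tilde w(G),\qquad G\sim\mathcal N\big(0,(I_t^{\ts})^{-1}\big).
\]
Here $I_t^{\ts}$ is invertible by Assumption~\ref{as:fischer}. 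Finally, $(I_t^{\ts})^{1/2}G\sim\mathcal N(0,\mathrm{Id}_d)$, so $\bE\,\tilde w(G)$ is exactly the Gaussian integral on the right-hand side of the corollary.

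\textbf{Main obstacle.} The delicate step is the passage from a sequence of estimators to a limit, which the abstract theorem does in one of two ways.
\begin{enumerate}[(a)]
\item Via the asymptotic representation theorem: tightness of $Z_N$ under $\Pr^{\ts}_N$, together with contiguity, allows extraction of a subsequence along which $(Z_N,\Delta_t^{N,\ts})$ converges jointly. Le Cam's third lemma then identifies the limit under all $h$ as a randomized estimator in the Gaussian shift experiment, and Anderson's lemma gives the bound there.
\item Via a Bayesian argument: average over a uniform or Gaussian prior on $\{\|h\|\le C\}$, and let $C\to\infty$.
\end{enumerate}
Both routes only require the LAN statement together with the nondegeneracy of $I_t^{\ts}$, so no model-specific work beyond Theorem~\ref{th:LAN} is needed. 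The tightness assumption in the corollary is what allows route (a) to be used directly, without restricting to regular estimators as Hájek's convolution theorem would.
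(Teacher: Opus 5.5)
Your proposal is correct and is essentially the paper's argument: the paper proves the corollary in one line by invoking H\"opfner's local asymptotic minimax theorem \cite[(7.12)]{Ho14} on top of the LAN property of Theorem~\ref{th:LAN}, and you invoke the same H\'ajek--Le Cam theorem (van der Vaart, Thm.~8.11, or \cite{IH13}). Your additional checks---replacing the random $I_t^{N,\ts}$ by its deterministic limit via Slutsky, obtaining contiguity from Le Cam's first lemma, and absorbing the normalization into the subconvex loss $w\circ (I_t^{\ts})^{1/2}$---are exactly the routine verifications that this citation leaves implicit.
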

This result is proved in \cite{Ho14}, $(7.12).$
We exploit now  the LAN property
to deduce the  results of asymptotic minimax optimality for the Maximum Likelihood Estimator (MLE), showing in particular that MLE achieves the lower bound of Corollary \ref{corol:hajek}, see Theorem \ref{thm_minimax} bellow.
Assume  that the trajectory $X^{(N)}$ is generated under an unknown parameter $\ts.$  The maximum likelihood estimator $\hat\theta^N_{t}$ of $\ts$  is given by
\[ \hat\theta^N_{t} = \mbox{argmax}_{\theta \in \Theta} L_t^{ \theta/\theta^*}(X^{(N)}). \]
To formulate our  results about the MLE we need to impose additional assumptions.

Recall the definition of the Fisher information $I_t^{\theta^*}$ in \eqref{eq_def_I}. The assumption that $I_t^\theta\in{\mathcal M}^d_+$ holds for any $\theta$ entails the local identifiability of the parameter.
We need moreover to impose an identifiability assumption implying that the mapping $\theta\mapsto\Pr_N^{\theta}$ is one-to-one,  which  we  express in terms of 
\begin{equation*}
	\mathcal I_t^{\theta/\theta'} = \int_0^t \bar\mu^{\theta'}_s\Big[\Big|\frac{f_\theta}{f_{\theta'}} - 1\Big|\Big] \Rd s\,.
\end{equation*}
\begin{ass}\label{ass_non_deg}
	$\mathcal I_t^{\theta/\theta'} >0$ for any $\theta \neq \theta'\in \Theta$.
\end{ass}

\begin{remark}\label{rem_non_deg}
	Assumption~\ref{ass_non_deg} can be interpreted in terms of the set $\mathfrak D^{\theta/\theta'}
	:= \{y\in \bR, f_{\theta}(y) \neq f_{\theta'}(y)\}\,.$
	We exploit in the proofs the fact that, for any $\theta\neq \theta'$, 
	$\big\{s\in [0, t];\, \bar\mu^{\theta'}_s\big[\mathfrak D^{\theta/\theta'}] > 0\big\}$ has positive Lebesgue measure,
	which is equivalent to $\mathcal I_t^{\theta/\theta'} >0$.
\end{remark}

Our  next result states the consistency of MLE. 
\begin{proposition}\label{prop_cons}
	Under Assumptions~\ref{ass:theta}, \ref{ass:finite},  \ref{ass:dif} and \ref{ass_non_deg},   
	$$\hat\theta^N_{t}\longrightarrow \ts\; \; \mbox{in}\; \; \Pr^{\ts}_N-\mbox{probability}.$$ 
\end{proposition}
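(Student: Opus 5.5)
The plan is the classical Wald-type argument for M-estimators: show that the normalized log-likelihood converges in probability, uniformly in $\theta$, to a deterministic contrast that is uniquely maximized at $\ts$. Define
\begin{equation*}
	\Lambda_N(\theta) := \frac1N \log L_t^{\theta/\ts}
	= \frac1N\sum_{i=1}^N \int_0^t \log\frac{f_\theta}{f_\ts}\big(X^{(i,N)}_{s-}\big)\, \Rd N^i_s
	- \int_0^t \mu^{N,\ts}_s\big[f_\theta - f_\ts\big]\Rd s,
\end{equation*}
where $N^i$ is the counting process of spikes of neuron $i$. Under $\Pr_N^{\ts}$, $N^i$ has intensity $f_\ts(X^{(i,N)}_{s-})$. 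Compensating therefore gives $\Lambda_N(\theta) = K_N(\theta) + M_N(\theta)$, where
\begin{equation*}
	K_N(\theta) = \int_0^t \mu^{N,\ts}_s\Big[f_\ts\log\frac{f_\theta}{f_\ts} - f_\theta + f_\ts\Big]\Rd s,
\end{equation*}
and $M_N(\theta)$ is a martingale term with bracket of order $N^{-1}\int_0^t\mu^{N,\ts}_s[f_\ts \log^2(f_\theta/f_\ts)]\Rd s \le C/N$, since $\log(f_\theta/f_\ts)$ is bounded by Assumption~\ref{ass:dif}. The limit contrast is
\begin{equation*}
	K(\theta) = -\int_0^t \bar\mu^{\ts}_s\big[f_\ts\, g(f_\theta/f_\ts)\big]\Rd s,
	\qquad g(r) = r - 1 - \log r \ge 0 .
\end{equation*}

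The steps, in order, are as follows.

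(1) \emph{Pointwise convergence.} For each fixed $\theta$, Doob's inequality gives $M_N(\theta)\to 0$ in $L^2$. Propagation of chaos (the convergence in law of the empirical measure $\mu^{N,\ts}$ towards $\bar\mu^{\ts}$, which holds by exchangeability) gives $K_N(\theta)\to K(\theta)$ in probability. The integrand $x\mapsto f_\ts g(f_\theta/f_\ts)(x)$ is bounded and Lipschitz, so $\mu^{N,\ts}_s[\cdot]\to\bar\mu^{\ts}_s[\cdot]$ for a.e.\ $s$ (at continuity times of the limit). The time integral then follows by dominated convergence.

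(2) \emph{Uniformity over the compact $\Theta$.} By Assumption~\ref{ass:dif}, $\|\dot f_\theta/f_\theta\|\le B_1$, so $\theta\mapsto \log f_\theta(x)$ is $B_1$-Lipschitz uniformly in $x$. Hence
\begin{equation*}
	|\Lambda_N(\theta)-\Lambda_N(\theta')| \le \|\theta-\theta'\|\Big(\frac{B_1}{N}\sum_i N^i_t + t B_0 B_1\Big).
\end{equation*}
Since $N^{-1}\sum_i N^i_t$ is bounded in $L^1$ (the intensity is at most $B_0$), this is a stochastic equicontinuity bound. $K$ is Lipschitz as well. Combining this with a finite $\delta$-net of $\Theta$ upgrades the pointwise convergence to $\sup_\theta|\Lambda_N(\theta)-K(\theta)|\to0$ in probability.

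(3) \emph{Identifiability.} We have $K(\ts)=0$ and $K\le 0$, with $g(r)=0$ iff $r=1$. Since $f_\ts\ge \ell_0^{-1}>0$ and $f_\theta/f_\ts$ ranges in a compact subset of $(0,\infty)$, there is $c>0$ with $g(r)\ge c\,|r-1|^2$ there. Hence $-K(\theta) \ge c'\int_0^t\bar\mu^\ts_s[|f_\theta/f_\ts-1|^2]\Rd s$. This is positive for $\theta\ne\ts$ by Assumption~\ref{ass_non_deg} and Remark~\ref{rem_non_deg}: the integrand is positive on a set of times of positive measure. By continuity of $K$ on the compact set $\{\|\theta-\ts\|\ge\eps\}\cap\Theta$, we get $\sup_{\|\theta-\ts\|\ge\eps}K(\theta)=-\eta<0$.

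(4) \emph{Conclusion.} Since $\Lambda_N(\hat\theta^N_t)\ge\Lambda_N(\ts)=0$, we have
\begin{equation*}
	\{\|\hat\theta^N_t-\ts\|\ge\eps\}\subset\Big\{\sup_\theta|\Lambda_N-K|\ge\eta\Big\},
\end{equation*}
whose probability tends to $0$ by step (2).

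The main obstacle is step (1)/(2): transferring propagation of chaos, which is convergence in law on Skorokhod space, to the convergence in probability of the time-integrated empirical functionals. The first approach I would try is to use that the limit $\bar\mu^\ts$ is deterministic. Convergence in law of $\mu^{N,\ts}$ to a deterministic limit is then convergence in probability, and the functional $\mu\mapsto\int_0^t\mu_s[h]\Rd s$ is continuous at $\bar\mu^\ts$ for bounded Lipschitz $h$, because the limit has at most countably many fixed-time discontinuities. Failing that, I would bound $\bE|\mu^{N,\ts}_s[h]-\bar\mu^\ts_s[h]|$ directly via the coupling of \cite{andreis_mckeanvlasov_2018}. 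In the reset case I would invoke Remark~\ref{re:reset}.
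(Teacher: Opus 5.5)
Your proposal is correct and follows essentially the same route as the paper. That route is: decompose $\frac1N\log L_t^{\theta/\ts}$ into compensator plus martingale; get the pointwise limit from the propagation-of-chaos continuity argument (Proposition~\ref{prop:conv}), with the bracket bound making the martingale term negligible; obtain uniformity from a Lipschitz-in-$\theta$ bound driven by the total spike count together with a finite net of $\Theta$; and deduce identifiability from $\log r - r + 1\le 0$ combined with Assumption~\ref{ass_non_deg}. The differences are cosmetic: you control the spike count by Markov's inequality instead of the paper's law-of-large-numbers event $\mathcal J^N$, and your quadratic lower bound on $g$ is not needed.
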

This proposition is proved in Section \ref{sec:consistency}. 

We next show that MLE is asymptotically normal, and under LAN we show its  efficiency and local minimax optimality.
In order to formulate these results we need 
to require an additional  regularity condition on $\theta\mapsto f_\theta$.

\begin{ass}\label{ass_ddf}
	For all $x\in\mathbb{R}$, the function $\theta \mapsto f_{\theta}(x)$ is twice continuously differentiable, and the Hessian matrix, 
	denoted by $\ddot f_{\theta} (x)\in \mathcal{M}^d$,  satisfies a uniform bound:
	$\sup_{\theta \in \Theta}\sup_{x\in \bR} \|\ddot f_{\theta} (x)\|< \infty$.
\end{ass}

\begin{theorem}\label{thm_minimax}
	Under Assumptions~\ref{ass:theta} to
	\ref{ass_ddf},
	\begin{enumerate}[(i)]
		\item \textbf{Asymptotic normality:} For any $\ts \in \mathring{\Theta}$, under $\Pr^{\ts}_N,$ the following convergence in law holds
		\begin{equation*}
			\sqrt{N}\big(\hat\theta^N_t - \ts\big) \stackrel{{\mathcal L}}{\longrightarrow} \mathcal N\big(0, (I_t^{\theta^*})^{-1}\big)\,.
		\end{equation*}
		\item  \textbf{Local asymptotic minimax:} The following local asymptotic minimax optimality holds, for any $C>0$ and any loss functions $w$ that are continuous, bounded and subconvex
		\begin{equation*}
			\lim_{N \to \infty}\sup_{\sqrt{N}|\theta'-\ts| \leq C}\bE_{\Pr^{\theta'}_N}\Big[w\Big(N^{1/2}(I_t^{\theta^*})^{1/2}\big(\hat\theta^N_t - \theta'\big)\Big)\Big] 
			= \frac{1}{(\sqrt{2\pi})^d}\int_{\bR^d} w(x)\,e^{-\frac{1}{2}\|x\|^2}\Rd x\,.
		\end{equation*}
	\end{enumerate}
\end{theorem}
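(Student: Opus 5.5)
The plan is Hopfner-style: derive (i) from the consistency result together with an expansion of the score, then (ii) from (i) plus LAN and Le Cam's third lemma.

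\textbf{Score and Taylor expansion.} Define the normalized score
\[
\Delta_t^{N,\theta}=N^{-1/2}\sum_{i}\Big(\int_0^t\frac{\dot f_\theta}{f_\theta}(X^{(i,N)}_{s-})\,\Rd N^i_s-\int_0^t\dot f_\theta(X^{(i,N)}_s)\Rd s\Big),
\]
which is $N^{-1/2}$ times the gradient of $\log L_t^{\theta/\ts}$ from \eqref{eq_def_L}. Under $\Pr^{\ts}_N$ it is a sum of purely discontinuous martingales. Since $\ts\in\mathring\Theta$ and Proposition~\ref{prop_cons} gives consistency, with probability tending to one $\hat\theta^N_t$ is interior and the score vanishes there. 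A Taylor expansion then gives
\[
0=\Delta_t^{N,\ts}-J^N\sqrt N(\hat\theta^N_t-\ts),
\]
where $J^N$ is the integral along the segment $[\ts,\hat\theta^N_t]$ of minus the normalized Hessian $-N^{-1}\nabla^2\log L$.

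\textbf{Uniform control of the Hessian.} The key step is to show $\sup_{\|\theta-\ts\|\le\delta_N}\|J^N(\theta)-I_t^{\ts}\|\to0$ in probability for any $\delta_N\to0$. Write
\[
-N^{-1}\nabla^2\log L=N^{-1}\sum_i\Big(\int\frac{\dot f_\theta^{\otimes2}}{f_\theta^2}\Rd N^i-\int\frac{\ddot f_\theta}{f_\theta}\Rd N^i+\int\ddot f_\theta\,\Rd s\Big).
\]
Replace $\Rd N^i$ by its compensator $f_{\ts}(X^{(i,N)}_s)\Rd s$. The resulting martingale error is $O_P(N^{-1/2})$ uniformly in $\theta$, because by Assumptions~\ref{ass:dif} and~\ref{ass_ddf} the integrands are bounded and continuous in $\theta$, and one can use a finite grid plus a modulus bound (or a Lipschitz-in-$\theta$ argument). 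Evaluated at $\theta=\ts$, the $\ddot f$ terms cancel, leaving
\[
\int_0^t\mu^{N,\ts}_s\big[\dot f_{\ts}^{\otimes2}/f_{\ts}\big]\Rd s .
\]
This converges to $I_t^{\ts}$ by propagation of chaos: the empirical measure converges to $\bar\mu^{\ts}_s$ and the integrand is bounded. For the differentiability of the integrand in $\theta$ one needs continuity of $\dot f,\ddot f$ in $\theta$ uniformly in $x$. Only continuity is given, so I would try equicontinuity on the compact $\Theta$ combined with tightness of $\mu^N$. I expect this uniformity to be the main technical obstacle.

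\textbf{Conclusion of (i).} Invertibility then follows from Assumption~\ref{as:fischer}. Theorem~\ref{th:LAN} gives $\Delta_t^{N,\ts}\to\mathcal N(0,I_t^{\ts})$, and indeed the LAN central sequence is this same score. Hence
\[
\sqrt N(\hat\theta^N_t-\ts)=(I_t^{\ts})^{-1}\Delta_t^{N,\ts}+o_P(1)\to\mathcal N\big(0,(I_t^{\ts})^{-1}\big).
\]

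\textbf{Proof of (ii).} Take local parameters $\theta_N=\ts+h_N/\sqrt N$ with $|h_N|\le C$. By LAN the measures $\Pr^{\theta_N}_N$ and $\Pr^{\ts}_N$ are mutually contiguous, and $(\Delta_t^{N,\ts},\log L)$ are jointly asymptotically Gaussian. Le Cam's third lemma then shows that under $\Pr^{\theta_N}_N$,
\[
\sqrt N(\hat\theta^N_t-\theta_N)=(I_t^{\ts})^{-1}\Delta_t^{N,\ts}-h_N+o_P(1)\to\mathcal N\big(0,(I_t^{\ts})^{-1}\big).
\]
To make this uniform over $|h|\le C$, argue by contradiction along subsequences $h_N\to h$; LAN at $\ts$ holds with $h_N$ convergent, since the $o_P(1)$ remainder in \eqref{eq:LAN} is uniform on compacts by the quadratic bound in Assumption~\ref{ass:deriv}. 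Because $w$ is bounded and continuous, the expectations converge to the Gaussian integral, which gives the stated limit. The lower bound side is Corollary~\ref{corol:hajek}.
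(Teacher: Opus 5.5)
Your plan has the same architecture as the paper's proof of (i). You Taylor-expand the score between $\ts$ and $\thN$ on the event (of probability tending to one by Proposition~\ref{prop_cons}) that $\thN$ is interior, identify $N^{-1/2}\dot\ell^N[\ts]=\Delta^{N,\ts}_t$, show that the averaged normalized Hessian $\wht I^{N,\ts}_t$ converges to $I^{\ts}_t$, and conclude by continuous mapping.

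For (ii) the paper does not run Le Cam's third lemma itself. It notes that the expansion gives H\"opfner's coupling property $\sqrt N(\thN-\ts)=(\wht I^{N,\ts}_t)^{-1}\Delta^{N,\ts}_t$ with probability tending to one, and that LAN holds with $\wht I^{N,\ts}_t$ in place of $I^{N,\ts}_t$. It then cites \cite[Theorem 7.12(ii)]{Ho14}. Your contiguity/third-lemma/subsequence argument is a correct self-contained replacement for that citation. The LAN remainder is indeed $O(\|h\|^3/\sqrt N)$ plus a martingale with quadratic variation $O(\|h\|^4/N)$, uniformly on $\{\|h\|\le C\}$. You also do not need Corollary~\ref{corol:hajek}, since you obtain the limit along every sequence $h_N$.

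The Hessian step is where you deviate, and as written it is not closed. You do not need a martingale bound uniform in $\theta$. The paper uses a martingale only at the fixed point: $N^{-1}\ddot\ell^N[\ts]=-I^{N,\ts}_t+\wht M^{N,\ts}_t$ with $\bE\|\wht M^{N,\ts}_t\|_F^2=O(1/N)$. It then compares $\ddot\ell^N[\theta]$ with $\ddot\ell^N[\ts]$ pathwise on $\mathcal J^N=\{\bar J^N_t\le 2tB_0N\}$, where the spike count is dominated by $N$ independent Poisson$(tB_0)$ variables. On that event, $N^{-1}\|\ddot\ell^N[\theta]-\ddot\ell^N[\ts]\|$ is bounded by a deterministic quantity built from the sup-in-$x$ moduli of continuity of $\theta\mapsto f_\theta,\dot f_\theta,\ddot f_\theta$, and consistency finishes the argument.

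On the obstacle you flag: $f_\theta$ and $\dot f_\theta$ are Lipschitz in $\theta$ uniformly in $x$ on a ball around $\ts$, by the bounds in Assumptions~\ref{ass:dif} and~\ref{ass_ddf}. For $\ddot f_\theta$, however, the paper simply invokes sup-norm uniform continuity, which is not literally contained in Assumption~\ref{ass_ddf}. Your equicontinuity-plus-tightness remedy would not produce it. Continuity in $\theta$ for each fixed $x$ gives no modulus uniform in $x$, even on compacts: $f_\theta(x)=2+x^2\sin(\theta/x)$ near $x=0$ has $\ddot f_\theta(x)=-\sin(\theta/x)$. Tightness only reduces you to compacts.

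You can either assume that uniform continuity explicitly, as the paper tacitly does, or bypass the Hessian. After compensation,
\[
\dot\ell^N[\theta]-\dot\ell^N[\ts]=\iint_{[0,t]\times\bR}\Big(\frac{\dot f_\theta}{f_\theta}-\frac{\dot f_{\ts}}{f_{\ts}}\Big)(x)\,\wtd J^{N,\ts}(\Rd s,\Rd x)-N\,I^{N,\ts}_t(\theta-\ts)+O\big(N\|\theta-\ts\|^2\big),
\]
where the $O(\cdot)$ is deterministic by Assumption~\ref{ass:deriv} and the bounds on $\dot f,\ddot f$. By Burkholder--Davis--Gundy, the martingale term has $L^p$ increments of order $\sqrt N\|\theta-\theta'\|$. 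Kolmogorov's criterion with $p>d$ plus a peeling argument then makes it $o_{\Pr^{\ts}_N}(\sqrt N+N\|\theta-\ts\|)$ uniformly near $\ts$. This yields $\sqrt N(\thN-\ts)=(I^{N,\ts}_t)^{-1}\Delta^{N,\ts}_t+o_{\Pr^{\ts}_N}(1)$ without any continuity of $\ddot f_\theta$ beyond its boundedness.
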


The last Theorem \ref{thm_optimality} is inspired by the paper \cite{dMH23} and their approach based on the results of \cite[Section III.1]{IH13}. In this Theorem  we consider a simplified model \eqref{eq:finite}, in the absence of large jumps, i.e. with $\varphi\equiv 0$
\begin{equation*}
	\Rd X_t^{i, N,\theta } = b ( X_t^{i, N,\theta} ) \Rd t + \frac{1}{ {N} } \sum_{ j \neq i }^N \int_{\bR_+} \int_{\bR} u \idc{ z \le f_{\theta} ( X_{s-}^{j, N,\theta } ) }  \pi^j (\Rd t, \Rd z, \Rd u );
	\; \quad X_0^{i, N,\theta }{=  X_0^{i}}\;
\end{equation*}
and its large population limit
\begin{equation*}
	\Rd \bar X_t^{ i, \theta } = b ( \bar X_t^{i,\theta} ) \Rd t 
	+ m\,\bar \mu^{\theta}_t[f_{\theta}] \Rd t;
	\; \quad \bar X_0^{i, \theta }{=  X_0^{i}}\;.
\end{equation*}

This approach is based on the following polynomial deviation inequality
in the Wasserstein 1 distance between the empirical measure $\mu^{N,\ts}$ 
of the finite system \eqref{eq:finite} and the law $\bar \mu^{\ts}$  of the limit system, see Lemma \ref{lem_moment_ctrl}, which hold for any $r>2$, for some $C>0$ that only depends on $r$  and on  the upper-bound on polynomial moments of $\bar \mu^{\ts}_{s}$ (up to $2r$)
\begin{equation*}
	\sup_{s\in [0, t]}
	\bE_{\Pr_N^{\ts}}[\mathcal W_1(\mu^{N, \ts}_s, \bar \mu^ {\ts}_s)^r]
	\le \frac{C}{N^{r/2}}.
\end{equation*}
For this result to hold, we need the following assumption.
\begin{ass}\label{ass_any_moment}
	\begin{enumerate}[(i)]
		\item For all $r\geq 1,$  $\bE[|X_0^1|^r]<\infty$.
		\item  The measure $\nu$ satisfies: $ \int_\bR |u|^r \nu (du ) < \infty  $ for all $r \geq 1 .$ 
	\end{enumerate}
\end{ass}
The idea behind  is to exploit a diagonal coupling with the same Poisson measure for the $j$-th particle
and control together $|X^{i, N, \ts}_s - \bar X^{i, N, \ts}_s|^r$ and $\mathcal W_1(\mu^{N, \ts}_s, \bar \mu_s^{N, \ts})^r$
by means of the Gr\"onwall Lemma. More precisely, 
the Wasserstein norm appears as the natural quantity to bound the
predictible part of the interaction term,
together with a correction term which we handle with concentration inequalities taken from \cite{FG15}, 
while the martingale terms are controlled
by the Burckholder-Davis-Gundy inequality.	
This last control fails to produce satisfying bounds
if $\varphi \neq 0$,
due to the dependency of the jump rate 
on the state of the particle.
We conjecture that such a strong result actually does not hold 
in this case where  $\varphi \neq 0$.

\begin{theorem}[Convergence and optimality of the MLE without large jumps]\label{thm_optimality}\hfill
	
	Under Assumptions~\ref{ass:theta} to 
	\ref{ass_non_deg} and \ref{ass_any_moment}, 
	the following asymptotic properties hold:
	\begin{enumerate}[(i)]
		
		\item \textbf{Local asymptotic minimax optimality:} For every polynomial loss function $w$, and any $\delta > 0$ sufficiently small,
		\begin{equation*}
			\limsup_{N \to \infty}\sup_{|\theta-\ts| \leq \delta}\bE_{\Pr^{\theta}}\Big[w\Big(N^{1/2}(I_t^{\theta^*})^{1/2}\big(\hat\theta^N_t - \theta\big)\Big)\Big] 
			\to \frac{1}{(\sqrt{2\pi})^d}\int_{\bR^d} w(x)\,e^{-\frac{1}{2}\|x\|^2}\Rd x
		\end{equation*}
		as $\delta \to 0$.
		
		\item \textbf{Global asymptotic minimax optimality:} For any non-empty open set $\Theta_0 \subset \mathring{\Theta}$ and every polynomial loss function $w$,
		\begin{equation*}
			\mathcal R_w^N(\hat\theta^N_t; \Theta_0) = \inf_{\tilde\theta_N} \mathcal R_w^N(\tilde\theta_N; \Theta_0)\,(1+o(1)),\quad\text{as } N \to \infty\,,
		\end{equation*}
		where the risk is defined
		as
		\begin{equation*}
			\mathcal R_w^N(\tilde\theta_N; \Theta_0) = \sup_{\theta \in \Theta_0}\bE_{\Pr^{\theta}}\Big[w\Big(N^{1/2}(I_t^{\theta})^{1/2}\big(\tilde\theta_N - \theta\big)\Big)\Big]\,.
		\end{equation*}
	\end{enumerate}
\end{theorem}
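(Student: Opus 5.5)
The plan is to apply the Ibragimov--Has'minskii program \cite[Section III.1]{IH13}, as in \cite{dMH23}, to the normalized likelihood ratio field
$$Z_N^{\theta}(u)=L_t^{(\theta+u/\sqrt N)/\theta},\qquad u\in U_N^\theta=\sqrt N(\Theta-\theta),$$
uniformly in $\theta$ ranging over a compact $K\subset\mathring\Theta$ (a small ball around $\ts$ for (i), or $\overline{\Theta_0}$ shrunk slightly for (ii)). Both conclusions follow from the IH theorems on Bayes and maximum likelihood estimators once three things are established. First, uniform LAN: the expansion of Theorem~\ref{th:LAN} holds uniformly in $\theta\in K$ and $u$ in compacts, with $I_t^\theta$ continuous in $\theta$. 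Second, a Hölder-type bound on the square-root field, $\bE_{\Pr^\theta_N}|Z_N^{1/2}(u)-Z_N^{1/2}(v)|^2\le C(1+R^m)|u-v|^2$ for $|u|,|v|\le R$. Third, polynomial tail decay, $\bE_{\Pr^\theta_N}Z_N^{1/2}(u)\le C_p|u|^{-p}$ for every $p$. Uniform LAN is obtained by rerunning the proof of Theorem~\ref{th:LAN} with $\theta$ replacing $\ts$ and checking that the constants depend only on $B_0,B_1,B_L,\ell_0$ and on the constant of Assumption~\ref{ass:deriv}. Continuity of $\theta\mapsto I_t^\theta$ follows from continuity of $\theta\mapsto\bar\mu^\theta_s$ in $\mathcal W_1$ (a Grönwall argument on \eqref{eq_limit} with $\varphi\equiv0$, which is deterministic given $X_0$) together with continuity of $\dot f_\theta$.

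For the Hölder bound, write $\theta_u=\theta+u/\sqrt N$ and use the standard point-process Hellinger estimate. By Girsanov, $Z^{1/2}(u)/Z^{1/2}(v)$ is the density relating $\Pr^{\theta_u}$ and $\Pr^{\theta_v}$, so
$$\bE|Z^{1/2}(u)-Z^{1/2}(v)|^2\le \sum_i\bE_{\theta_v}\int_0^t\big(\sqrt{f_{\theta_u}}-\sqrt{f_{\theta_v}}\big)^2(X^i_s)\,\Rd s\le C\,N\,|u-v|^2/N .$$
This uses the uniform bounds $B_1$ and $\ell_0$ together with a change-of-measure control of $\bE_{\theta_v}$ that is uniform in $\theta_v$. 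No moment growth in $R$ is needed.

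The third step is the main obstacle. Bound $\bE Z^{1/2}(u)=\bE\exp\big(-\tfrac12\sum_i\int_0^t(\sqrt{f_{\theta_u}}-\sqrt{f_\theta})^2(X^i_s)\Rd s\big)$, up to a Hölder-conjugate exponent obtained through an exponential-martingale argument. It then suffices to show that the quantity
$$H_N(u)=\frac1N\sum_i\int_0^t\big(\sqrt{f_{\theta_u}}-\sqrt{f_\theta}\big)^2(X^i_s)\,\Rd s=\int_0^t\mu^{N}_s\big[(\sqrt{f_{\theta_u}}-\sqrt{f_\theta})^2\big]\Rd s$$
is at least $c\min(|u|^2/N,1)$ except on an event of probability $O(|u|^{-p})$. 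On the deterministic side, $g(\theta,\theta')=\int_0^t\bar\mu^\theta_s[(\sqrt{f_{\theta'}}-\sqrt f_\theta)^2]\Rd s$ satisfies $g\ge c|\theta'-\theta|^2$. Near the diagonal this comes from Assumption~\ref{as:fischer} via Remark~\ref{rem:unif-nondegen} and a Taylor expansion. Away from the diagonal it comes from Assumption~\ref{ass_non_deg}, continuity, and compactness. On the random side, the integrand is Lipschitz in $x$ with constant $\lesssim\min(|u|/\sqrt N,1)$ by $B_1$ and $B_L$, so that
$$|H_N(u)-g(\theta,\theta_u)|\lesssim \min(|u|/\sqrt N,1)\int_0^t\mathcal W_1(\mu^N_s,\bar\mu^\theta_s)\,\Rd s .$$
By Markov's inequality with the polynomial deviation bound of Lemma~\ref{lem_moment_ctrl} (valid for every $r$ under Assumption~\ref{ass_any_moment}), the probability that the error exceeds $g/2$ is $\lesssim (\sqrt N/|u|)^r N^{-r/2}=|u|^{-r}$ when $|u|\le\sqrt N$. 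The case $|u|\gtrsim\sqrt N$ is handled similarly, with $g$ bounded below by a constant.

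The delicate point is justifying the Hölder/exponential step that passes from $\bE Z^{1/2}$ to the compensator, since the measure is $\Pr^\theta$ while the exponential involves both intensities. Lemma~\ref{lem_moment_ctrl} is needed under $\Pr^\theta$ uniformly for $\theta\in K$; I expect its constants to depend only on the uniform moment bounds, which hold uniformly in $\theta$. With the three conditions in hand, \cite[Thms.~III.1.1, III.1.3]{IH13} give uniform convergence of the law of $\sqrt N(\hat\theta-\theta)$ with convergence of all polynomial moments, which yields (i). Combined with the Hájek--Le Cam lower bound in its global form \cite[Thm.~II.12.1]{IH13}, applied to the uniform LAN family, this yields (ii).
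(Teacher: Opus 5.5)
\textbf{There is a gap in your second step when $d\ge 2$.} Your overall architecture matches the paper's: uniform LAN, a Hölder-type bound on the likelihood field, polynomial decay of $\bE Z^{1/2}$, then Theorem III.1.1 and its corollaries in \cite{IH13}. Your tail-decay step is also essentially the paper's Lemma~\ref{lem_mom_lklh}: an exponential supermartingale plus Hölder, then deviation probabilities from Lemma~\ref{lem_moment_ctrl}. Comparing the integrand with its mean-field counterpart through its Lipschitz constant $\lesssim |u|/\sqrt N$ is, if anything, more economical than routing through $I^{N,\ts}_t$. The problem is the Hölder bound.

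The Ibragimov--Has'minskii theorem for the \emph{maximum likelihood} estimator does not accept $\bE|Z^{1/2}(u)-Z^{1/2}(v)|^2\le C(1+R^a)|u-v|^2$ when $d\ge 2$. That theorem (III.1.1, resting on their general MLE theorem in Chapter I) requires
$\bE|Z^{1/m}(u)-Z^{1/m}(v)|^m\le B(1+R^a)|u-v|^{\alpha}$ with $m\ge\alpha>d$. This is a Kolmogorov-type continuity condition. It is used to control $\sup_u Z_N(u)$ over cubes and to get tightness of $u\mapsto Z_N(u)$ in $C_0(\bR^d)$, which is what makes the argmax converge. The square-root version with $\alpha=2$ suffices only for $d=1$, or for Bayes estimators, which only involve integrals of $Z$.

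Your estimate cannot be upgraded to fill this. For $m\ge 2$ one has $|a^{1/m}-b^{1/m}|^m\le |a^{1/2}-b^{1/2}|^2$ by superadditivity of $s\mapsto s^{m/2}$. So a Hellinger bound only ever gives exponent $\alpha=2$ in $|u-v|$, never $\alpha>d$. This is why the paper's Lemma~\ref{lem_reg_lklh} is stated for roots of order $r\ge d$.

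\textbf{What is missing is a first-order argument, not a Hellinger computation.} You need, for some $r>d$, $\bE|Z^{1/r}(u)-Z^{1/r}(v)|^r\le C|u-v|^r$ uniformly.
\begin{enumerate}[(a)]
\item Write $Z^{1/r}(u)-Z^{1/r}(v)=\big(\int_0^1\nabla Z^{1/r}(u_\xi)\,\Rd\xi\big)\cdot(u-v)$ and apply Jensen.
\item Observe that $\|\nabla Z^{1/r}(u_\xi)\|^r=Z(u_\xi)\,(r\sqrt N)^{-r}\,\|\partial_\theta \ell^N[\theta_\xi]\|^r$.
\item The factor $Z(u_\xi)$ is exactly the density turning the base measure into $\Pr^{\theta_\xi}_N$. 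The bound therefore reduces to $r^{-r}\,\bE_{\Pr^{\theta_\xi}_N}\|\Delta^{N,\theta_\xi}_t\|^r$, the $r$-th moment of the normalized score.
\item That score is a martingale under $\Pr^{\theta_\xi}_N$. It is bounded uniformly in $N$ and $\theta_\xi$ by Burkholder--Davis--Gundy, since $\|\dot f_\theta/f_\theta\|\le B_1$ and each neuron's jump count on $[0,t]$ is dominated by a Poisson$(tB_0)$ variable.
\end{enumerate}
With this replacing your square-root bound and $r$ chosen larger than $d$, the rest of your argument goes through as in the paper.
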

\begin{remark}
	The minimax bound in Theorem~\ref{thm_minimax}
	holds for a narrower constraint on $\theta'$
	and a stronger bound on $w$ as compared to the one in
	Theorem~\ref{thm_optimality}.(ii),
	in line with the conditions in \cite{Ho14} which we draw upon in the present subsection. 
\end{remark}

\begin{remark}
	Under the LAN property, we know from \cite[Section II.12.1]{IH13},
	originally due to H\'ajek 
	\cite{Ha72}, 
	that the r.h.s. of $(i)\; $
	is actually the minimax lower-bound corresponding to the loss function $w$ among any family of estimators.
\end{remark}

\section{Numerical illustration of LAN and MLE behavior}
\label{sec_numerical_illustration}

\subsection{Simulation setup}
\label{sec_simulation_setup}

Following Remark~\ref{re:reset}, Equation~\ref{eq:finite}
has been considered with $b(x) = -\alpha x$
corresponding to an exponential leakage at rate $\alpha$,
$\phi(x) = -x$ corresponding to the reset of the neuron state at spiking time,
$\nu$ the uniform distribution over some set $[\nu_1 -\nu_2, \nu_1 +\nu_2]$ and the following sigmoid function
for the spiking rate:

\begin{minipage}{0.5\textwidth}
	\begin{equation*}
		f_\theta(x) = \frac{\theta_0}{1 + \exp(-\theta_2 \cdot (x - \theta_1))}
		;\quad \theta = (\theta_0, \theta_1, \theta_2),
	\end{equation*}
	\vspace{.3cm}
	
	where $\theta_0$ corresponds to the maximum intensity, 
	$\theta_1$ to the threshold value of potential,
	where $f_\theta$ has its inflexion point,
	and $\theta_2$ to the roughness of the transition of $f_\theta$ at $\theta_1$.
\end{minipage}
\hspace{0.03\textwidth}
\begin{minipage}{0.45\textwidth}
	
	\centering
	\includegraphics[width=0.8\textwidth]{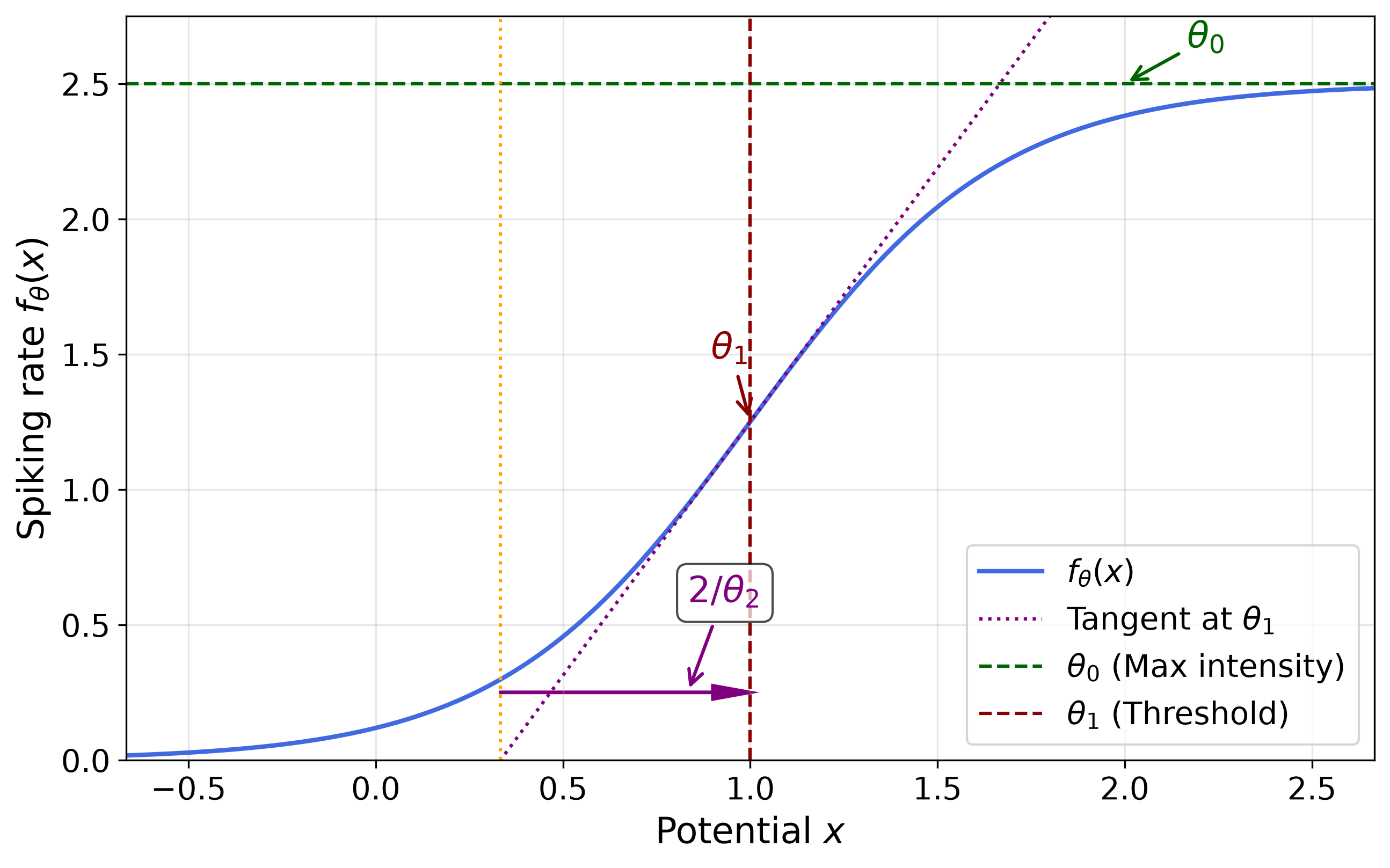}
	
	\captionof{figure}[width=\textwidth]{Spiking rate}
\end{minipage}

The parameter values chosen as reference are given in Table~\ref{tab_model_parameters_v16} for the model parameters and Table~\ref{tab_simulation_settings_v16} 
for the context of observation.
A persisting neuronal activity on the interval $[0, T]$ was observed, as planned, for these parameters.
Though one could discern a form of stabilization towards stationnarity,
the internal randomness of the system for this neuron number 
makes each realization unique in terms of the global activity, 
as detailed in Section~\ref{sec_visual_evidence}.
The choice of parameters was also adapted so that the three parameters $\theta_0, \theta_1$ and $\theta_2$ could be inferred with a similar efficiency,
which we qualify thanks to the numerical evaluation of the Fisher information matrix $\hat I_t^{N, \ts}$. This is discussed in next Section~\ref{sec_mle_alignment}, while we evaluate the prediction derived from the LAN approximation.

\begin{table}[htbp]
	\begin{tabular}{|c|c|c||c|c|c||c|c|c|}
		\hline
		$\varphi(x)$& $b(x)$& $\nu(\Rd u)$ &
		$\alpha$ & $\nu_1$ & $\nu_2$ & $\theta_0$ & $\theta_1$ & $\theta_2$  \\
		\hline
		$-x$ & $-\alpha x$& $\mathcal{U}(\nu_1\!-\!\nu_2, \nu_1\!+\!\nu_2)$ &
		$1.5$ & $2.4$ & $ 0.8=\nu_1 / 3$  & $2.5$ & $1.0$ & $3.0$  \\
		\hline
	\end{tabular}
	\centering
	\caption{Reference model parameters used in the simulations.}
	\label{tab_model_parameters_v16}
\end{table}

\begin{table}[htbp]
	\begin{tabular}{|c|c|c|c|}
		\hline
		$N$&	$T$ & $\Delta t$ &  $X_0^i$, $i \in \{1, \dots, N\}$ \\
		\hline
		$50$&	$20.0$ s & $0.002$ s & $\mathcal{U}(2, 4)$  \\
		\hline
	\end{tabular}
	\centering
	\caption{Reference context of observation.}
	\label{tab_simulation_settings_v16}
\end{table}

The dynamics was implemented 
with a   Python implementation  customized
for the facts that the exponentional leakage is easy to implement
and that jumps occur scarcely for each neuron at each small time-step, but reasonably as a whole.
The algorithm combines the Lie-Trotter approach in that it alternates between the jumps and the drift,
and the Sellke construction \cite{Se83} for the identification of the neurons that jump at each step.
Besides the interest of keeping the random number generation 
to its bare minimum,
the construction moreover produces a natural ordering according to which we can make the neurons jump.
Each neuron is treated in order, 
first with the common random interaction effect, 
then with the reset of its potential.
More details are provided in Appendix~\ref{app:simulation_details}.

\subsection{Visual evidence of persistent stochasticity}
\label{sec_visual_evidence}

To demonstrate the strength of the proposed approximation,
we evaluate it in a context of observation for which stochasticity is visibly not entirely averaged out,
though the system approaches a stationary regime.
More regular scenarios are also displayed in Appendix Section~\ref{app_larger_nbr}, for $N= 100$ and $N=500$ neurons.

\subsubsection{Individual neuron trajectories}
\label{sec_trajectories}

The randomness of the system can be observed first
by looking at neuron trajectories, which we show in Figure~\ref{fig_neuron_traj} for 5 neuron trajectories with two replicates of the same system (one per subpanel).
Even in the limiting regime, randomness persists at the neuronal level due to the spike events, that trigger neuron potential reset. These events are easily identified under the proposed scenario and occur several times for each neuron.
Although the limiting drift presumably accounts for most of the dynamics between these events, shared fluctuations introduce significant deviations from the deterministic flow across all neurons.

\begin{figure}
	\begin{subfigure}[b]{0.48\textwidth}
		\centering
		\includegraphics[width=\textwidth]{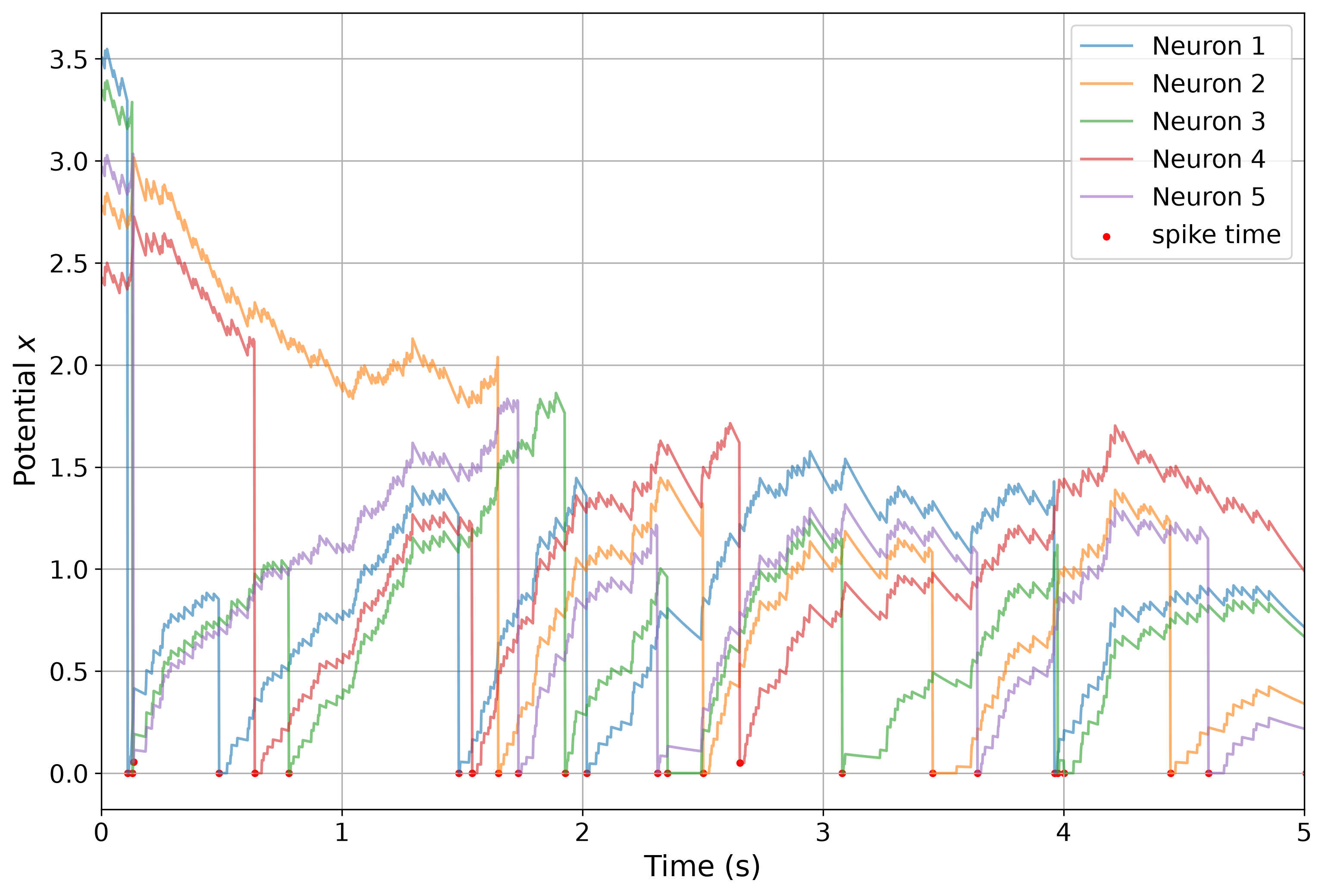}
	\end{subfigure}
	\hfill
	\vrule
	\hfill
	\begin{subfigure}[b]{0.48\textwidth}
		\centering
		\includegraphics[width=\textwidth]{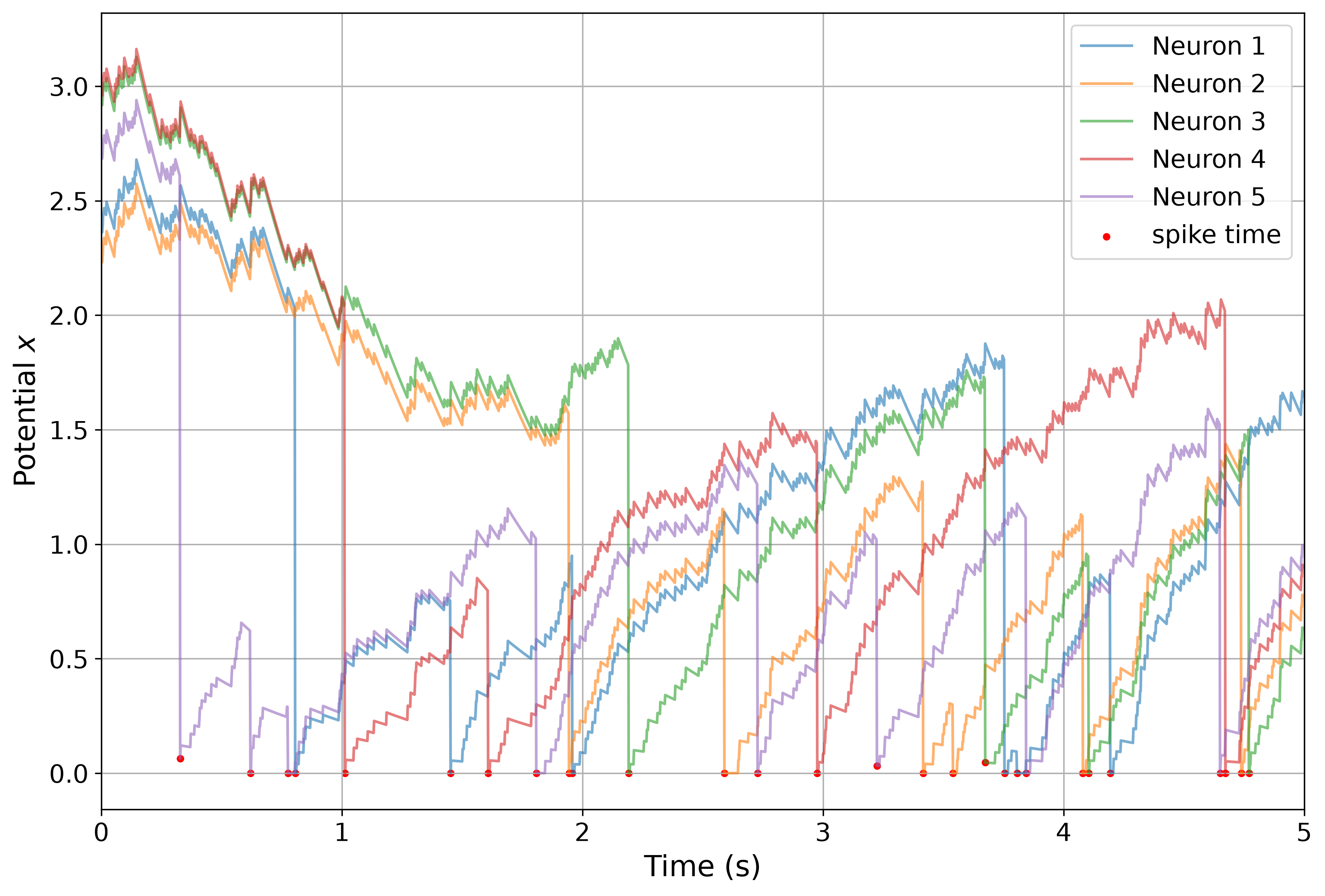}
	\end{subfigure}

	\caption{Illustration of 5 neuron state trajectories over the time-window $[0, 5]$, for two replicates.}
	\label{fig_neuron_traj}
\end{figure}

\subsubsection{Spike raster plot and global intensity}
\label{sec_spike_raster}

In Figure~\ref{fig_raster}, we additionally present an analysis of the global spiking intensity.
The spiking times for each neuron is displayed in subpanel $(A)$. We can observe globally that the spike events do not occur uniformly and that some time-periods appear to be reduced (between 5s and 7s) or on the contrary enriched (between 7.5s and 8s) in spike events.
We exploited a nonparametric gaussian kernel approach 
for estimating  the spiking intensity of the whole system.
The outcome is presented in blue in subpanel (B), 
and compared with the expected spike intensity (in red) that we can derive by computing the spiking rate $f_\theta$ 
over the empirical distribution of neuron potentials.
The blue curve appears to confirm the visual observation of fluctuations, though it is less confirmed than with the red curve. The red curve shows that the depletion is expected from the empirical distribution of neuron states.
While the  activity might have been larger than this expectation, the kernel bandwidth,  adjusted for spike event randomness, 
may be the main reason for the smaller decrease of the blue curve.

\begin{figure}
	\centering
	\begin{subfigure}[b]{0.48\textwidth}
		\centering
		\includegraphics[width=\textwidth]{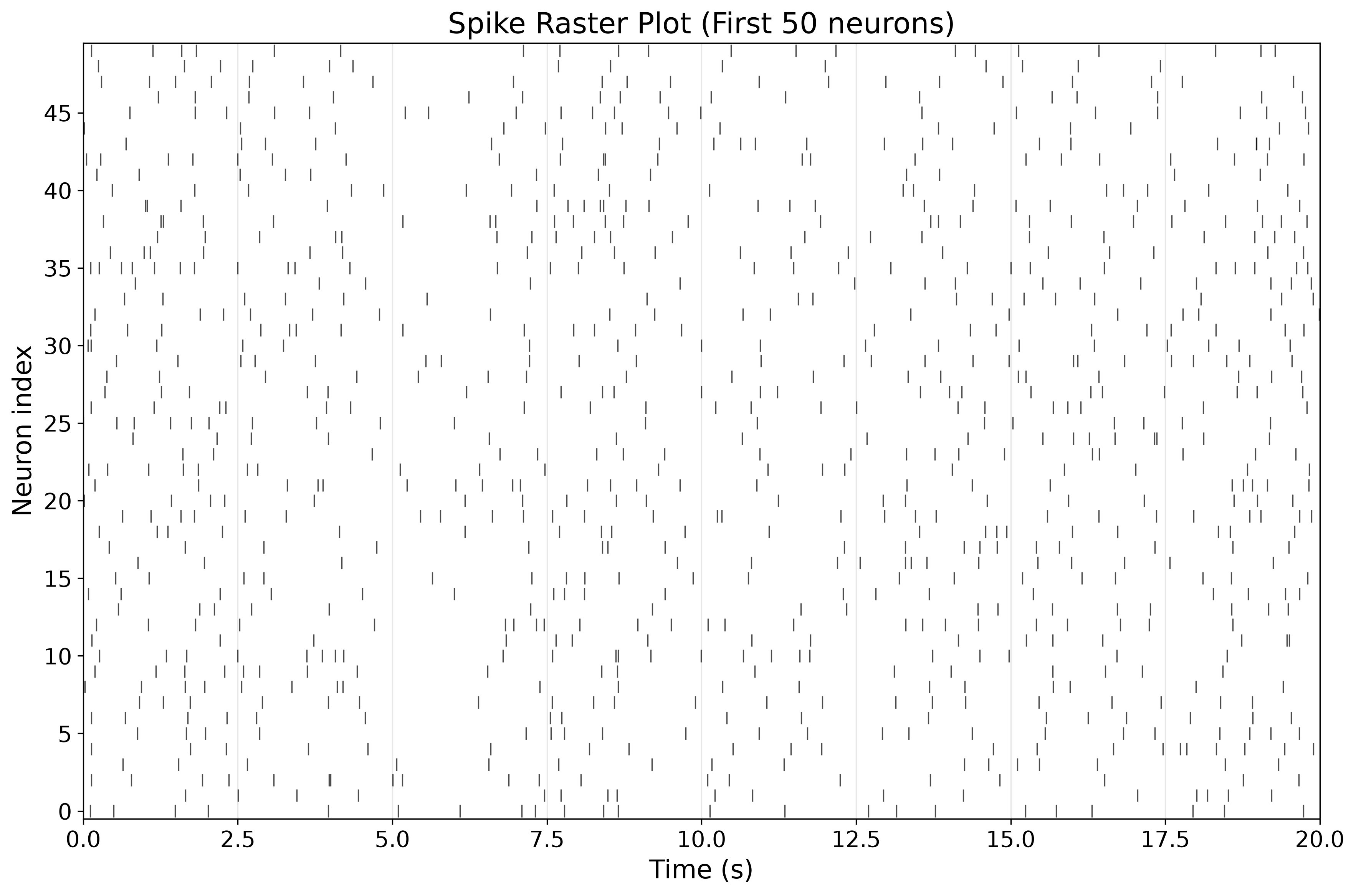}
		\caption{Raster plot of the spike trains for all 50 neurons. Time variations of the spike intensity can be identified.}
		
	\end{subfigure}
	\hfill
	\vrule
	\hfill
	\begin{subfigure}[b]{0.48\textwidth}
		\centering
		\includegraphics[width=\textwidth]{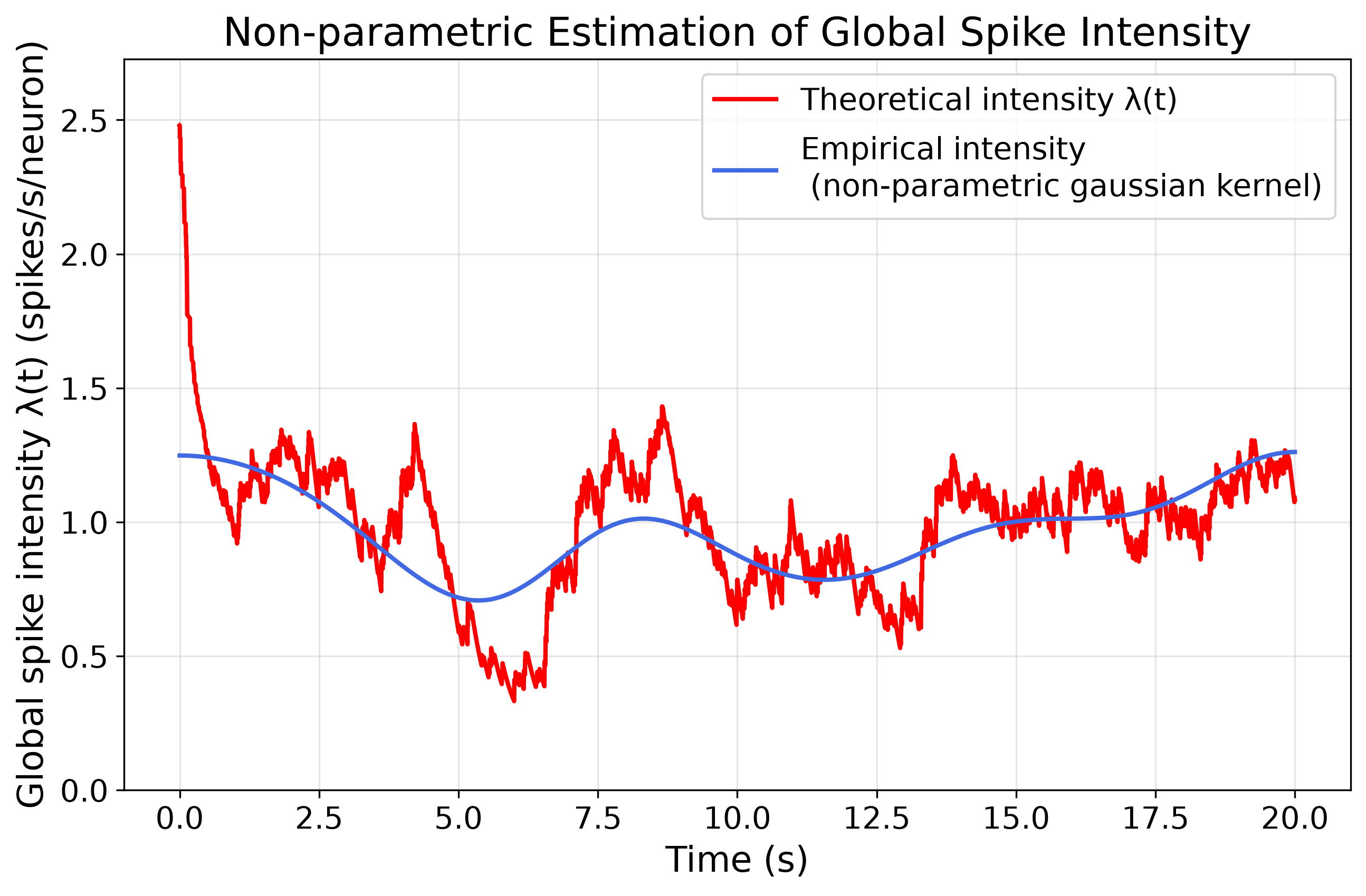}
		\caption{Estimation of the global spiking intensity, in blue with a nonparametric gaussian kernel intensity derived from the spike times,
			in red with the computation of the spiking rate over the neuronal states (for comparison).
		}
	\end{subfigure}
	\caption{Study of the spiking times.}
	\label{fig_raster}
\end{figure}

\subsection{Statistical validation of LAN quantities}
\label{sec_lan_validation}

To evaluate the LAN property as stated in Theorem~\ref{th:LAN}, we focus on the empirical behavior of the random sequence $\Delta_t^{N,\theta^*}$ and the empirical Fisher information matrix $I_t^{N,\theta^*}$, given respectively by Equations~\ref{eq_def_MN1} and \ref{eq_def_INts}, across the $100$ replicates. The LAN property predicts that $\Delta_t^{N,\theta^*}$ should converge in law to a Gaussian distribution $\mathcal{N}(0, I_t^{\theta^*})$, with the empirical Fisher information $I_t^{N,\theta^*}$ converging to its theoretical counterpart $I_t^{\theta^*}$,
defined in Equation~\ref{eq_def_I}.

Because it appears of practical interest to characterize
the variance of the MLE in terms of an estimator from the observed data, we will make a discrete computation of 
$\wht I_t^N = I_t^{N,\hat\theta_t^N}$
as the estimated Fisher information.
Since we want to select only one value $\wht I_t^N$ among the set of replicates, 
the chosen one  corresponds to a typical replicate.  The typical replicate is set as the one for which the relative error in Mahalanobis distance
is exactly at the median value, see Figure~\ref{fig_LAN_MLE}-(B) explanations.

The difference in Frobenius norm as compared to $I_t^{N,\theta^*}$ has been observed at around 5\%.
This in strong constrast with the relative difference between the discrete computation of $\Delta_t^{N,\hat\theta_t^N}$
and of $\Delta_t^{N,\theta^*}$,
the former being very close to 0
(which we can explain by relating $\Delta_t^{N,\hat\theta_t^N}$ to the derivative of the log-likelihood at $\hat\theta_t^N$).
Only the later is considered in the following.

The alignment between the empirical and theoretical covariance structures is first illustrated in Figure~\ref{fig_cov_comparison}-(A), which compares the empirical covariance matrix of $\Delta_t^{N,\theta^*}$ (computed over the $100$ replicates) with the theoretical Fisher information matrix $\wht I_t^{N}$. The visual similarity between the two matrices is confirmed by a relative Frobenius norm error of $11\%$. 
On the right panel, a similar analysis is conducted for the empirical covariance matrix of the MLE
as compared to $N^{-1} (\wht I_t^{N})^{-1}$.
The relative Frobenius norm error is at $20\%$, which is good yet not entirely satisfying for computing quasi-confidence intervals.

We conducted three multivariate goodness-of-fit tests for $\Delta_t^{N,\theta^*}$. The Henze-Zirkler test for multivariate normality \cite{HZ90} does not contradict the null hypothesis of normality ($p$-value of $0.45$). 
Then, Hotelling's $T^2$ \cite{Ho31} test does not reject either the null hypothesis of a null mean vector ($p$-value of $0.71$).
Finally, Mauchly's test \cite{Mau40} for sphericity was exploited 
after correcting the covariance matrix by considering $(\wht I_t^{N})^{-1/2} \Delta_t^{N,\theta^*}$.
The covariance structure is also far from being rejected ($p$-value of $0.20$).
So all three tests pass, providing strong empirical evidence that the distribution of $\Delta_t^{N,\theta^*}$ aligns with the theoretical predictions of the LAN property for $N=50$.

The same tests were then applied to the MLE.
Normality is confirmed (p-value of $0.52$ at the Henze-Zirkler test), as well as the mean value at $\ts$ 
(p-value of $0.52$ at the Hotelling's $T^2$ test),
and the projected covariance matrix
(p-value of $0.77$ at Mauchly's test 
applied 
to $N^{1/2}(\wht I^{N}_t)^{1/2}\, (\hat\theta_t^N - \ts)$).

\begin{figure}
	\centering
	\begin{subfigure}[b]{0.48\textwidth}
		\centering
		\includegraphics[width=\textwidth]{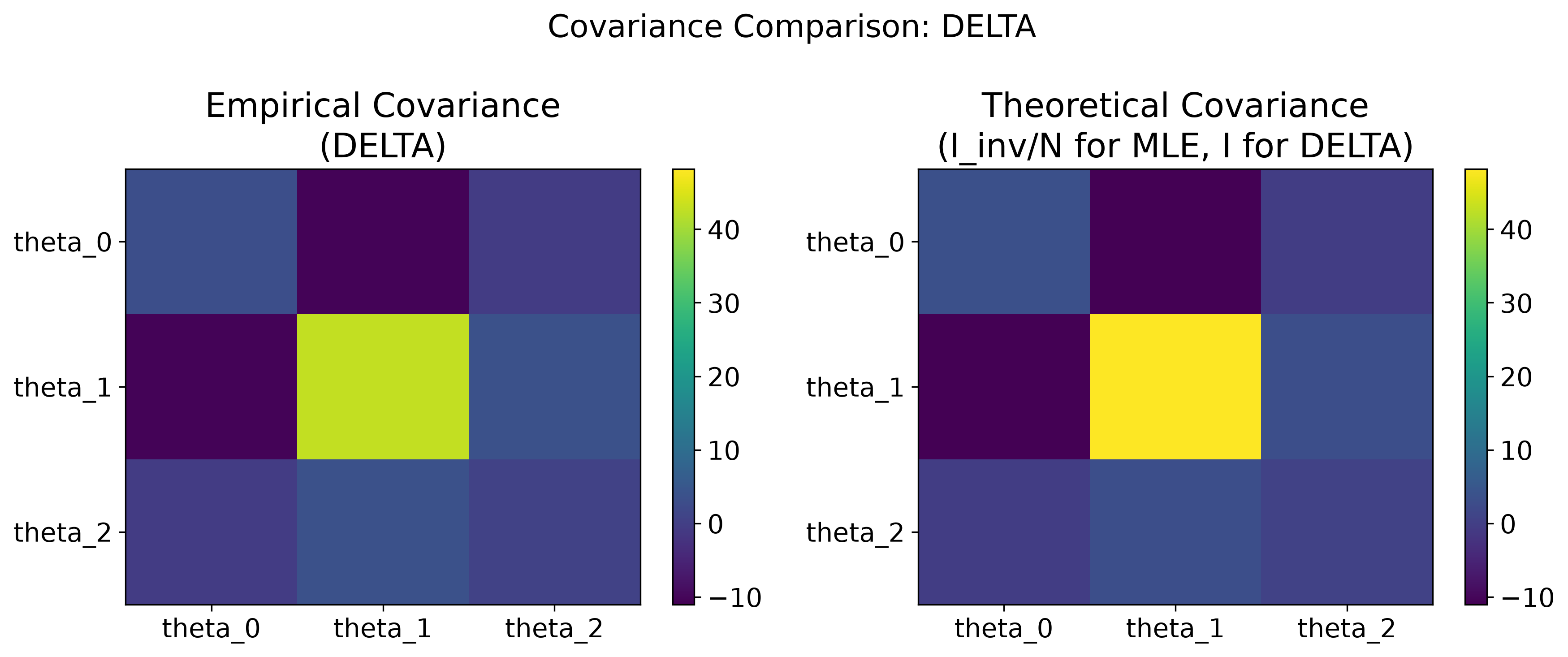}
		\caption{Empirical covariance matrix of $ \Delta^{N, \ts}_t$ (over 100 replicates) as compared to $\wht I^{N}_t$.
			Relative error in Frobenius norm: 0.11.}
	\end{subfigure}
	\hfill
	\vrule
	\hfill
	\begin{subfigure}[b]{0.48\textwidth}
		\centering
		\includegraphics[width=\textwidth]{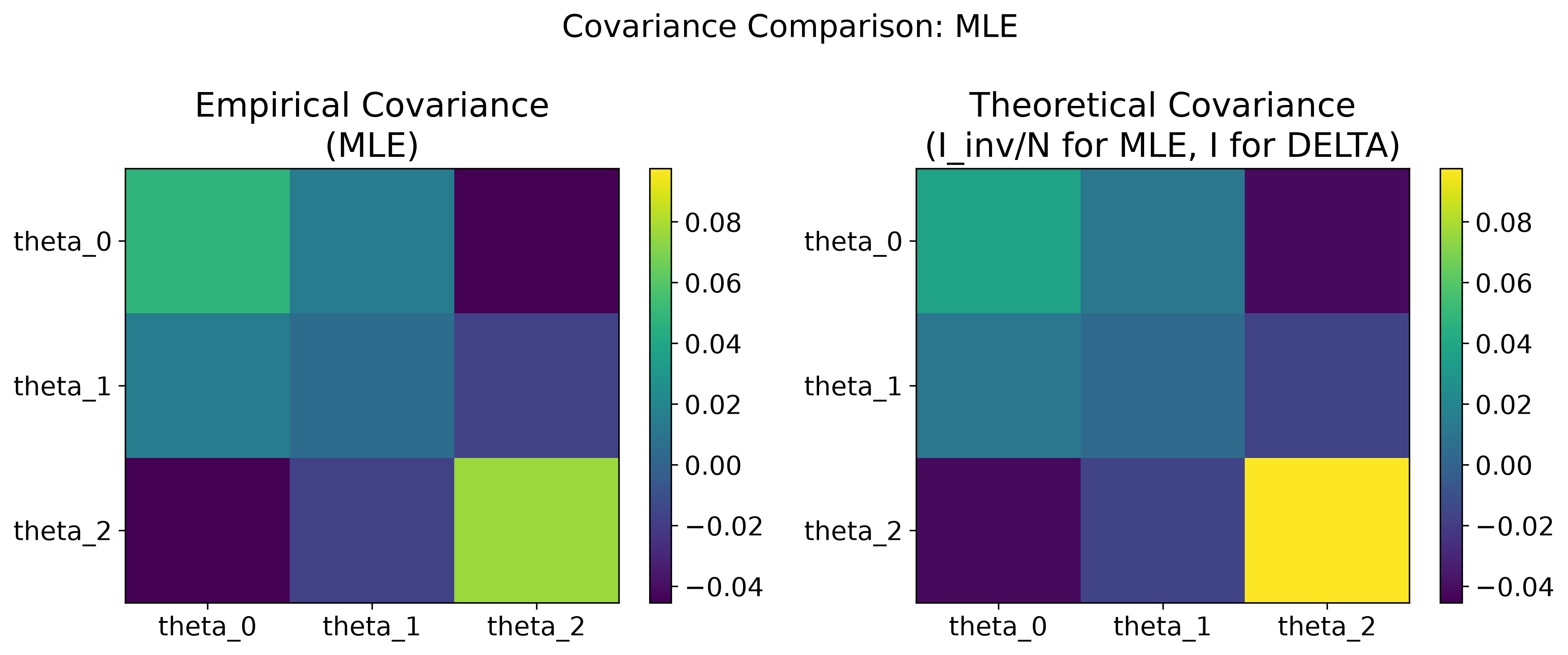}
		\caption{Empirical covariance matrix of the MLE as compared to $N^{-1} (\wht I^{N}_t)^{-1}$.
			Relative error in Frobenius norm: 0.20.
		}
	\end{subfigure}
	\caption{Comparison between the empirical covariance and the prediction with the estimated Fisher information matrix.}
	\label{fig_cov_comparison}
\end{figure}

\subsection{MLE performance and alignment with the LAN predictor}
\label{sec_mle_alignment}

In Figure~\ref{fig_LAN_MLE}-(A), 
we compare the histograms derived from the MLE $\hat \theta_t^N$ and the LAN predictor 
$\check \theta_t^N = \ts + 	(\wht I^{N, \ts}_t)^{-1} 	 \Delta^{N, \ts}_t/\sqrt{N}$.
It demonstrates that they have very similar distributions,
close to normality
and with very similar intervals of fluctuation (\texttt{Mean}$\pm 2$\texttt{Std}).

To quantify more precisely how closely the MLE  is well-described by the LAN predictor  in finite-$N$ regimes, we analyze the relative errors between the two quantities as compared to $\theta^*$ across a set of $100$ simulations.
The relative error is defined as 
$\|\hat \theta_t^N - \check \theta_t^N\|/\|\hat \theta_t^N - \theta_*\|\,.$
A small relative error is indicative 
that the higher-order terms in 
entail indeed second order corrections
for the MLE, and that the numerical estimation has been efficient.

Figure~\ref{fig_LAN_MLE}-(B) displays the cumulative distribution function (c.d.f.) of the relative error across the 100 simulations, in blue line for the Euclidian distance.
It is also natural to check 
the quality of estimation
when we take into account the expected covariance matrix,
as it was done e.g. in Corollary~\ref{corol:hajek}.
This corresponds to computing the relative error with respect to a relevant Mahalanobis distance $\|.\|_M$, 
where $\|\theta\|_M^2 = N^{1} \theta^\top \,\wht I_t^{N, \theta^*}\, \theta$\footnote{In computing the relative error,  the scaling factor $N$ plays no role and can be forgotten}.
As we see in Figure~\ref{fig_LAN_MLE}-(B),
the LAN predictor appears to fit closely 
the MLE  for  both distances, with very similar distributions. 
$50\%$ of the replicates have a relative error below 10\%,
and close to 90\% of them below 20\%, for both distances.

For some first sets of parameters,
arbitrarily chosen,
the relative error in Euclidian distance was much larger.
We suspected it could be partly due to the fact that one of the parameters (often $\theta_2$) is much worse estimated than the others, 
leading to larger deviations between $\hat \theta_t^N$   and $\check \theta_t^N$ in this direction as compared to $\theta_*$. 
This was our original motivation for computing 
the relative error 
in terms of the Mahalanobis distance $\|.\|_M$. 

\begin{figure}
	\centering
	\begin{subfigure}[b]{0.48\textwidth}
		\centering
		\includegraphics[width=\textwidth]{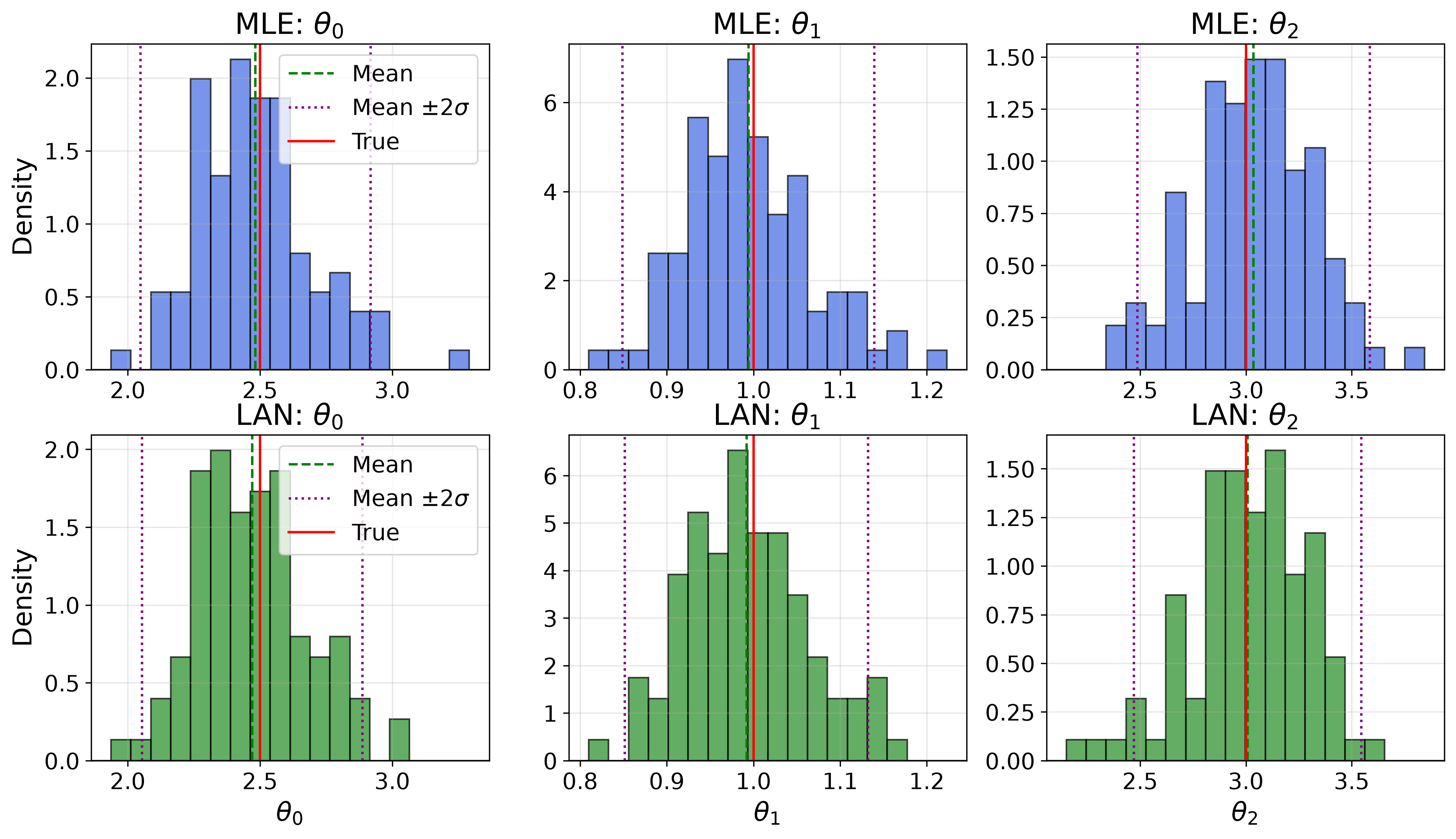}
		\caption{Histograms of the MLE (top row) and LAN predictor (bottom row) for the three parameters $\theta_0$, $\theta_1$, and $\theta_2$. The red lines indicate the true parameter values $\theta^*= [2.5, 1.0, 3.0]$, the dashed green line the empirical mean and the two orange dotted lines the interval of fluctuations (Mean $\pm 2\sigma$) . }
	\end{subfigure}
	\hfill
	\vrule
	\hfill
	\begin{subfigure}[b]{0.48\textwidth}
		\centering
		\includegraphics[width=\textwidth]{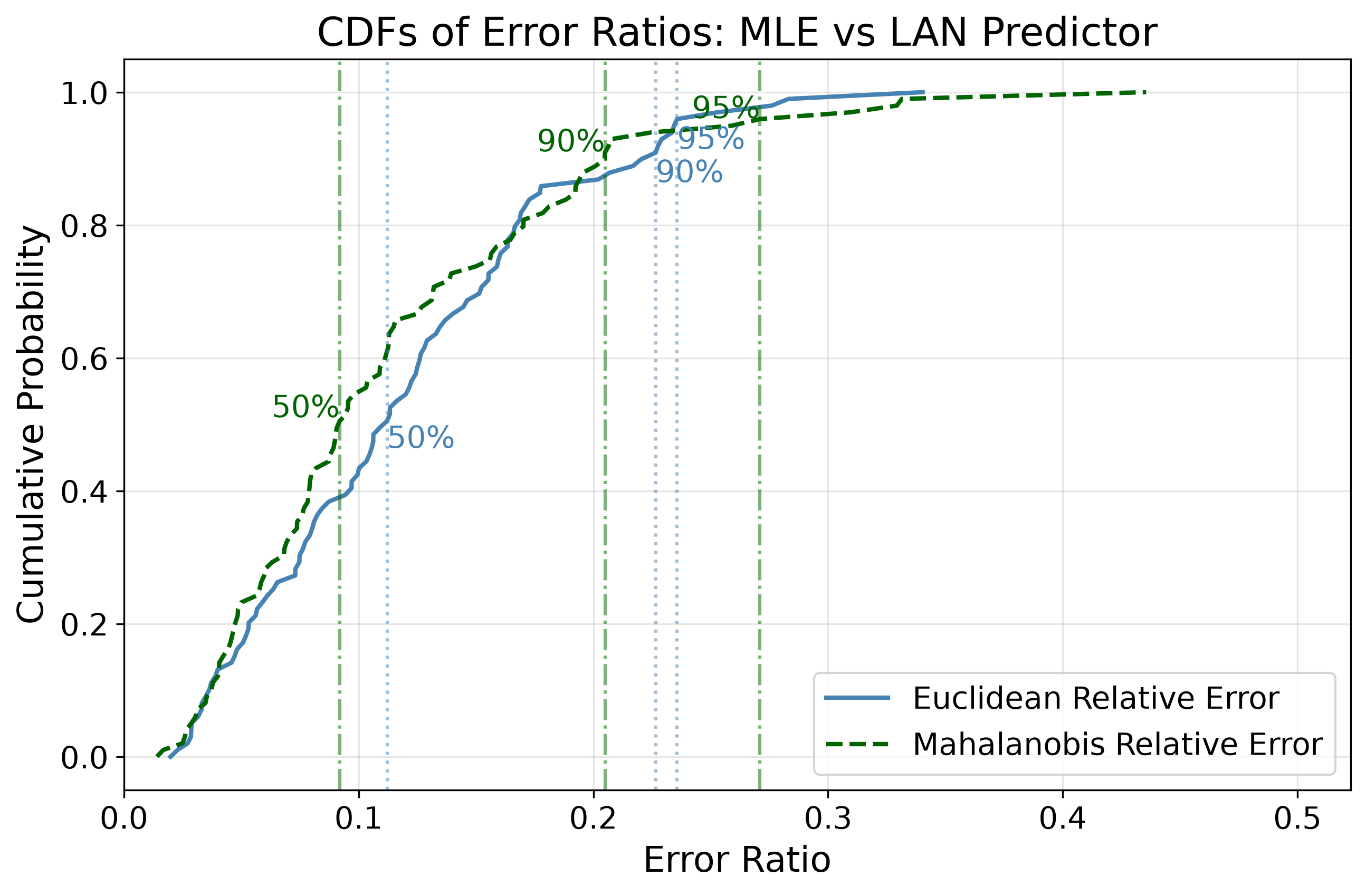}
		\caption{C.d.f. of the relative error $\|\hat \theta_t^N - \check \theta_t^N\|/\|\hat \theta_t^N - \theta_*\|$ for the Euclidian distance (blue line) and the Mahalanobis distance (green dashed line). Vertical lines display the quantiles of both distributions corresponding to 
			$50$\%, $90$\% and $95$\%.
		}
	\end{subfigure}
	\caption{Comparison between the estimated MLE $\hat \theta_t^N$ and the LAN predictor 
		$\check \theta_t^N = \ts + 	(\wht I^{N, \ts}_t)^{-1} 	 \Delta^{N, \ts}_t/\sqrt{N}$,
		as compared to $\ts$.}
	\label{fig_LAN_MLE}
\end{figure}

\subsection{Conclusion of the simulations}
\label{sec_cl_simu}

This numerical evaluation complements the theoretical results of Section~\ref{sec_results} and demonstrates that the LAN approximation remains accurate even for moderate population sizes where stochastic fluctuations are still visible.

\section{Auxiliary results}
\label{sec_aux}
This section gathers  results used to establish the  LAN property 
and to analyze the Maximum Likelihood Estimator.
\subsection{Continuity}
\begin{proposition}\label{prop:conv}
	Under Assumptions~\ref{ass:finite}-\ref{ass:moments},
	for any continuous and bounded function $g:\bR\to\bR$, 
	the following convergence holds in   $P^{\theta^*}_N$- probability
	$$\forall t>0, \quad \int_0^t\mu^{N,\ts}_s(g)ds\to\int_0^t\bar \mu_s^{\ts}(g)ds .$$
\end{proposition}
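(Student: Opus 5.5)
The plan is to derive the statement from the propagation of chaos recalled in Section~\ref{sec_limit}, through a second-moment (variance) computation. Write
\[
\int_0^t\mu^{N,\ts}_s(g)\,ds=\frac1N\sum_{i=1}^N Y^{i,N},\qquad Y^{i,N}:=\int_0^t g\big(X^{i,N,\ts}_s\big)\,ds .
\]
Each $Y^{i,N}$ is bounded by $t\|g\|_\infty$. By exchangeability of the system \eqref{eq:finite}, since the initial conditions are i.i.d. and the Poisson measures are i.i.d.,
\[
\bE_{\Pr^{\ts}_N}\Big[\Big(\frac1N\sum_i Y^{i,N}-c\Big)^2\Big]
=\frac1N\bE\big[(Y^{1,N}-c)^2\big]+\frac{N-1}{N}\,\bE\big[(Y^{1,N}-c)(Y^{2,N}-c)\big],
\]
where $c:=\int_0^t\bar\mu^{\ts}_s(g)\,ds$. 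The first term is $O(1/N)$. It therefore suffices to show that $\bE[Y^{1,N}Y^{2,N}]\to c^2$ and $\bE[Y^{1,N}]\to c$; Chebyshev's inequality then gives convergence in probability.

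\textbf{Step 1.} By propagation of chaos, $(X^{1,N,\ts},X^{2,N,\ts})$ converges in law in $\cD([0,t],\bR)^2$ to $(\bar X^{1,\ts},\bar X^{2,\ts})$, which are i.i.d. with law $\bar\mu^{\ts}$. The convergence in the product Skorokhod topology of the coordinate pair is all we need.

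\textbf{Step 2.} The functional $F:x\mapsto\int_0^t g(x_s)\,ds$ is continuous on $\cD([0,t],\bR)$ at every $x$. Indeed, if $x^n\to x$ in Skorokhod topology, then $x^n_s\to x_s$ for every continuity point $s$ of $x$, hence for Lebesgue-a.e.\ $s$. Moreover $\sup_n\sup_s|x^n_s|<\infty$, and $g$ is continuous and bounded, so dominated convergence gives $F(x^n)\to F(x)$.

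\textbf{Step 3.} It follows that $(Y^{1,N},Y^{2,N})\Rightarrow(F(\bar X^1),F(\bar X^2))$. Since everything is bounded, the continuous mapping theorem yields
\[
\bE[Y^{1,N}Y^{2,N}]\to\bE[F(\bar X^1)]^2=c^2 ,
\]
because $\bE[F(\bar X^1)]=\int_0^t\bar\mu^{\ts}_s(g)\,ds$ by Fubini. The same argument gives $\bE[Y^{1,N}]\to c$.

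The main subtlety is Step 2. Pointwise evaluation is not Skorokhod-continuous, so one must pass through the time integral and use that the set of discontinuity times of a càdlàg path is countable. A second point to check is that the cited propagation of chaos really gives joint convergence of two fixed coordinates, not only convergence of the marginals. If instead only convergence of the empirical measure to $\bar\mu^{\ts}$ in $\mathcal P(\cD)$ were available, the same continuity of $F$, applied to $\mu\mapsto\mu(F)$, would conclude directly, since the limit is deterministic.
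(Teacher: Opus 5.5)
Your proof is correct, and it takes a different route from the paper's.

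The paper works with the empirical measure itself. It invokes the convergence in law of $\mu^{N,\ts}$ to the deterministic $\bar\mu^{\ts}$ in $\mathcal P(\cD(\bR_+,\bR))$. It then shows that $m\mapsto\int_0^t m_s(g)\,ds$ is continuous at $\bar\mu^{\ts}$, using Billingsley's result that the projection $m\mapsto m_s$ is continuous at laws with no fixed jump at time $s$; it checks this property for the McKean--Vlasov limit and finishes with dominated convergence in $s$. It concludes by the continuous mapping theorem, since convergence in law to a constant is convergence in probability.

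You instead use exchangeability and two-particle convergence, and close with a variance bound and Chebyshev. This buys you two things. First, you get the stronger $L^2$ convergence. Second, you only need continuity of the path functional $F(x)=\int_0^t g(x_s)\,ds$, which holds at every path. So you never have to verify the no-fixed-jump property of the limit, nor work with the topology of $\mathcal P(\cD)$.

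The two inputs are in fact equivalent for exchangeable systems, by Sznitman's characterization of chaos. Your second-moment computation is essentially the standard proof of one direction of that equivalence, specialized to $F$.

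Your closing remark is worth keeping. By Fubini, $\int_0^t m_s(g)\,ds=m(F)$, and $F$ is bounded and continuous on path space. Hence $m\mapsto\int_0^t m_s(g)\,ds$ is continuous at every $m$, which shortens the paper's own argument.

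One cosmetic point: the propagation of chaos quoted in the paper is on $\cD(\bR_+,\bR)$, not $\cD([0,t],\bR)$. Your Step 2 works verbatim there, since Skorokhod convergence on $\bR_+$ still gives convergence at continuity points of the limit. You can therefore apply $F$ directly without restricting paths to $[0,t]$.
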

\begin{proof}
	We assume that the family of i.i.d. Poisson random measures $(\pi_i)_{ i\geq 1},$ is defined on some filtered  probability space $(\Omega, \mathcal F, (\mathcal F_t)_t,\Pr)$ and that for each $\theta\in\Theta$ the strong solution of \eqref{eq_limit} is constructed. The law on $\cD(\bR_+,\bR)$ equipped with the natural filtration of  this solution  is denoted by $\bar\mu^{\theta}.$
	We know that $\mu^{N, \ts} $ converges in law in $\mathcal P(\cD(\bR_+,\bR))$ to $\bar\mu^{\ts}$
	under Assumptions~\ref{ass:finite}-\ref{ass:moments},
	see \cite{andreis_mckeanvlasov_2018}.
	
	Let $g:\bR\to\bR$ be continuous and bounded, $t>0$ fixed and $F^t_g:\mathcal P(\cD(\bR_+,\bR))\to\bR$ given by
	$$\mathcal P(\cD(\bR_+,\bR))\ni m\mapsto\int_0^tm_s(g)ds.$$
	Using Billingsley, for fixed $s>0,$ the projection application $\mathcal P(\cD(\bR_+,\bR))\ni m\mapsto m_s\in \mathcal P(\bR)$ is continuous 
	in all point $\tilde m\in \mathcal P(\cD(\bR_+,\bR))$, such that
	$$\tilde m(\{\gamma\in\cD,\; \gamma_s\ne\gamma_{s-}\})=0.$$
	It is easy to see that the law of the limit McKean-Vlasov equation satisfy this property, since 
	\begin{equation*}
		\bar\mu^{\ts} (\{\gamma\in\cD,\; \gamma_s\ne\gamma_{s-}\})
		\leq  \Pr^{}(\pi(s\times[0,  B_0])>0)=0.
	\end{equation*}
	Using the continuous mapping theorem, if $m^n$ converges in law to $m$, then $m^n_s$ converges in law to $m_s$, which means that
	$\forall g\in C_b(\bR),$ $m^n_s(g)\to m_s(g).$ We conclude by dominated convergence theorem that
	$\int_0^t m_s^n(g)ds\to\int_0^tm_s(g)ds.$ As a result $F^t_g$ is continuous in $\bar\mu$ and the proposition follows. 
\end{proof}

\subsection{Handleable likelihood}

Let us give a more handleable expression of the likelihood. 
Denote $\bar \pi^j(\Rd t, \Rd z)=\int_{\bR}\pi^j(\Rd t, \Rd z, \Rd u)$ and as usually denote  $\wtd \pi^j$ the compensated measure: 
\begin{equation*}
	\wtd \pi^j(\Rd t, \Rd z, \Rd u ) 
	= \pi^j (\Rd t, \Rd z, \Rd u ) - \Rd s\, \Rd z\, \nu(\Rd u),\quad
	\wtd{ \bar \pi}^j(\Rd t, \Rd z) 
	= \int_\bR \wtd \pi^j(\Rd t, \Rd z, \Rd u)\,.
\end{equation*}
Define
\begin{equation}\label{eq_def_JN}
	J^{N,\ts} (\Rd s, \Rd x)
	= \sum_{j= 1}^N\sum_{n\ge 1} \delta_{\Big(T^j_n, X^{j, N,\ts}_{T^j_n -}\Big)}(\Rd s, \Rd x),
\end{equation}
in which all jump times and corresponding neuron potentials are recorded,
and the related compensated measure:
\begin{equation}\label{eq_def_JNcomp}
	\wtd J^{N,\ts}  = J^{N,\ts} - \varUpsilon^{N, \theta^*},\quad 
	\text{with }\;  \varUpsilon^{N, \ts} (\Rd s, \Rd x)
	= N\, f_{\ts}(x)\,
	\mu_s^{N, \ts}(\Rd x)\, \Rd s\,,
\end{equation}
and
\begin{equation*}
	\mu_s^{N, \ts}(\Rd x)
	= \frac1N\sum_{j = 1}^N \delta_{X^{j, N,\ts}_{s}}(\Rd x) ,
\end{equation*}
where $ \mu_s^{N, \ts}$ denotes the empirical measure  of the  system $(X^{i, N,\theta^*})_{ i\le N}$ at time $t. $

Therefore
\begin{equation}\label{eq_def_LN}
	\log  L_t^{ \theta/\theta^*}  
	= \int_{ [0, t ] \times \bR} \log \left( \frac{ f_{\theta} }{ f_{\theta^*} }\right)(x)\; J^{N,\ts} ( \Rd s, \Rd x) 
	- \int_{ [0, t ] \times \bR} \left( \frac{ f_{\theta} }{ f_{\theta^*} }(x) - 1\right)  \varUpsilon^{N, \ts} (\Rd s, \Rd x)\,,
\end{equation}
which can also be rewritten as 
\begin{equation}\label{eq:decllh}
	\log  L_t^{ \theta/\theta^*}  
	= M_t^{N,\theta/\theta^*}+A_t^{N,\theta/\theta^*}  \,,
\end{equation}
where 
\begin{align*}
	M_t^{N,\theta/\theta^*} &:= \iint_{ [0, t ] \times \bR}\left(
	\log  \frac{ f_{\theta} }{ f_{\theta^*} }\right)(x)
	\;  \wtd J^{N,\ts}  (\Rd s, \Rd x)
\end{align*}
and
\begin{align*}
	A_t^{N,\theta/\theta^*} &:= \int_0^t\!\int_{\bR}\left(  
	\log  \frac{ f_{\theta} }{ f_{\theta^*} }  
	-  \frac{ f_{\theta}}{f_{\theta^*}} + 1
	\right)  (x)\;
	\varUpsilon^{N, \theta^*}  (\Rd s, \Rd x)
	\\&= N\int_0^t \mu^{N, \ts}_s \Big[
	\Big( \log  \frac{ f_{\theta} }{ f_{\theta^*} }  -  \frac{ f_{\theta}}{f_{\theta^*}} + 1\Big)\cdot f_{\theta^*}
	\Big]\; \Rd s. 
\end{align*}


Now, put $\theta=\ts+\hDev/\sqrt N$ and define
\begin{equation}\label{eq:decM}
	M_t^{N,\theta^*/\theta^*}:=\hDev\,  \Delta_t^{N,\theta^*}
	+\wtd M_t^{N, \theta/\theta^*}
	,
\end{equation}
with
\begin{equation}\label{eq_def_MN1}
	\begin{split}
		\;  \Delta_t^{N,\theta^*}
		&:=\frac 1{\sqrt N}\iint_{[0,t]\times \bR}\frac{\dot{f}_{\theta^*} }{{f}_{\theta^*}} (x)\;
		\wtd J^{N,\ts}  ( \Rd s, \Rd x)
		\\& =\frac 1{\sqrt N}\sum_{j=1}^N 
		\iint_{[0,t]\times \bR^+}
		\frac{\dot{f}_{\theta^*} }{{f}_{\theta^*}} \big(X^{j, N,\ts}_{s- }\big) 
		\idc{ z \le f_{\theta^*} \big( X^{j, N,\ts}_{s- }\big)} 
		\wtd{ \bar \pi}^j (\Rd s, \Rd z )\,
	\end{split}
\end{equation}
and
\begin{align*}
	\wtd M_t^{N, \theta/\theta^*}
	&:= \iint_{[0,t]\times \bR}\left ( \log \frac{f_{\theta }}{f_{\theta^*}}    -(\theta - \theta^*)\eT\cdot \frac{\dot{f}_{\theta^*}}{f_{\theta^*}}\right ) (x)\; \tilde J^{N,\ts} ( \Rd s, \Rd x)
	\\&=\sum_{j=1}^N\int_{[0,t]\times \bR_+}\left ( \log
	\frac{f_{\theta }}{f_{\theta^*}}   
	-(\theta - \theta^*)\eT\cdot  \frac{\dot{f}_{\theta^*}}{f_{\theta^*}} \right )
	\big(X^{j, N,\ts}_{s- }\big) 
	\idc{ z \le f_{\theta^*} \big( X^{j, N,\ts}_{s- } \big) }
	\wtd{ \bar \pi}^j (\Rd s, \Rd z)\,.
\end{align*}
Letting $g:y\in (0,\infty)\mapsto \log(1+y) - y$,
we observe that $g(y)\sim - \frac{1}{2} y^2$ for $y$ in the vicinity of $0$.
This explains the interest in the following decomposition of $A_t^{N,\theta/\ts}:$
\begin{equation}\label{eq:decA}
	A_t^{N,\theta/\theta^*}=\frac{-1}{2}\,\hDev\eT
	I_t^{N, \ts}\hDev + \wtd A_t^{N, \theta/\theta^*},
\end{equation} 
where 
\begin{equation}\label{eq_def_INts}
	I_t^{N, \ts} = \frac{1}{N} \iint_{[0,t]\times \bR}
	\Big(\frac{\dot f_{\theta^*}}{f_{\theta^*}}\Big)^{\otimes 2} (x)\;
	\varUpsilon^{N, \theta^*} (\Rd s,\Rd x) 
	=\int_0^t 
	\mu^{N, \ts}_s \Big[\frac{(\dot{f}_{\theta^*})^{\otimes 2}}{f_{\theta^*}}
	\Big]\; \Rd s\,
\end{equation}
and
\begin{equation*}
	\begin{split}
		\wtd A_t^{N, \theta/\theta^*}
		&= \iint_{[0,t]\times \bR}\Big(
		g\circ\Big( \frac{ f_{\theta  }}{f_{\theta^*}} - 1\Big) + \frac{1}{2} 
		\Big[(\theta-\theta^*)\eT
		\frac{\dot f_{\theta^*}}{f_{\theta^*}}\Big]^2
		\Big)(x)
		\;\varUpsilon^{N, \theta^*} (\Rd s,\Rd x)
		\\&= N \int_0^t 
		\mu^{N, \ts}_s\Big[ \Big(
		g\circ\Big( \frac{ f_{\theta  }}{f_{\theta^*}} - 1\Big) + 
		\frac{1}{2} 
		\Big[(\theta-\theta^*)\eT
		\frac{\dot f_{\theta^*}}{f_{\theta^*}}\Big]^2
		\Big)\cdot f_{\theta^*}\Big] \Rd s
		\,.
	\end{split}
\end{equation*}

\section{Proof of the LAN property}\label{sec:proof_lan}

\begin{proposition}\label{prop_aux} Under Assumptions~\ref{ass:theta} to \ref{ass:deriv}, for all $t>0,$ the following convergences hold, as $N\to \infty$,
	\begin{enumerate}[(i)]
		\item $ I_t^{N, \ts} {\longrightarrow}\; I_t^{\theta^*}$ in $\Pr^{\theta^*}_N$-probability;
		\item $
		\Delta_t^{N,\theta^*}
		{\longrightarrow} \;  {\mathcal{N}}(0, I_t^{\theta^*})$ in distribution  under $\Pr^{\theta^*}_N$;
		\item $ \wtd A_t^{N, \theta/\theta^*} \longrightarrow 0$ in  $\Pr^{\theta^*}_N$\ -probability;
		\item $\wtd M_t^{N, \theta/\theta^*}\longrightarrow 0$ in $\Pr^{\theta^*}_N$-probability.
	\end{enumerate}
\end{proposition}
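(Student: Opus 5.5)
Items (i) and (iii) follow from the law of large numbers in Proposition~\ref{prop:conv} combined with Taylor bounds, while (ii) and (iv) follow from martingale arguments for integrals against the compensated measure $\wtd J^{N,\ts}$.

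\textbf{Step (i).} The integrand $x\mapsto (\dot f_{\ts})^{\otimes 2}/f_{\ts}(x)$ is bounded by Assumption~\ref{ass:dif}. It is continuous if $\dot f_{\ts}$ is continuous in $x$; this should follow from Assumption~\ref{ass:deriv} and the Lipschitz property of $f_\theta$ uniformly in $\theta$, since $\dot f_{\ts}$ is then a uniform limit of Lipschitz difference quotients. Applying Proposition~\ref{prop:conv} entrywise gives $I_t^{N,\ts}\to I_t^{\ts}$ in probability. If continuity is not available, one would instead approximate by continuous bounded functions in $L^1(\int_0^t\bar\mu_s^{\ts}\,\Rd s)$; controlling the empirical measure side of that approximation is the delicate point, and I expect this to be the main technical obstacle.

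\textbf{Step (ii).} $\Delta_t^{N,\ts}$ is a $d$-dimensional purely discontinuous martingale, being a sum of stochastic integrals against the compensated Poisson measures. Its predictable quadratic variation is exactly $I_t^{N,\ts}$, which converges in probability to the deterministic matrix $I_t^{\ts}$ by (i). Its jumps are bounded by $B_1/\sqrt N\to 0$, since the $\pi^j$ have no common jumps almost surely. The martingale central limit theorem (Rebolledo, or Jacod–Shiryaev VIII.3.22), applied through the Cramér–Wold device, then gives convergence in law to $\mathcal N(0,I_t^{\ts})$. Its Lindeberg condition holds trivially because the jumps are uniformly small.

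\textbf{Steps (iii) and (iv).} Write $y(x)=\frac{f_\theta}{f_{\ts}}(x)-1$ with $\theta=\ts+\hDev/\sqrt N$.
\begin{itemize}
\item By Assumption~\ref{ass:deriv}, $y=N^{-1/2}\hDev\eT\dot f_{\ts}/f_{\ts}+r$ with $|r|\le C\|\hDev\|^2/N$. Hence $|y|\le c/\sqrt N$ uniformly in $x$, and $y$ lies in a neighbourhood of $0$ for large $N$.
\item On that neighbourhood, $|g(y)+\tfrac12 y^2|\le c|y|^3$. Also $|y^2-[(\theta-\ts)\eT\dot f_{\ts}/f_{\ts}]^2|\le c\,|r|\,(|y|+N^{-1/2})=O(N^{-3/2})$.
\item Together these give the deterministic bound $|\wtd A_t^{N,\theta/\ts}|\le N\,t\,B_0\,O(N^{-3/2})\to0$, which proves (iii).
\end{itemize}
For (iv), the integrand $\psi=\log(1+y)-(\theta-\ts)\eT\dot f_{\ts}/f_{\ts}$ satisfies $|\psi|\le |\log(1+y)-y|+|r|\le c/N$ uniformly. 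Since $\wtd M_t^{N,\theta/\ts}$ is a martingale, its second moment equals $\bE\int_0^t N\mu_s^{N,\ts}[\psi^2 f_{\ts}]\,\Rd s\le N\,t\,B_0\,c^2N^{-2}\to0$. Chebyshev's inequality then gives convergence to $0$ in probability.
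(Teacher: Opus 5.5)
Your proposal is correct and follows essentially the same route as the paper: (i) from Proposition~\ref{prop:conv}; (ii) via Jacod--Shiryaev VIII.3.22, using $\langle\Delta^{N,\ts}\rangle_t=I_t^{N,\ts}$ and jumps bounded by $B_1/\sqrt N$; and (iii)--(iv) via deterministic uniform Taylor bounds of order $N^{-3/2}$ and $N^{-1}$ respectively. Your use of Assumption~\ref{ass:deriv} to exhibit $\dot f_{\ts}/f_{\ts}$ as a uniform limit of continuous difference quotients (hence continuous, at least for $\ts\in\mathring\Theta$) justifies the continuity that the paper asserts without proof in (i), so the fallback you flagged as the main obstacle is not needed.
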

\begin{proof}
	$(i)$ Using the representation \begin{equation*}
		I^{N, \theta^*}_t = \int_0^t\mu^{N, \ts}_s \Big[\frac{(\dot{f}_{\theta^*})^{\otimes 2}}{f_{\theta^*}}
		\Big]\; \Rd s\,
	\end{equation*}
	with $(\dot{f}_{\theta^*})^{\otimes 2}/f_{\theta^*}$ continuous  
	and bounded  by $B_1^2 B_0$  
	according to Assumption \ref{ass:dif}, 
	item $(i)$ follows from Proposition \ref {prop:conv}.
	
	
	$(ii)$ 
	Observe that  this result can be derived from a following more general statement: 
	\begin{equation*}
		(\Delta_s^{N,\theta^*})_{s\geq 0}
		\rightarrow	
		\Big(\int_0^s
		\Big(\bar\mu^{\ts}_r \Big[\frac{(\dot{f}_{\theta^*})^{\otimes 2}}{f_{\theta^*}} 		\Big]\Big)^{1/2}
		\; \Rd B_r\Big)_{s\geq 0},
	\end{equation*}
	in distribution in $\cD(\bR_+,\bR),$ as $N\to \infty$.
	Here $(B_s)_{s\geq 0}$ denotes a standard $d$-dimensional Brownian motion. To prove this last assertion  we  use martingale convergence theorem VIII.3.22 of \cite{JS}, therefore we need to satisfy the following convergences in $\Pr^{\ts}_{N}$ probability
	\begin{equation}\label{eq:bigjumpscontrol}
		\int_{\bR_+}\int_{\bR}
		\frac{(\dot{f}_{\theta^*})^{\otimes 2}}{f_{\theta^*}}(x)\,
		\idc{\Big \| \frac{\dot{f}_{\theta^*} }{f_{\theta^*}}(x)\Big \|>\eps \sqrt N}
		\mu^{N, \ts}_s(\Rd x)\;\Rd s
		\rightarrow 0, \quad \forall \eps>0\,,
	\end{equation}
	and Condition  $[\gamma'5-D]$ in \cite[p.432]{JS}
	\begin{equation}\label{eq:quadvariat}
		< \Delta^{N,\theta^*}>_t\rightarrow I_t^{\ts},
	\end{equation}
	where the quadratic variation is identified as follows:
	\begin{equation*}
		<\Delta^{N,\theta^*}>_t
		= \frac 1{N}\iint_{[0,t]\times \bR}
		\Big(\frac{\dot{f}_{\theta^*} }{{f}_{\theta^*}}\Big)	^{\otimes 2} (x)\;
		\varUpsilon^{N,\ts}  ( \Rd s, \Rd x)
		=\int_0^t 
		\mu^{N, \ts}_s \Big[\frac{(\dot{f}_{\theta^*})^{\otimes 2}}{f_{\theta^*}}
		\Big]\; \Rd s
		=  I^{N, \ts}_t\,.
	\end{equation*}
	Therefore \eqref{eq:quadvariat} is already proved
	in  point $(i)$.
	With $\|\dot{f}_{\theta^*}/f_{\theta^*}\|$  bounded by $B_1$,
	\eqref{eq:bigjumpscontrol} is straightforward, since
	for $N$ sufficiently large the  indicator function of the event $\Big \| \frac{\dot{f}_{\theta^*} }{f_{\theta^*}}(x)\Big \|>\eps \sqrt N$ is zero.
	It concludes the proof of the point $(ii).$ 
	
	$(iii)$ We remark that $g(y) + (1/2)\, y^2 = O(y^3)$ as $y$ tends to $0$.
	For $\frac{\dot f_{\theta^*}}{f_{\theta^*}}$
	is bounded, Assumption~\ref{ass:deriv}
	entails that 
	$\Big( \frac{ f_{\theta  }}{f_{\theta^*}} - 1\Big)$
	is sufficiently small for $N$ large enough so that the following holds for some $C_1, C_3,
	C_g >0$:
	\begin{equation*}
		\begin{split}
			\Big\| \frac{ f_{\theta  }}{f_{\theta^*}} - 1\Big\|_\infty
			&\le C_1\, \|\theta-\ts\|,\qquad 
			\Big\|\Big( \frac{ f_{\theta  }}{f_{\theta^*}} - 1\Big)^2
			- \, 
			\Big( (\theta-\ts)\eT\frac{ \dot f_{\ts }}{f_{\ts}}\Big)^2\Big\|_\infty
			\le C_3\, \|\theta-\ts\|^3\,,
			\\&\Big\|g\circ\Big( \frac{ f_{\theta  }}{f_{\theta^*}} - 1\Big)
			+ \frac{1}2\, \Big( (\theta-\ts)\eT\frac{ \dot f_{\ts }}{f_{\ts}}\Big)^2
			\Big\|_\infty
			\le C_g\, \|\theta-\ts\|^3 = O\Big(\frac1{N\,\sqrt{N}}\Big)\,.
		\end{split}
	\end{equation*}
	Since $f_{\ts }$ is bounded and $\mu^{N, \ts}_s$ has total mass $1$, it concludes the proof of $(iii)$.
	
	$(iv)$ We prove it by justifying the convergence to 0 of the quadratic variation:
	\begin{equation*}
		\begin{split}
			< \wtd M^{N, \theta/\theta^*}>_t
			&= \iint_{[0,t]\times \bR}\Big( \log \big[\frac{f_{\theta }}{f_{\theta^*}} (x) \big]  -(\theta - \theta^*)\eT\cdot \frac{\dot{f}_{\theta^*}}{f_{\theta^*}}(x)\Big)^2\; \varUpsilon^{N,\ts}  (\Rd s, \Rd x)
			\\&=N\int_0^t 
			\mu^{N, \ts}_s \Big[
			\Big( \log \big[\frac{f_{\theta }}{f_{\theta^*}}  \big] -(\theta - \theta^*)\eT\cdot \frac{\dot{f}_{\theta^*}}{f_{\theta^*}}\Big)^2
			\cdot f_{\ts}
			\Big]\; \Rd s\,.
		\end{split}
	\end{equation*}
	
	In the same spirit as for point $(iii)$, there exists $C_2>0$ such that:
	\begin{equation*}
		\Big\|\log \big[\frac{f_{\theta }}{f_{\theta^*}}  \big] -(\theta - \theta^*)\eT\cdot \frac{\dot{f}_{\theta^*}}{f_{\theta^*}}\Big\|_\infty
		\le C_2\, \|\theta - \theta^*\|^2
		= O(1/N)\,.
	\end{equation*}
	Therefore $< \wtd M^{N, \theta/\theta^*}>_t = O(1/N)$ which entails that the martingale  $ \wtd M^{N, \theta/\theta^*}$ converges to 0 under $\Pr^{\ts}_{N}$, as $N\to \infty$.
\end{proof}

\begin{proof}[Proof of Theorem~\ref{th:LAN}]
	
	Using the decomposition \eqref{eq:decllh} together with decompositions \eqref{eq:decM}, \eqref{eq:decA},
	we can write
	$$\log  L_t^{ \theta/\theta^*}  
	=  \hDev  \Delta_s^{N,\theta^*}-\frac 12 \hDev\eT I_t^{N, \theta^*}\hDev + r_N(\theta,\ts)\,,
	$$
	where $$r_N(\theta,\ts)= \wtd M_t^{N, \theta/\ts}+\wtd A_t^{N, \theta/\ts}.$$
	The proof is complete with the results of Proposition~ \ref{prop_aux} 
	telling us that both $\wtd M_t^{N, \theta/\ts}$ 
	and $\wtd A_t^{N, \theta/\ts}$ converge to zero in $\Pr^{\theta^*}_N$-probability, as $N\to \infty$. 
	
\end{proof}

\section{Maximum Likelihood estimator}
\label{sec_MLE}
\subsection{Consistency }\label{sec:consistency}
In this section we show the consistency of the MLE. 
\subsubsection{Proof of the consistency of the MLE }\label{sec:proof_consistency}

\begin{proof}[of Proposition \ref{prop_cons}]
	We first show that
	\[
	\frac 1N \log L_t^{ \theta/\theta^*}\big(X^{\theta^*,N}_{[0,t]}\big) \rightarrow
	A^{\theta^*}_t (\theta)  \quad 
	\]
	in $\Pr^{\theta^*}_N-$ probability, as $N\to \infty$, where $A^{\theta^*}_t:\Theta\to \mathbb{R}$ is such that $A^{\theta^*}_t (\theta)<0 = A^{\theta^*}_t (\theta^*)$ for any $\theta\in\Theta$
	with $\theta\neq \ts$.
	
	Recall (see LAN subsection) that the log-likelihood ratio process  
	is given by  $$ \log  L_t^{ \theta/\theta^*}  
	= A_t^{N,\theta/\theta^*} + M_t^{N,\theta/\theta^*},$$
	where the processes $A_t^{N,\theta/\theta^*}$ 
	and $M_t^{N,\theta/\theta^*}$  are defined in \eqref{eq:decllh}. 
	On the one hand,  using Proposition \ref{prop:conv} 
	together with Assumption \ref{ass:dif} $(i)$, we get that
	\begin{equation}\label{eq_conv_MNT}
		\frac 1{N^2}  <M^{N,\theta/\theta^*}>_t
		= \frac 1N\int_0^t \mu^{N, \ts}_s \Big[
		\Big( \log  \frac{ f_{\theta} }{ f_{\theta^*} }\Big)^2\cdot f_{\theta^*}
		\Big]\; \Rd s
		\rightarrow 0\,,
	\end{equation}
	which entails that 
	$$\frac 1N {M_t^{N,\theta/\theta^*}}\longrightarrow 0\; \; \mbox{in}\; \; \Pr^{\ts}_N-\mbox{probability}.$$
	For the consistency, we will need though a stronger result to deal with the uniformity in $\theta$ for this convergence.
	
	On the other hand, we exploit again Proposition~\ref{prop:conv}
	since the function $\big(\log  \frac{ f_{\theta} }{ f_{\theta^*} }  -  \frac{ f_{\theta}}{f_{\theta^*}} + 1\big)\cdot f_{\theta^*}$
	is bounded continuous on $\bR$, leading to:
	\begin{multline}\label{eq_conv_ANT}
		\frac 1N A^{N,\theta/\theta^*}_t
		= \int_0^t \mu^{N, \ts}_s \Big[
		\Big( \log  \frac{ f_{\theta} }{ f_{\theta^*} }  -  \frac{ f_{\theta}}{f_{\theta^*}} + 1\Big)\cdot f_{\theta^*}
		\Big]\; \Rd s \\
		\rightarrow \int_0^t \bar\mu^{\ts}_s \Big[
		\Big( \log  \frac{ f_{\theta} }{ f_{\theta^*} }  -  \frac{ f_{\theta}}{f_{\theta^*}} + 1\Big)\cdot f_{\theta^*}
		\Big]\; \Rd s:= A^{\theta/\theta^*}_t,
	\end{multline}
	in $\Pr^{\ts}_N-$ probability, as $N\to \infty$.
	Notice that $\log(r)-r+1\leq 0, \forall r\in \mathbb R,$ with the equality holding only at $r=1$.
	Therefore, 
	$$A^{\theta/\theta^*}_t \le 0 = A^{\ts/\theta^*}_t .$$
	In addition, Assumption~\ref{ass_non_deg} entails that  $A^{\theta/\theta'}_t < 0$ for any $\theta \neq \theta'\in \Theta$, see also Remark~\ref{rem_non_deg}. 
	$\theta^*$ is thus the only maximum of $\theta\mapsto A^{\theta/\theta^*}_t$.
	
	Secondly, we show that $:\theta \mapsto  \log L_t^{ \theta/\theta^*}$ 
	is unlikely to vary very abruptly.
	First note that $J^{N, \ts}([0, t]\times \bR)$ 
	(recall the definitions \ref{eq_def_JN} and \eqref{eq:const}) 
	is upper-bounded by
	\begin{equation*}
		J^{N, \ts}([0, t]\times \bR)\leq \bar J^{N}_t
		:= \sum_{j=1}^N \bar \pi^j\big([0, t]\times [0, B_0]\big)\,.
	\end{equation*}
	The law of $\bar \pi^j\big([0, t]\times [0, B_0]\big)$
	is Poisson with mean $t\, B_0$, for any $j$,
	and these variables are independent.
	By the strong law of large number, 
	$$\lim_{N\to\infty}\bar J^{N}_t/N=t\, B_0,\;\;\;\;\Pr-a.s.,$$ 
	where $\Pr$ is the probability on the original space  where $(\pi^j),\,j\in\mathbb{N}^*$ are defined.
	In particular,  $$\lim_{N\to\infty}\Pr(\mathcal J^N) = 1,$$ where 
	\begin{equation*}
		\mathcal J^N = \{\bar J^{N}_t/N \le 2 t\, B_0\}\,.
	\end{equation*}
	For any $\theta, \theta'\in \Theta$, 
	using the definition \eqref{eq_def_LN}, 
	on the event $\mathcal J^N$:
	\begin{equation}\label{eq_prop_unif_L}
		\begin{split}
			\frac 1N |\log  L_t^{ \theta/\theta^*}  -   \log  L_t^{ \theta'/\theta^*}|
			&\le \frac{J^{N, \ts}([0, t]\times \bR)}{N}
			\cdot \Big\|\log \frac{f_\theta}{f_{\theta'}}  \Big\|_\infty  
			+ t\, \|f_\theta - f_{\theta'}\|_\infty
			\le C_L\, |\theta - \theta'|
			\,, 
		\end{split}
	\end{equation}
	with $C_L := t\,(2 B_0 + 1)\,(B_1\vee B_L)>0$.

	Let $\delta, \eps>0$,  
	$B(\ts, \eps)$ be the ball in $\Theta$ of radius $\eps$ 
	and center $\ts,$ and 
	$$\eta := \min\big\{ |A^{\theta/\theta^*}_t |;\,
	\theta \in \Theta\setminus B(\ts, \eps)\big\}.$$
	Because of the previous observation  
	and the continuity of  $\theta \mapsto A^{\theta/\theta^*}_t$, 
	$\eta$ is positive.
	We can consider $N$ sufficiently large ($N\ge N_0$) to ensure 
	$\Pr([\mathcal J^N]^c) \le \delta/2$,
	and so restrict our analysis to the event $\mathcal J^N$.
	Let $r = \eta/(4C_L)>0$.
	Because $\Theta$ is compact,
	there exists a finite sequence $(\theta_\ell)_ {\ell \in \II{1, L}}$
	of elements of $\Theta \setminus B(\ts, \eps)$
	such that the balls $\big(B(\theta_\ell, r)\big)_{\ell}$ cover $\Theta \setminus B(\ts, \eps)$.
	For each $\ell \in \II{1, L}$, \eqref{eq_conv_MNT} and \eqref{eq_conv_ANT}, entail that
	\begin{equation*}
		\frac 1N \log L_t^{ \theta_\ell/\theta^*}\big(X^{N, \ts}_{[0,t]}\big)
		{\longrightarrow} A^{\theta_\ell/\theta^*}_t \; \; 
		\mbox{in}\; \; \Pr^{\ts}_N-\mbox{probability, }  \mbox{as } N\to \infty.
	\end{equation*}
	We can therefore ensure
	that $\Pr^{\ts}_N\left(\mathcal A^N \right) \geq 1 - \frac{\delta}{2}$
	holds for any $N$ sufficiently large ($N\ge N_1$),
	where
	\begin{equation*}
		\mathcal A^N := 	
		\Big\{	\forall \ell \in \II{1, L},\,
		\left| \frac 1N \log L_t^{ \theta_\ell/\theta^*}\big(
		X^{N, \ts}_{[0,t]}\big) - A^{\theta_\ell/\theta^*}_t
		\right| \leq \frac{\eta}{4}
		\Big\}	\,.
	\end{equation*}
	Remark also that $A^{\theta_\ell/\theta^*}_t  < -\eta$, 
	for any $\ell\in \II{1, L}$ by definition of $\eta$ 
	and the sign of $A^{\theta/\theta^*}_t$.
	
	Let $\theta \in \Theta \setminus B(\ts, \eps)$. There exists $\ell \in \II{1, L}$ such that $\|\theta - \theta_\ell\| \leq r$.
	On the event $\mathcal J^N\cap \mathcal A^N$, we have, by the Lipschitz property \eqref{eq_prop_unif_L}, that
	\begin{equation*}
		\begin{split}
			\frac 1N \left| \log L_t^{ \theta/\theta^*} - \log L_t^{ \theta_\ell/\theta^*} \right|
			&\leq C_L\, \|\theta - \theta_\ell\| \leq C_L\, r = \frac{\eta}{4}\,.
		\end{split}
	\end{equation*}
	Therefore, for any $\theta \in \Theta \setminus B(\ts, \eps)$
	\begin{equation*}
		\begin{split}
			\frac 1N \log L_t^{ \theta/\theta^*}
			&\leq A^{\theta_\ell/\theta^*}_t  
			+ \frac{\eta}{4} + \frac{\eta}{4}< - \frac{\eta}{2}\,.
		\end{split}
	\end{equation*}
	On the other hand, $\log L_t^{ \theta^*/\theta^*} = 0$ by definition.
	Since $\hat\theta^N_t$ is the maximizer of $L_t^{\theta/\theta^*}$, we thus have:
	\begin{equation*}
		\frac 1N \log L_t^{ \hat\theta^N_t/\theta^*} \geq  0> \sup\Big\{\frac 1N \log L_t^{ \theta/\theta^*};\, \theta \in  \Theta \setminus B(\ts, \eps) \Big\}\,,
	\end{equation*}
	on the event $\mathcal J^N\cap \mathcal A^N$ for $N$ sufficiently large ($N\ge N_0\vee N_1$).
	It then follows that: 
	\begin{equation*}
		\Pr^{\ts}_N\big( |\hat\theta^N_t - \ts| \geq \eps \big) 
		\le \Pr\big([\mathcal J^N]^c\big)
		+ \Pr\big([\mathcal A^N]^c\big) \le \delta\,.
	\end{equation*}
	As $\eps, \delta$ are freely chosen, this concludes the proof of the consistency.
\end{proof}


\subsection{Limit distribution and local asymptotic minimax optimality of MLE}
In this section we show the asymptotic normality and the local asymptotic minimax optimality of MLE. The proof follows the arguments of  \cite[$(7.12)$]{Ho14}. In particular we show that MLE satisfies the "coupling property" of \cite[$(7.9)$]{Ho14}.
\subsubsection{Proof of Theorem \ref{thm_minimax}}

\begin{proof}
	Denote by $\ell^N[\theta]=\log L^{\theta/\ts}_t $, then using Taylor formula and the fact that $\hat \theta^N$ is the argmax of $\ell^N$:
	\begin{equation}\label{eq_dev_dl}
		- \dot\ell^N[\ts] = \Big[\int_0^1  \ddot\ell^N[\ts+\xi\cdot(\thN-\ts)]\,\Rd \xi\Big]\cdot (\thN - \ts),
	\end{equation}
	where $\dot\ell^N$ and $\ddot\ell^N$
	denote respectively the gradient vector and the Hessian matrix 
	of $\ell^N$.
	We identify that $N^{-1/2} \dot\ell^N[\ts]$ is equal to the term $	\Delta_t^{N,\theta^*} $ from the LAN decomposition, given by \eqref{eq:decM}, and recall that this martingale satisfies $<	\Delta^{N,\theta^*} >_t = I^{N, \ts}_t$.
	The convergence of the later to $I^{\ts}_t$
	and the asymptotic normality of $	\Delta_t^{N,\theta^*} $
	is justified in Proposition~\ref{prop_aux}.
	
	On the other hand, we identify
	\begin{equation*}
		\frac{\ddot\ell^N[\ts]}{N}
		= -I^{N,\ts}_t
		+ \wht M^{N,\ts}_t\,,
	\end{equation*}
	in which
	\begin{equation*} 	
		\wht M^{N,\ts}_t
		= \frac 1 {N} \int_0^t \int_{\mathbb R}\Big(\frac{\ddot{f}_\ts}{f_{\ts}}  
		- \big(\frac{\dot{f}_\ts}{f_{\ts} }\big)^{\otimes 2} \Big)(x)\; \wtd{J}^N_J (\Rd s, \Rd x),
	\end{equation*}
	so that $(\wht M^{N,\ts}_s)_{s\ge 0}$ defines a martingale with values in $\mathcal{M}^d$.
	The squared Frobenius norm of $\wht M^{N,\ts}_t$, denoted $\|\wht M^{N,\ts}_t\|_F^2$, can then be bounded in expectation,
	with the following identity:
	\begin{equation}
		\bE_{\Pr_N^{\ts}}\ \big(\|\wht M^{N,\ts}_t\|_F^2\big)
		=  \frac 1N\int_0^t \mu^{N, \ts}_s \Big[
		\Big\|\Big(\frac{\ddot{f}_\ts}{f_{\ts}} 
		-  \big(\frac{\dot{f}_\ts}{f_{\ts} }\big)^{\otimes 2} \Big)\Big\|_F^2\cdot f_{\ts}
		\Big]\; \Rd s.
	\end{equation}
	which is obtained by expressing the expectation as the sum of the quadratic variations of the individual matrix entries
	\begin{equation*}
		\sum_{k, \ell = 1}^d \bE_{\Pr_N^{\ts}}\big(\langle\wht M^{N,\ts}[k, \ell]\rangle_t\big)
		= \sum_{k, \ell = 1}^d \frac 1N\int_0^t \mu^{N, \ts}_s \Big[
		\Big(\Big(\frac{\ddot{f}_\ts}{f_{\ts}} 
		-  \big(\frac{\dot{f}_\ts}{f_{\ts} }\big)^{\otimes 2} \Big)[k, \ell]\Big)^2\cdot f_{\ts}
		\Big]\; \Rd s\,.
	\end{equation*}
	Then, uniformly over $x\in \bR$, we have that
	\begin{equation*}
		\Big\|\Big(\frac{\ddot{f}_\ts}{f_{\ts}}(x)
		-  \big(\frac{\dot{f}_\ts}{f_{\ts} }\big)^{\otimes 2} \Big)(x)\Big\|_F^2
		\le 2 \Big( \Big\|\frac{\ddot{f}_\ts}{f_{\ts}}(x) \Big\|_F^2 
		+ \Big\|\Big(\frac{\dot{f}_\ts}{f_{\ts} }\Big)^{\otimes 2}(x) \Big\|_F^2\Big) 
		\le B_2^2 + B_1^4<\infty\,,
	\end{equation*}
	where $B_2 = \sup_{\theta, x} \Big\|\ddot{f}_\theta(x)/f_{\theta}(x) \Big\|_F^2$ is finite under Assumptions~\ref{ass:dif} and \ref{ass_ddf},
	which entails that $\bE_{\Pr_N^{\ts}}\big(\|\wht M^{N,\ts}_t\|_F^2\big)$
	tends to 0 as $N\to \infty$.
	In particular, $\wht M^{N,\ts}_t$ tends to 0 in probability.
	Though the estimate is uniform in $\theta$, $\thN$ depends on the whole behavior of the Poisson processes, so that it cannot directly be applied to the sequence $\thN$ for instance.
	
	On the other hand, we see as in \eqref{eq_prop_unif_L}
	the following upper-bound on the event $\mathcal{J}^N$,
	that was defined in \eqref{eq_def_JN}:
	\begin{equation*}
		\begin{split}
			\frac{\|\ddot\ell^N[\theta] - \ddot\ell^N[\ts]\|_F^2}{N}
			\le &4 B_0\, t\, \sup_x \Big\|\Big(\frac{\ddot{f}_\theta}{f_{\theta}} - \frac{\ddot{f}_\ts}{f_{\ts}}\Big) (x) \Big\|_F^2
			+ \Big\|\Big(\big(\frac{\dot{f}_\theta}{f_{\theta} }\big)^{\otimes 2}
			- \big(\frac{\dot{f}_\ts}{f_{\ts} }\big)^{\otimes 2}\Big)(x) \Big\|_F^2
			\\&+ B_0\, t \sup_x \Big\|\frac{\ddot{f}_\theta - \ddot{f}_\ts}{f_{\ts}} (x) \Big\|_F^2\,,
		\end{split}
	\end{equation*}
	which tends to 0, as $\theta$ tends to $\ts$,  by the uniform continuity in $\ts$ of $\ddot{f}_\theta$, $\dot{f}_\theta$ and $f_\theta$.
	Let 
	\begin{equation*}
		\wht I^{N, \ts}_t
		= \frac{-1}N\int_0^1\ddot\ell^N[\ts+\xi\cdot(\thN-\ts)]
		\,\Rd \xi\,.
	\end{equation*}
	Since $\thN$ is a consistent estimator, $\wht I^{N, \ts}_t$ 
	is therefore equal to $I^{N, \ts}_t$ up to a term that converges in $\Pr^{\ts}_N$-probability to 0.
	So, recalling Proposition~\ref{prop_aux}, 
	we deduce that the following convergence in law under $\Pr^{\ts}_N$ holds that
	$$(\wht I^{N, \ts}_t, 	\Delta_t^{N,\theta^*} )
	\stackrel{\mathcal{L}}{\longrightarrow} (I_t^\ts, (I_t^\ts)^{1/2} W_t),$$ 
	as $N\to \infty$. We can then apply the continuous mapping theorem to the function $(x,y)\mapsto - \idc{\det(x)\neq 0}x^{-1}\, y$
	(because  $\det(I_t^\ts) \neq 0$ so that the set of discontinuity points is negligible for the limiting distribution).
	Using \eqref{eq_dev_dl}, we  deduce  that $N^{-1/2} (\thN - \ts)$ converges in distribution 
	to $\mathcal N\big(0,  (I_t^\ts)^{-1}\big)$, under $\Pr^{\ts}_N$, as $N\to \infty$.

	In addition, because $\wht  I^{N, \ts}_t -I^{N, \ts}_t $ converges in $\Pr^{\ts}_N$-probability to $0$,
	we can state the LAN property \eqref{eq:LAN} with $\wht  I^{N, \ts}_t$ instead of $I^{N, \ts}_t$.
	With \eqref{eq_dev_dl},
	\cite[Theorem 7.12(ii)]{Ho14} 
	thus proves the local asymptotic minimax bound in Theorem~\ref{thm_minimax}.
\end{proof}
%

\subsection{Optimality properties of the maximum likelihood estimator in the absence of large jumps.}
%


%
%
%
\subsubsection{Proof of the optimality properties of the MLE
	in the absence of large jumps. }
\label{sec_IH}
\hfill \\
In this section we show how
the results of Theorem~\ref{thm_optimality} are deduced from \cite[Section III.1]{IH13}
by adapting the approach proposed in \cite{dMH23}.
They rely on the next  two estimates
in combination with the LAN property.

Let us denote  $\lambda_{\theta^*}^N[\hDev]
= \ell^N[\theta^*+ \frac{\hDev}{\sqrt{N}}]
=\log L^{(\theta^*+ \hDev/\sqrt{N})/\theta^*}_t$,
for any $\hDev\in \bR^d$ such that $\theta^*+ \hDev/\sqrt{N}\in \Theta$.

\begin{lemma}\label{lem_reg_lklh}
	Under Assumptions~\ref{ass:finite} and 
	\ref{ass:dif}, for  any $r\ge d$,
	there exists $C>0$ such that the following holds for any $\hDev, \mathrm \hDev$ and any $\ts\in \Theta$:
	\begin{equation*}
		\bE_{\Pr^N_{\theta^*}}\Big[\Big|\exp\Big(\frac{\lambda_{\theta^*}^N[\hDev]}{r} \Big) - \exp\Big(\frac{\lambda_{\theta^*}^N[\mathrm \hDev]}{r} \Big)\Big|^r\Big]
		\le C\cdot \|\hDev-\mathrm \hDev\|^r\,.
	\end{equation*}	
\end{lemma}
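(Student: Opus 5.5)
Set $\theta=\ts+\hDev/\sqrt N$ and $\theta'=\ts+\mathrm\hDev/\sqrt N$. Since $e^{\lambda^N_{\ts}[\hDev]/r}=(L_t^{\theta/\ts})^{1/r}$, we can change measure inside the expectation. Up to the constant $C$, the quantity to bound is then
\[
\bE_{\Pr_N^{\theta'}}\Big[\big|(L_t^{\theta/\theta'})^{1/r}-1\big|^r\Big].
\]
This is a Hellinger-type distance of order $r$ between $\Pr^\theta_N$ and $\Pr^{\theta'}_N$. The plan is to bound it by the approach of Ibragimov--Has'minskii for point processes (as in the diffusion case of \cite{dMH23}), applied to the counting measures $J^{N}$ with intensities $f_{\theta'}(X^{j}_{s-})$. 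Under $\Pr^{\theta'}_N$, the process $Z_s=(L_s^{\theta/\theta'})^{1/r}$ is a positive semimartingale. We apply It\^o's formula to $Z_s$ using the representation \eqref{eq_def_LN}. At each jump of neuron $j$, $Z$ is multiplied by $\rho^{1/r}$, where $\rho=f_\theta/f_{\theta'}(X^j_{s-})$. Between jumps it has drift $-\frac1r\sum_j (f_\theta-f_{\theta'})(X^j_s)\,Z_s$. Writing $Z_t-1$ as a martingale part plus a compensator part gives
\[
Z_t-1=\int_0^t Z_{s-}\sum_j\big(\rho_j^{1/r}-1\big)\,\wtd J^{(j)}(\Rd s)+\int_0^t Z_{s}\sum_j f_{\theta'}(X^j_s)\Big(\rho_j^{1/r}-1-\tfrac{1}{r}(\rho_j-1)\Big)\Rd s .
\]

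Next we take the $r$-th moment. The pointwise bounds come first. By Assumption~\ref{ass:dif}, $|\rho_j-1|\le \ell_0 B_1B_0\|\theta-\theta'\|\lesssim\|\hDev-\mathrm\hDev\|/\sqrt N$, and $\rho$ stays in a fixed compact interval of $(0,\infty)$. Hence
\[
|\rho^{1/r}-1|\lesssim \|\hDev-\mathrm\hDev\|/\sqrt N,\qquad \big|\rho^{1/r}-1-\tfrac1r(\rho-1)\big|\lesssim \|\hDev-\mathrm\hDev\|^2/N .
\]
For the drift term, summing $N$ terms of order $\|\hDev-\mathrm\hDev\|^2/N$ gives a contribution of order $\|\hDev-\mathrm\hDev\|^2\int_0^t Z_s\,\Rd s$. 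For the martingale term we use the Burkholder--Davis--Gundy (or Novikov-type) inequality for purely discontinuous martingales. The $r$-th moment is controlled by the sum of two pieces. The first is $\bE[(\int Z_{s}^2 N\cdot\|\hDev-\mathrm\hDev\|^2/N\,\Rd s)^{r/2}]$. The second is $\bE[\int Z_s^r\sum_j |\rho_j^{1/r}-1|^r f\,\Rd s]$, which is of order $N^{1-r/2}\|\hDev-\mathrm\hDev\|^r$. Both are bounded once we know $\sup_s\bE_{\theta'}[Z_s^{r}]=\sup_s\bE_{\theta'}[L_s^{\theta/\theta'}]\le 1$, which holds because $L^{\theta/\theta'}$ is a supermartingale. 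We also use $\sup_s\bE[Z_s^2]\le1$ by Jensen, since $2\le r$.

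With these, the total bound is $C(\|\hDev-\mathrm\hDev\|^r+\|\hDev-\mathrm\hDev\|^{2r})$. To remove the $2r$ term we use two cases. When $\|\hDev-\mathrm\hDev\|\le 1$ the bound is already of the required form. When $\|\hDev-\mathrm\hDev\|>1$, we use the trivial bound $\bE|Z_t-1|^r\le 2^{r-1}(\bE Z_t^r+1)\le 2^r\le 2^r\|\hDev-\mathrm\hDev\|^r$. This gives the Lemma with a constant uniform in $\ts$, $\hDev$ and $\mathrm\hDev$; only the constants in \eqref{eq:const} enter.

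\textbf{Main obstacle.} The delicate step is the moment bound on the drift term. It involves $\bE[(\int_0^t Z_s\,\Rd s)^r]$, which is a higher moment of $Z$ and not only $\bE Z^r$. The route I would try first is to get it from Jensen in time, $(\int_0^t Z_s\,\Rd s)^r\le t^{r-1}\int_0^t Z_s^r\,\Rd s$, and then use $\bE Z_s^r\le1$. This reduces everything to the supermartingale property of the likelihood ratio and avoids needing any higher moments of $L$. The same Jensen step handles the $(\cdot)^{r/2}$ power in the martingale term, using $Z^2$ and $r/2\ge1$ (the case $r<2$ is vacuous unless $d=1$, where a direct $L^2$ argument works).
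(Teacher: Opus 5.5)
Your argument is correct for $r\ge 2$, but it takes a genuinely different route from the paper's.

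\textbf{The paper's route.} The paper never reduces to a Hellinger-type quantity. It writes the difference as an integral of $\partial_{\hDev}e^{\lambda^N_{\ts}[\cdot]/r}$ along the segment $\hDev_\xi=\hDev+\xi(\mathrm h-\hDev)$, applies Jensen and Fubini, and uses
$\|\partial_{\hDev}e^{\lambda^N_{\ts}[\cdot]/r}[\hDev_\xi]\|^r=L_t^{\theta_\xi/\ts}\,(r\sqrt N)^{-r}\|\partial_\theta\ell^N[\theta_\xi]\|^r$.
After changing measure to $\Pr_N^{\theta_\xi}$ for each $\xi$, only the $r$-th moment of the normalized score $\Delta^{N,\theta_\xi}_t$ remains. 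This is a $\Pr_N^{\theta_\xi}$-martingale whose bracket is dominated by $B_1^2N^{-1}\sum_j\bar\pi^j([0,t]\times[0,B_0])$. BDG then gives a bound uniform in $N$ and $\theta_\xi$, valid for any $r\ge1$, directly of the form $C\|\hDev-\mathrm h\|^r$.

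\textbf{Your route.} You change measure once, to $\Pr^{\theta'}_N$, and control $(L_t^{\theta/\theta'})^{1/r}-1$ through its semimartingale decomposition. You never differentiate $\lambda^N$; you only use $|f_\theta/f_{\theta'}-1|\lesssim\|\theta-\theta'\|$. So you need only the two endpoints in $\Theta$ plus a uniform Lipschitz bound in $\theta$, whereas the paper implicitly needs the whole segment $\{\ts+\hDev_\xi/\sqrt N\}$ in $\Theta$. The price is threefold:
\begin{itemize}
\item a second-order drift term producing $\|\hDev-\mathrm h\|^{2r}$, hence your case split;
\item a Novikov/Bichteler--Jacod inequality instead of plain BDG;
\item the restriction $r\ge 2$.
\end{itemize}
Your Jensen-in-time step does dispose of the obstacle you flagged, since it reduces everything to $\bE_{\theta'}[Z_s^r]=\bE_{\theta'}[L_s^{\theta/\theta'}]\le 1$.

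\textbf{The incomplete case $d=1$, $1\le r<2$.} Your parenthetical ``direct $L^2$ argument'' hides a real ingredient. Here $\bE_{\theta'}[Z_s^2]=\bE_{\theta'}[(L_s^{\theta/\theta'})^{p}]$ with $p=2/r>1$, and this is not controlled by the supermartingale property. It is easy to supply:
\begin{itemize}
\item Write $(L_s^{\theta/\theta'})^p$ as the exponential martingale with jump factor $\rho^p$, multiplied by $\exp\big(N\int_0^s\mu^{N,\theta'}_u[f_{\theta'}(\rho^p-1-p(\rho-1))]\,\Rd u\big)$.
\item Since $|\rho^p-1-p(\rho-1)|\lesssim\|\hDev-\mathrm h\|^2/N$, this gives $\bE_{\theta'}[(L_s^{\theta/\theta'})^p]\le e^{C\|\hDev-\mathrm h\|^2}$, which is bounded for $\|\hDev-\mathrm h\|\le 1$.
\item Let $M_t$ and $D_t$ be the martingale and drift parts of $Z_t-1$. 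Then $\bE|M_t|^r\le(\bE\langle M\rangle_t)^{r/2}$ and $\bE|D_t|^r\le(\bE D_t^2)^{r/2}$ finish as before.
\end{itemize}
The paper's score-based argument does not run into this, because BDG applies to $\Delta^{N,\theta}$ for any $r\ge 1$ and its bracket is bounded by Poisson counts.
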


\begin{lemma}\label{lem_mom_lklh}
	Suppose $\varphi \equiv 0$
	and Assumptions~\ref{ass:theta} to \ref{ass_non_deg} and \ref{ass_any_moment}.
	Then, for 
	any $r>0$, there exists $C>0$
	such that the following holds 
	for any $\ts\in \Theta$ 
	and any $\hDev$ such that $\theta^*+ \frac{\hDev}{\sqrt{N}} \in \Theta$:
	\begin{equation}
		\bE_{\Pr^N_{\theta^*}}\Big[\exp\Big(\frac{\lambda_{\theta^*}^N[\hDev]}{2} \Big)\Big]
		\le C\, \|\hDev\|^{-r}\,.
		\label{eq_prop_bound_L}
	\end{equation}	
\end{lemma}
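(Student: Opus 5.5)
The plan is to separate $\exp(\lambda^N_{\ts}[\hDev]/2)$ into an exponential supermartingale factor and a deterministic-looking "Hellinger" factor. Then control that factor through the Wasserstein deviation inequality of Lemma~\ref{lem_moment_ctrl}, which is where $\varphi\equiv 0$ and Assumption~\ref{ass_any_moment} enter.

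\textbf{Step 1: Hellinger factorisation.} Fix $\theta=\ts+\hDev/\sqrt N$ and write $\rho=f_\theta/f_{\ts}$. From \eqref{eq_def_LN},
\begin{equation*}
\tfrac12\log L_t^{\theta/\ts}=\iint \log\sqrt{\rho}\;\Rd J^{N,\ts}-\tfrac12\iint(\rho-1)\,\Rd\varUpsilon^{N,\ts}.
\end{equation*}
I will use the algebraic identity $\tfrac12(\rho-1)-(\sqrt\rho-1)=\tfrac12(\sqrt\rho-1)^2$. It gives
\begin{equation*}
\exp\Big(\tfrac{\lambda^N_{\ts}[\hDev]}{2}\Big)=\mathcal E^N_t\cdot \exp\Big(-\tfrac N2 H_N(\theta)\Big),
\end{equation*}
with the following two factors:
\begin{itemize}
\item $\mathcal E^N_t=\exp\big(\iint\log\sqrt\rho\,\Rd J^{N,\ts}-\iint(\sqrt\rho-1)\,\Rd\varUpsilon^{N,\ts}\big)$ is a positive local martingale, hence a supermartingale with $\bE[\mathcal E^N_t]\le 1$;
\item $H_N(\theta)=\int_0^t\mu^{N,\ts}_s\big[(\sqrt{f_\theta}-\sqrt{f_{\ts}})^2\big]\Rd s$.
\end{itemize}
By Cauchy--Schwarz, $\bE[\exp(\lambda/2)]\le \bE[L_t^{\theta/\ts}]^{1/2}\le 1$. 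Hence the claimed bound is trivial for $\|\hDev\|\le 1$, and I only need to treat large $\|\hDev\|$.

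\textbf{Step 2: quadratic lower bound for the limit.} Let $H(\theta)=\int_0^t\bar\mu^{\ts}_s[(\sqrt{f_\theta}-\sqrt{f_{\ts}})^2]\Rd s$. I want $H(\theta)\ge c\|\theta-\ts\|^2$ uniformly in $\theta,\ts\in\Theta$.
\begin{itemize}
\item Near the diagonal, a Taylor expansion gives $H(\theta)=\tfrac14(\theta-\ts)\eT I_t^{\ts}(\theta-\ts)+o(\|\theta-\ts\|^2)$. Remark~\ref{rem:unif-nondegen} then provides a uniform positive constant.
\item Away from the diagonal, $H>0$ because $(\sqrt a-\sqrt b)^2\ge |a-b|^2/(4B_0)$ and by Assumption~\ref{ass_non_deg} together with Remark~\ref{rem_non_deg}.
\item I will conclude by compactness of $\Theta\times\Theta$, using continuity of $(\theta,\ts)\mapsto H$. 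This continuity requires continuity of $\ts\mapsto\bar\mu^{\ts}_s$, obtainable by a Gr\"onwall argument on \eqref{eq_limit} with $\varphi\equiv0$.
\end{itemize}

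\textbf{Step 3: splitting on a good event.} Consider the events
\begin{equation*}
G=\{H_N(\theta)\ge \tfrac c2\|\theta-\ts\|^2\}, \qquad G^c=\{H_N(\theta)<\tfrac c2\|\theta-\ts\|^2\}.
\end{equation*}
On $G$, the decomposition together with $\bE[\mathcal E^N_t]\le1$ gives a contribution at most $\exp(-c\|\hDev\|^2/4)$, which is $\le C\|\hDev\|^{-r}$.

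On $G^c$, Cauchy--Schwarz gives
\begin{equation*}
\bE[\exp(\lambda/2)\idc{G^c}]\le \bE[L_t^{\theta/\ts}]^{1/2}\,\Pr^{\ts}_N(G^c)^{1/2}\le \Pr^{\ts}_N(G^c)^{1/2}.
\end{equation*}

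To bound $\Pr^{\ts}_N(G^c)$, I use the function $x\mapsto(\sqrt{f_\theta}-\sqrt{f_{\ts}})^2$. It is Lipschitz with constant at most $2\|\sqrt{f_\theta}-\sqrt{f_{\ts}}\|_\infty\cdot 2\sup_\vartheta\mathrm{Lip}(\sqrt{f_\vartheta})\le K\|\theta-\ts\|$, by Assumption~\ref{ass:dif}. Consequently
\begin{equation*}
|H_N-H|\le K\|\theta-\ts\|\int_0^t\mathcal W_1(\mu^{N,\ts}_s,\bar\mu^{\ts}_s)\,\Rd s.
\end{equation*}
Hence $G^c\subset\{\int_0^t\mathcal W_1\,\Rd s\ge c\|\theta-\ts\|/(2K)\}$.

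By Markov's inequality, Jensen's inequality in $s$, and Lemma~\ref{lem_moment_ctrl} with exponent $q=2r$,
\begin{equation*}
\Pr^{\ts}_N(G^c)\le C\,\|\theta-\ts\|^{-q}N^{-q/2}=C\|\hDev\|^{-2r}.
\end{equation*}
This yields the bound $C\|\hDev\|^{-r}$ on $G^c$ as well.

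\textbf{Main obstacle.} The delicate step is the uniformity in $\ts$, in two places. First, the constant in Lemma~\ref{lem_moment_ctrl} must not depend on $\ts$; I would check that it depends only on moment bounds of $\bar\mu^{\ts}_s$, which are uniform under Assumption~\ref{ass_any_moment} and the uniform bounds $B_0, B_L$. Second, Step~2 needs the joint continuity in $\ts$ of the limit law. The Wasserstein deviation bound itself is taken from Lemma~\ref{lem_moment_ctrl}; that is precisely the ingredient available only when $\varphi\equiv0$.
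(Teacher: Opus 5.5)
Your argument is correct, and it takes a genuinely different route from the paper's.

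\textbf{The paper's route.} It applies H\"older's inequality with exponents $3/2$ and $3$ against the supermartingale $\exp(\frac34 P_t^{N,\theta/\ts}-3G_t^{N,\theta/\ts})$. This reduces the claim to bounding $\bE[\exp(\mathcal K_t^{N,\theta/\ts})]^{1/3}$, where $\mathrm k(r)=2r^{3/4}-\frac32 r-\frac12\le 0$. It then splits into two regimes:
\begin{enumerate}[(i)]
\item a local regime $\|\theta-\ts\|<\hat\eps_{\mathrm k}$, handled by a Taylor expansion of $\mathrm k$ and a deviation bound for $I_t^{N,\ts}-I_t^{\ts}$;
\item a global regime, handled by the uniform positivity of $\mathfrak K_t^{\theta/\ts}$.
\end{enumerate}
In each regime a rare event with a fixed threshold is bounded by $CN^{-r/2}$, which is converted into $\|\hDev\|^{-r}$ using $\|\hDev\|\lesssim\sqrt N$.

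\textbf{Your route.} Your exact Hellinger factorisation avoids the H\"older loss. It also moves the whole local/global dichotomy into the deterministic bound $H\ge c\|\theta-\ts\|^2$. What remains is a single probabilistic estimate. There, the Lipschitz constant $O(\|\theta-\ts\|)$ of $(\sqrt{f_\theta}-\sqrt{f_{\ts}})^2$, set against the $\|\theta-\ts\|^2$ gap, gives a threshold proportional to $\|\hDev\|/\sqrt N$. Markov's inequality and Lemma~\ref{lem_moment_ctrl} then give $\|\hDev\|^{-2r}$ directly.

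\textbf{What each buys.} A concrete benefit of your route is that it only needs $x\mapsto f_\vartheta(x)$ to be Lipschitz. The paper's local step instead bounds $\mu^{N,\ts}_s[(\dot f_{\ts})^{\otimes 2}/f_{\ts}]-\bar\mu^{\ts}_s[(\dot f_{\ts})^{\otimes 2}/f_{\ts}]$ by $B_1^2\,\mathcal W_1(\mu^{N,\ts}_s,\bar\mu^{\ts}_s)$. That tacitly requires $x\mapsto \dot f_{\ts}(x)$ to be Lipschitz, which is not among the stated assumptions. Both proofs rely to the same extent on the continuity of $\ts\mapsto\bar\mu^{\ts}$ and on Remark~\ref{rem:unif-nondegen}.

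\textbf{A small point.} Lemma~\ref{lem_moment_ctrl} is stated only for exponents $>2$, so taking $q=2r$ covers $r>1$. Smaller $r$ follow from the case $r=2$ when $\|\hDev\|\ge 1$, together with your trivial bound when $\|\hDev\|\le 1$.
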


At the moment, we can only proceed with \eqref{eq_prop_bound_L} in the case where $\varphi \equiv 0$, which is thus assumed in this Section~\ref{sec_IH}.
This assumption appears typically needed for the following quantitative result of propagation of chaos.
\begin{lemma}\label{lem_moment_ctrl}
	Suppose $\varphi \equiv 0$
	and Assumptions~\ref{ass:finite} to \ref{ass:dif} and \ref{ass_any_moment}.
	For any $r>2$,  the following holds 
	for some $C>0$ that only depends on $r$, $t$  and 
	$\bE[|X_0^{1}|^{2r}]$
	\begin{equation*}
		\sup_{s\in [0, t]}
		\bE_{\Pr^N_\ts}[\mathcal W_1(\mu^{N, \ts}_s, \bar \mu^ {\ts}_s)^r]
		\le \frac{C}{N^{r/2}}.
	\end{equation*}
\end{lemma}
Let $(\bar X^{i, N, \ts}_{[0, t]})_{i\in \II{1, N}}$ be a family of replicates of the Mc Kean-Vlasov dynamics, with initial condition specified by $\bar X^{i, N, \ts}_0 = X^{i, N, \ts}_0=X^{i}_0$, and for any $s\in  [0, t]$:
\begin{equation*}
	\bar \mu_s^{N, \ts}(\Rd x)
	:= \frac1N \sum_{i = 1}^N \delta_{\bar X^{i, N, \ts}_s}(\Rd x)\,.
\end{equation*}
\begin{remark}
	The idea is to exploit a diagonal coupling with the same Poisson measure for the $j$-th particle
	and control together $|X^{i, N, \ts}_s - \bar X^{i, N, \ts}_s|^r$ and $\mathcal W_1(\mu^{N, \ts}_s, \bar \mu_s^{N, \ts})^r$
	by means of the Gr\"onwall Lemma.
	More precisely, 
	the Wasserstein norm appears as the natural quantity to bound the
	predictible part of the interaction term,
	together with a correction term which we handle with concentration inequalities taken from \cite{FG15}, 
	while the martingale terms are controlled
	by the Burckholder-Davis-Gundy inequality.	
	This last control fails to produce satisfying bounds
	if $\varphi \neq 0$,
	due to the dependency of the jump rate 
	on the state of the particle.
	We conjecture that such a strong result actually does not hold 
	in this case where  $\varphi \neq 0$.
\end{remark}

\begin{proof}[Proof of Lemma~\ref{lem_reg_lklh}]
	We start the proof 
	by noticing that
	\begin{equation}
		\label{eq_prop_drv_lbd}
		\begin{split}
			\bE_{\Pr^N_{\theta^*}}\Big[\Big|\exp\Big(\frac{\lambda_{\theta^*}^N[\hDev]}{r} \Big) - \exp\Big(\frac{\lambda_{\theta^*}^N[\mathrm \hDev]}{r} \Big)\Big|^r\Big]
			&= \bE_{\Pr^N_{\theta^*}}\Big[\|\hDev - \mathrm \hDev\|^r\cdot
			\Big\| \int_0^1	\partial_{\hDev}\exp\Big(\frac{\lambda_{\theta^*}^N[.]}{r} \Big)[\hDev_\xi]\, \Rd \xi \Big\|^r\Big]
			\\&\le  \|\hDev - \mathrm \hDev\|^r\,
			\int_0^1	\bE_{\Pr^N_{\theta^*}}\Big[\Big\|\partial_{\hDev}\exp\Big(\frac{\lambda_{\theta^*}^N[.]}{r} \Big)[\hDev_\xi]\Big\|^r\Big]\, \Rd \xi,
		\end{split}
	\end{equation}
	thanks to  the Jensen inequality and the Fubini theorem,
	where $\hDev_\xi = \hDev + \xi\, (\mathrm \hDev -  \hDev)$. Using the chain rule and the relation between $\lambda^N$ and $\ell^N$ we get 
	\begin{equation*}
		\Big\|\partial_{\hDev}\exp\Big(\frac{\lambda_{\theta^*}^N[.]}{r} \Big)[\hDev_\xi]\Big\|^r
		= \exp\bigg(\ell^N[\theta_\xi] \bigg)\cdot
		\bigg(\frac{1}{r\,\sqrt{N}}\bigg)^r\,
		\|\partial_{\theta} \ell^N[\theta_\xi]\|^r\,,
	\end{equation*}
	where $\theta_\xi = \ts + \frac{\hDev_\xi}{\sqrt{N}}$.
	For $\theta$, we recall also that $\exp\circ\ell^N[\theta] 
	= L^{\theta/\theta^*}_t$
	is the density of  $\Pr_N^{\theta}$ with respect to $\Pr_N^{\ts}$ on the time-interval $[0, t]$, 
	so that for any $\xi\in [0, 1]$:
	\begin{equation}\label{eq_prop_dec_lN}
		\bE_{\Pr_N^{\ts}}\Big[\Big\|\partial_{\hDev}\exp\Big(
		\frac{\lambda_{\theta^*}^N[.]}{r} \Big)[\hDev_\xi]\Big\|^r\Big]
		=  \bigg(\frac{1}{r\,\sqrt{N}}\bigg)^r\,
		\bE_{\Pr_N^{\theta_\xi}}\bigg[\|\partial_{\theta} \ell^N[\theta_\xi]\|^r\bigg]\,.
	\end{equation}
	Under  $\Pr_N^{\theta}$,  using \eqref{eq_def_JN} and \eqref{eq_def_JNcomp}, we identify
	the law of $\partial_{\theta} \ell^N[\theta]$
	from \eqref{eq_def_LN}
	as the one of:
	\begin{equation*}
		\int_{ [0, t ] \times \bR} \frac{ \dot f_{\theta}(x) }{  f_{\theta}(x) }\; J^{N, \theta} (\Rd s, \Rd x) 
		- \int_0^t \frac{ \dot f_{\theta}(x)  }{ f_{\theta^*}(x)}\,   \varUpsilon^{N, \theta/\ts} (\Rd s, \Rd x)\,,
	\end{equation*}
	where $\varUpsilon^{N, \theta/\ts}(\Rd s, \Rd x)
	= N\, f_{\ts}(x)\,
	\mu_s^{N, \theta}(\Rd x)\, \Rd s$.  Therefore
	$N^{-1/2}\partial_{\theta} \ell^N[\theta]$ has  the law of $\ \Delta^{N,\theta}_t$
	where $(\Delta^{N,\theta}_s)_{s\in [0, t]}$
	is a martingale defined as follows, in analogy with \eqref{eq_def_MN1},
	\begin{equation*}
		\begin{split}
			\Delta^{N,\theta}_t
			&= \frac1{\sqrt{N}}\iint_{[0,t]\times \bR}\frac{\dot{f}_{\theta} }{{f}_{\theta}} (x)\;
			\wtd J^{N,\theta}  ( \Rd s, \Rd x)
			\\&=\frac1{\sqrt{N}}\sum_{j=1}^N 
			\iint_{[0,t]\times \bR^+}
			\frac{\dot{f}_{\theta} }{{f}_{\theta}} \big(X^{j, N,\theta}_{s- }\big) 
			\idc{ z \le f_{\theta} \big( X^{j, N,\theta}_{s- }\big)} 
			\wtd{ \bar \pi}^j (\Rd s, \Rd z )\,.
		\end{split}
	\end{equation*}
	We can thus apply the Burkholder-Davis-Gundy inequality to the martingale $\Delta^{N,\theta}_t$
	with $r\ge 2$,   see	\cite{kallenberg}, Theorem 20.12, 
	then the Jensen inequality,
	and deduce that:
	\begin{equation*}
		\begin{split}
			&\bE_{\Pr_N^{\theta}}\Big[|N^{-1/2}\partial_{\theta} \ell^N[\theta]|^r\Big]
			=\bE_{\Pr_N^{\theta}}\Big[|\Delta^{N,\theta}_t|^r\Big]
			\\&\quad \leq
			\bE_{\Pr_N^{\theta}}\Big[[\Delta^{N,\theta}_t]^{r/2}\Big]
			= \bE_{\Pr_N^{\theta}}\Big[\Big|\frac 1N \sum_{j=1}^N 
			\iint_{[0,t]\times \bR^+}
			\Big (\frac{\dot{f}_{\theta} }{{f}_{\theta}} \big(X^{j, N,\theta}_{s- }\big) \Big)^2
			\idc{ z \le f_{\theta} \big( X^{j, N,\theta}_{s- }\big)} 
			{ \bar \pi}^j (\Rd s, \Rd z )   \Big|^{r/2}\Big] 
			\\&\quad \leq  B_1^r\; \bE_{\Pr_N^{\theta}}\Big[(\bar\pi^1([0,t]\times[0,B_0])^{r/2}\Big]=C(t,B_0,B_1)<\infty.
		\end{split}
	\end{equation*}
	Recalling in addition \eqref{eq_prop_drv_lbd} and \eqref{eq_prop_dec_lN},
	we conclude Lemma~\ref{lem_reg_lklh} 
	with $C= r^{-r}\cdot C_M^{r/2}$.
\end{proof}

\begin{proof}[Proof of Lemma~\ref{lem_moment_ctrl}]
	Recall that  $(\bar X^{i, N, \ts}_{[0, t]})_{i\in \II{1, N}}$ 
	denotes a family of replicates of the Mc Kean-Vlasov dynamics, 
	with initial condition specified 
	by $\bar X^{i, N, \ts}_0 = X^{i, N, \ts}_0=X^{i, \ts}_0$, 
	and for any $s\in  [0, t]$:
	\begin{equation*}
		\bar \mu_s^{N, \ts}(\Rd x)
		:= \frac1N \sum_{i = 1}^N \delta_{\bar X^{i, N, \ts}_s}(\Rd x)\,.
	\end{equation*}
	Since $$\mathcal W_1(\mu^{N, \ts}_s, \bar \mu_s^\ts)
	\le \mathcal W_1(\mu^{N, \ts}_s, \bar\mu^{N, \ts}_s) 
	+  \mathcal W_1(\bar\mu^{N, \ts}_s, \bar \mu^{\ts}_s),$$
	it holds that
	\begin{equation}\label{eq_ctrl_W1}
		2^{-r+1}\bE_{\Pr^N_\ts}[\mathcal W_1(\mu^{N, \ts}_s, \bar \mu_s^{\ts})^r]
		\le \bE_{\Pr^N_\ts}[\mathcal W_1(\mu^{N, \ts}_s, \bar\mu^{N, \ts}_s)^r]
		+ \bE_{\Pr^N_\ts}[\mathcal W_1(\bar\mu^{N, \ts}_s, \bar \mu^{\ts}_s)^r]\,.
	\end{equation}
	Recall that $(\bar X^{i, N, \ts}_{[0, t]})_{i\in \II{1, N}}$
	follows the Mc-Kean Vlasov dynamics driven by $\bar\mu^{\ts}_{[0, t]}$
	while   $\bar X^{i, N, \ts}_0$ are i.i.d. samples 
	with distribution $\bar \mu^{\ts}_0$.
	Thus, for any $s\in [0, t]$, 
	$\bar X^{i, N, \ts}_s$ are i.i.d. samples 
	with distribution $\bar \mu^{\ts}_s$.
	
	Let us denote by  $M_r(\ts)$ the upper-bound 
	on the moment of order $2r$ of $\bar \mu^{\ts}_s:$ 
	$$ M_r(\ts):=\bE\Big[\sup_{s\leq t}|\bar X^{1,N,\ts}_s|^{2r}\Big].$$
	Since
	\begin{align*}
		\sup_{s\leq t}|\bar X^{1,N,\ts}_s|^{2r}
		&\leq 
		\big(2^{2r-1}+(16t)^{2r-1}\|b\|_{Lip}^{2r}\big)\,|X_0^{1}|^{2r}+
		(8t)^{2r}\|b\|_{Lip}^{2r}\int_0^T\sup_{u\leq s}|\bar X^{1,N,\ts}_u|^{2r} \Rd s
		\\ &\quad +t^{2r}8^{2r-1}\big(|b(X_0^1)|^{2r}+|m|\sup_{\ts\in\Theta}\|f_{\ts}\|_{\infty})^{2r}\big) .
	\end{align*}
	Using Gr\"onwall lemma, for all $\ts\in\Theta$  we deduce that 
	\begin{equation}\label{eq_moments}
		M_r (\ts)
		\leq C\Big(t,r,m,\sup_{\theta\in\Theta}\|f_{\theta}\|_{\infty},
		\|b\|_{Lip},\bE\Big[| X^1_0|^{2r}\Big]\Big)
		<\infty
	\end{equation} 
	and since the previous bound doesn't depend on $\ts,$ it is uniform: 
	$\sup_{\ts\in\Theta} M_r(\ts)\leq C.$
	\

	We can therefore exploit \cite[Theorem 2]{FG15} and deduce that there exists $C_c$ depending only on $r$ and $M_r$ such that
	\begin{equation}\label{eq_FG_iid_moment}
		\sup_{s\in [0, t]} \bE_{\Pr^N_\ts}[\mathcal W_1(\bar\mu^{N, \ts}_s, \bar \mu^{\ts}_s)^r]
		\le  \frac{C_c}{N^{r/2}}\,. 
	\end{equation}
	
	Our aim now is to show a similar bound 
	for $\mathcal W_1(\mu^{N, \ts}_s, \bar\mu^{N, \ts}_s)$ 
	in \eqref{eq_ctrl_W1}.   
	Using  the diagonal coupling and the Jensen inequality, we deduce that
	$$\mathcal W_1(\mu^{N, \ts}_s, \bar\mu^{N, \ts}_s)
	\leq \frac1N \sum_{i \le N} 
	|\bar X^{i, N, \ts}_s - X^{i, N, \ts}_s|,$$
	and
	\begin{equation}\label{eq_prop_W1_Diag_cpl}
		\bE_{\Pr^N_\ts}[\mathcal W_1(\mu^{N, \ts}_s, \bar\mu^{N, \ts}_s)^r]
		\le \frac1N \sum_{i \le N} \bE_{\Pr^N_\ts}[| X^{i, N, \ts}_s - \bar X^{i, N, \ts}_s|^r]\,.
	\end{equation}
	
	There exists $C_d>0$ depending only on $r$ and $t$ such that for any $i\in \II{1, N}$
	\begin{equation*}
		\frac{1}{C_d}|X^{i, N, \ts}_s - \bar X^{i, N, \ts}_s|^r
		\le \int_0^s (\mathcal B^{i, N, \ts}_v)^r \Rd v 
		+ (\mathcal D^{i, N, \ts}_s)^r
		+ |\mathcal N^{i, N, \ts}_s|^r\,,
	\end{equation*}
	where
	\begin{equation*}
		\mathcal B^{i, N, \ts}_s = |b(X^{i, N, \ts}_s) - b(\bar X^{i, N, \ts}_s) |+ |\sum_{j= 1}^N \int_{\bR_+} \frac{u}{N} f_\ts(X^{j, N, \ts}_s) \nu(\Rd u) - m\, \,\bar \mu^{\ts}_s[f_{\ts}]|,
	\end{equation*}
	so that
	\begin{equation}\label{eq_prop_cB}
		2^{-r+1} (\mathcal B^{i, N, \ts}_s)^r
		\le \| b \|_{Lip}^r
		\cdot |X^{i, N, \ts}_s - \bar X^{i, N, \ts}_s|^r+ m^r\,\| f_\ts \|_{Lip}^r\, \;W_1(\mu^{N, \ts}_s, \bar \mu^{\ts}_s)^r\,,
	\end{equation}
	with $\|.\|_{Lip}$ denoting the Lipschitz norm, while 
	\begin{equation}\label{eq_def_cD}
		\begin{split}
			\mathcal D^{i, N, \ts}_s
			&= \big|\int_0^s \int_{\bR} \frac{u}{N} f_\ts(X^{i, N, \ts}_v) \nu(\Rd u)\, \Rd v\big|
			\le \frac{m\,s\, B_0}{N}\,
		\end{split}
	\end{equation}
	and  $\mathcal N^{i, N, \ts}_{[0, t]}$ is a martingale starting from 0 defined, for any $s\in [0, t]$, as follows
	\begin{equation*}
		\Rd \mathcal N^{i, N, \ts}_s
		= \sum_{j\neq i}  \int_{\bR_+} \int_{\bR} \frac{u}{N} \idc{z\le f_{\ts} ( X_{s-}^{j, N,\ts } ) }  \wtd \pi^j (\Rd s, \Rd z, \Rd u )
		\,.
	\end{equation*}
	Thanks to the Burkholder-Davis-Gundy inequality 
	see	\cite{kallenberg}, Theorem 20.12,
	then the Jensen inequality,
	\begin{equation}\label{eq_BDG_ineq}
		\begin{split}
			\bE_{\Pr^N_\ts}\Big[ \sup_{s\in [0, t]}|\mathcal N^{i, N, \ts}_s|^r\Big]
			&\le \bE_{\Pr^N_\ts}\Bigg[\Bigg(\frac 1N\sum_{j\neq i}\int_{[0,t]}  \int_{\bR_+} \int_{\bR} 
			\frac{u^2}{\sqrt N} \idc{z\le f_{\ts} ( X_{s-}^{j, N,\ts } ) } 
			\pi^j (\Rd s, \Rd z, \Rd u ) \Bigg)^{r/2}\Bigg]
			\\ &\le N^{-r/2}\,
			\bE_{\Pr^N_\ts}\Bigg[\frac 1N\sum_{j\neq i}\Bigg (\int_{[0,t]}  \int_{\bR_+} \int_{\bR} 
			{u^2} \idc{z\le f_{\ts} ( X_{s-}^{j, N,\ts } ) }  
			\pi^j (\Rd s, \Rd z, \Rd u ) \Bigg)^{r/2}\Bigg]
			\\ &\le N^{-r/2}\,
			\bE_{\Pr^N_\ts}\Bigg[\Bigg(\int_{[0,t]}  \int_{\bR_+} \int_{\bR} 
			{u^2} \idc{z\le f_{\ts} ( X_{s-}^{j, N,\ts } ) }  
			\pi^1 (\Rd s, \Rd z, \Rd u ) \Bigg)^{r/2}\Bigg]
			\\ &\le N^{-r/2}\,
			\bE_{\Pr^N_\ts}\Bigg[\Bigg(\int_{[0,t]}  \int_{[0,B_0]} \int_{\bR} 
			{u^2}   \pi^1 (\Rd s, \Rd z, \Rd u ) \Bigg)^{r/2}\Bigg]
			\le C{N^{-r/2}},
			\,
		\end{split}
	\end{equation}
	as the compound Poisson process in the last line 
	admits the moments of any order under Assumption~\ref{ass_any_moment} $(ii).$

	We can thus justify a  Gr\"onwall   approach
	on the quantity 
	\begin{equation*}
		\mathcal G^{N, \ts}_s
		=\frac1N\sum_{i \le N} \bE_{\Pr^N_\ts}[|\bar X^{i, N, \ts}_{s} - X^{i, N, \ts}_{s}|^r],
	\end{equation*}
	as $s$ varies from $0$ to $t$.
	The Wasserstein term in \eqref{eq_prop_cB}
	is to be upper-bounded as in our initial steps, 
	by recalling \eqref{eq_ctrl_W1},
	\eqref{eq_prop_W1_Diag_cpl} 
	and 
	\eqref{eq_FG_iid_moment}.
	The martingale term 
	is to be treated 
	thanks to \eqref{eq_BDG_ineq}
	together with the estimates given in \eqref{eq_def_cD}
	and the one associated with \eqref{eq_FG_iid_moment}.
	The leading error term is the one of the martingale, of order $N^{-r/2}$.
	Putting all these estimates together,
	for any sufficiently large $N$, we arrive at the following upper-bound:
	\begin{equation*}
		\mathcal G^{N, \ts}_s
		\le \frac{2 C_d\,C_\mathcal{N}}{N^{r/2}}
		+ C_d\,C_{\mathcal G}\, \int_0^s \mathcal G^{N, \ts}_v\, \Rd v\,,
	\end{equation*}
	where
	$C_{\mathcal G} = \| b \|_{Lip}^r
	+ 4^{r-1}m^r\,  B_L^r$.
	The Gr\"onwall Lemma implies that there exists $C>0$ such that $\sup_{s\in [0, t]} \mathcal G^{N, \ts}_s \le C/N^{r/2}$.
	The proof of Lemma~\ref{lem_moment_ctrl} is finally concluded with \eqref{eq_ctrl_W1} and \eqref{eq_FG_iid_moment}.
\end{proof}

We can next proceed with the proof of  \eqref{eq_prop_bound_L}.
\begin{proof}[Proof of Lemma~\ref{lem_mom_lklh}]
	
	Let $\ts\in\Theta$  be fixed and $h\in\bR^d$ 
	be such that $\theta=\ts+   \hDev/{\sqrt N}\in\Theta.$  
	Recall the notation
	$\lambda_{\theta^*}^N[\hDev]
	=\ell^N[\theta^*+ {\hDev}/{\sqrt{N}}]
	=\log L^{(\theta^*+ \hDev/\sqrt{N})/\theta^*}_t .$
	
	From \eqref{eq_def_L} we know that
	\begin{equation*}
		\ell^N[\theta]
		= P_{t}^{N, \theta/\ts} - R_{t}^{N, \theta/\ts},
	\end{equation*}
	where, for any $t\ge 0$,
	\begin{equation*}
		P_{t}^{N, \theta/\ts}
		=  \sum_{j=1}^N 
		\iint_{[0,t]\times \bR^+}
		\log\Big(\frac{f_{\theta} }{{f}_{\ts}}\Big) \big(X^{j, N,\ts}_{s- }\big) 
		\idc{ z \le f_{\ts} \big( X^{j, N,\ts}_{s- }\big)} 
		\bar \pi^j (\Rd s, \Rd z )\,
	\end{equation*}
	is the Poissonian term, while
	\begin{equation*}
		\begin{split}
			R_{t}^{N, \theta/\ts}
			&=  N \int_{[0,t]}\int_{\bR^+} ( f_{\theta  } - f_{\ts} ) \big (x\big)\,\mu^{N, \ts}_s(\Rd x)\, \Rd s
			\\&= \sum_{j= 1}^N \int_0^t ( f_{\theta  } - f_{\ts} ) \big (X^{j, N,\ts}_s\big)\, \Rd s
		\end{split}
	\end{equation*}
	refers to the jump rate.
	As we want to use the H\"older inequality to combine 
	the term proportional to $P_{t}^{N, \theta/\ts}$
	with a compensation that makes 
	the exponential a martingale,
	we introduce the following quantity:
	\begin{equation*}
		\begin{split}
			G_{t}^{N, \theta/\ts}
			&= \frac{N}3 \int_{[0,t]}\int_{\bR^+}
			\Big[\Big(\frac{f_{\theta} }{{f}_{\ts}}\Big)^{3/4}
			-1 \Big](x)\,  f_{\ts}(x)\, \mu^{N, \ts}_s(\Rd x)\, \Rd s
			\\&=  \sum_{j= 1}^N\frac{1}3 \int_{[0,t]}
			\Big[\Big(\frac{f_{\theta} }{{f}_{\ts}}\Big)^{3/4}
			-1 \Big](X^{j, N,\ts}_s)\,  f_{\ts}(X^{j, N,\ts}_s)\, \Rd s\,.
		\end{split}
	\end{equation*}
	It is chosen such that
	\begin{equation*}
		Z_{t}^{N, \theta/\ts}
		= \exp\Big[\frac{3}{4} P_{t}^{N, \theta/\ts} - 3 G_{t}^{N, \theta/\ts}\Big]\,
	\end{equation*}
	defines an exponential local martingale under $\Pr^N_{\ts}$, 
	and thus  a supermartingale, 
	in line with \cite[Section 5.2]{Ap09}.Therefore
	$$\bE_{\Pr^N_{\ts}}\Big[ Z_{t}^{N, \theta/\ts}\Big]
	\leq 1.$$
	
	By the H\"older inequality
	\begin{equation}\label{eq_prop_Ktheta}
		\begin{split}
			\bE_{\Pr^N_{\theta^*}}\Big[\exp\Big(\frac{\ell^N[\theta]}{2} \Big)\Big]
			&\le \bE_{\Pr^N_{\theta^*}}\Big[Z_{t}^{N, \theta/\ts}\Big]^{2/3}\cdot
			\bE_{\Pr^N_{\theta^*}}\Big[\exp\Big(
			\mathcal K_{t}^{N, \theta/\ts} \Big)\Big]^{1/3}
			\\&\le	\bE_{\Pr^N_{\theta^*}}\Big[\exp\Big(
			\mathcal K_{t}^{N, \theta/\ts} \Big)\Big]^{1/3}\,,
		\end{split}
	\end{equation}
	where 
	\begin{equation}\label{eq_def_Ktheta}
		\begin{split}
			\mathcal K_{t}^{N, \theta/\ts} 
			&	= 3\times 2\,G_{t}^{N, \theta/\ts} 
			- \frac{3}{2} R_{t}^{N, \theta/\ts}
			\\&	= N \int_0^t\int_{\bR^+} \mathrm{k}\Big(\frac{f_{\theta} }{{f}_{\ts}}
			\Big) \big(x\big)\,f_{\ts}(x)\,\mu^{N, \ts}_s(\Rd x)\, \Rd s\,,
		\end{split}
	\end{equation}
	and $\mathrm{k}(r) = 2 r^{3/4} - \frac{3}{2} r - \frac12$.
	Note that $\mathrm{k}(1)= \mathrm{k}'(1) = 0$, $\mathrm{k}''(1) = \frac{-3}{8}$. 
	$\mathrm{k}$ is also non-positive, hence
	\begin{equation}\label{eq_prop_Hneg}
		\exp\Big(\mathcal K_{t}^{N, \theta/\ts} \Big)\le 1\,.
	\end{equation}
	Note also that
	$$\mathrm{k}(r)=0\;\Longleftrightarrow \; r=1,$$
	and $ \mathrm{k}''(r)\leq 0,$ therefore  $\mathrm{k}$ is concave.
	
	Due to the second order Taylor expansion of $\mathrm{k}$ around 1,
	let $\eps_{\mathrm{k}}>0$ be such that $\mathrm{k}(1+r) \le \frac{-3r^2}{20}$
	for any $|r|\le \eps_{\mathrm{k}}$.
	Thanks to Assumption \ref{ass:deriv},
	then Assumption \ref{ass:dif}
	for the boundedness of $\dot f_{\ts} /f_{\ts}$,
	we deduce that there exists $\hat C, \hat \eps_{\mathrm{k}}>0$
	such that the following holds
	for any 
	$\theta \in B(\ts,  \hat \eps_{\mathrm{k}} )$
	and any $x\in \bR_+$:
	\begin{equation*}
		\mathrm{k}\Big(\frac{f_{\theta} }{{f}_{\ts}}\Big)(x)
		\le -\frac{3}{20}\Big((\theta - \ts)\eT \frac{\dot f_{\ts} }{f_{\ts}}(x)\Big)^2
		+ \hat C\cdot \|\theta - \ts\|^3\,.
	\end{equation*}
	Thus, for any such $\theta$:
	\begin{equation}\label{eq_prop_cK_DL}
		\mathcal K_{t}^{N, \theta/\ts} 
		\le -\frac{3N}{20} (\theta - \ts)\eT I_t^{N, \ts} (\theta - \ts)
		+ N\,t\,\hat C\, \|\theta - \ts\|^3\,,
	\end{equation}
	where $I_t^{N, \ts}$ is defined in \eqref{eq_def_INts}.
	Note that for a fixed value of $\hDev$, 
	using Proposition \ref{prop_aux}, 
	with $I_t^{\ts}$ given by \eqref{eq_def_I}, 
	we deduce the following limiting upper-bound:
	\begin{equation*}
		\limsup_{N\to \infty} \mathcal K_{t}^{N, (\ts+\hDev/\sqrt{N})/\ts} 
		\le  -\frac{3}{20} \, \hDev\eT I_t^{\ts} \hDev\,.
	\end{equation*}
	
	Let
	$$\mathcal H^N_{loc} = \Big\{\mathcal K_{t}^{N, (\ts+\hDev/\sqrt{N})/\ts} 
	\leq -\frac 1{ 10}\hDev\eT I_t^{\ts}\hDev \Big\} .$$
	
	On the complementary of the event $\mathcal H^N_{loc}$, because of $\eqref{eq_prop_cK_DL}$:
	\begin{equation*}
		\begin{split}
			&6 \hDev\eT I_t^{N, \ts}\hDev
			< 4\hDev\eT I_t^{\ts}\hDev
			+ 40 t\,\hat C\, \hat \eps_k\, \|\hDev\|^2,
		\end{split}
	\end{equation*}
	By possibly restricting the size of $\hat \eps_k$ (independently of $\hDev$),
	since  $(I_t^{\theta})_{\theta}$ is  uniformly  non-degenerated as noted in Remark~\ref{rem:unif-nondegen},
	we may assume that the last term in the r.h.s. is smaller than $\hDev\eT I_t^{\ts}\hDev$.
	So we  deduce that the complementary of $\mathcal H^N_{loc}$
	is included in the following rare event:
	\begin{equation*}
		\mathcal I^{N, \ts}
		=\Big\{ \hDev\eT I_t^{N, \ts}\hDev \le \frac{5}{6} \hDev\eT I_t^{\ts}\hDev \Big\}\,. 
	\end{equation*}
	Using Remark \ref{rem:unif-nondegen},
	$(I_t^{\theta})_{\theta}$ is  uniformly  non-degenerated, 
	therefore we can write, for some positives constants  $c,C,C_0$,
	\begin{equation}\label{eq_decomp}
		\begin{split}
			\bE_{\Pr^N_{\theta^*}}\Big[\exp\Big(
			&\mathcal K_{t}^{N, \theta/\ts} \Big)\Big]
			\leq  \bE_{\Pr^N_{\theta^*}}\Big[\exp\Big(
			\mathcal K_{t}^{N, \theta/\ts} \Big)\mathbf{1}_{\mathcal H^N_{ loc  }}\Big]+
			\bE_{\Pr^N_{\theta^*}}\Big[\mathbf{1}_{(\mathcal H^N_{ loc  })^c}\Big] \\
			&\leq  \exp\Big(-\frac{1}{10} \,\hDev\eT I_t^{\ts} \hDev\Big)
			+\Pr^N_{\theta^*}(\mathcal I^{N, \ts})
			\leq \exp (-c\|\hDev\|^2)+\Pr^N_{\theta^*}(\mathcal I^{N, \ts})
			\leq C\, \|\hDev\|^{-r}+C_0\, N^{-r/2}\,.
		\end{split}
	\end{equation}

	Indeed, under Assumption \ref{ass_non_deg}, using the fact that the moment of order $2 r$ of $\bar \mu^{\ts}_s$ is upper-bounded (see \eqref{eq_moments})
	uniformly in $\ts$ and $s\in [0, t]$, as a consequence of Lemma~\ref{lem_moment_ctrl} we can write
	\begin{equation}\label{eq_dev}
		\begin{split}
			\Pr^N_{\theta^*}(\mathcal I^{N, \ts})
			&\leq \Pr^N_{\theta^*} (
			\| I_t^{N, \ts}-I_t^{\ts}\|\geq C)
			\\&\leq \Pr^N_{\theta^*}\Big(\int_0^t \| \mu_s^{N,\ts}\Big[\frac{(\dot{f}_{\theta^*})^{\otimes 2}}{f_{\theta^*}}
			\Big]-\bar\mu_s^{\ts}\Big[\frac{(\dot{f}_{\theta^*})^{\otimes 2}}{f_{\theta^*}}
			\Big] \|\; \Rd s\,\geq C\Big)
			\\& \leq 
			\Pr^N_{\theta^*}\Big(B_1^2\,\int_0^t \mathcal W_1(\mu^{N, \ts}_s, \bar \mu^{\ts}_s) \; \Rd s\geq C\Big) 
			\\&  \leq 
			C(t,r)\sup_{s\in [0, t]} \bE_{\Pr^N_\ts}[\mathcal W_1(\mu^{N, \ts}_s, \bar \mu^{\ts}_s)^r]
			\le \frac{C(t,r)}{N^{r/2}}\,.
		\end{split} 
	\end{equation}
	
	As a consequence of \eqref{eq_prop_Ktheta}, \eqref{eq_decomp} 
	and \eqref {eq_dev}, 
	for any $h \in B(0, \hat \eps_{\mathrm{k}}\sqrt N  ),$ 
	the claim of  Lemma \ref{lem_mom_lklh} follows.

	We consider now the case of  
	$\theta \notin B(\ts,  \hat \eps_{\mathrm{k}} ).$ 
	Due to
	Assumption~\ref{ass_non_deg}, 
	Remark~\ref{rem_non_deg}, 
	the fact that $k(r)<0 $  for any $r\neq 1$, 
	and $f_{\ts}>0$,  
	the following quantity is strictly positive: 
	\begin{equation*}
		\mathfrak K_t^{\theta/\ts}
		= -\int_0^t\int_{\bR^+} \mathrm{k}\Big(\frac{f_{\theta} }{{f}_{\ts}}
		\Big) \big(x\big)\,f_{\ts}(x)\,\bar \mu^{\ts}_s(\Rd x)\, \Rd s>0.
	\end{equation*}
	Moreover, by continuity of $\mathfrak K_t^{\theta/\ts}$ with respect with $(\theta, \ts)$, 
	there is a positive uniform lower-bound
	\begin{equation*}
		\inf_{\theta, \ts: \|\theta - \ts\| > \hat \eps_{\mathrm k}}	\mathfrak K_t^{\theta/\ts} >0.
	\end{equation*}
	Let
	\begin{equation*}
		\mathcal H^{N, \theta/\ts}_{gl} 
		= \Big\{ \mathcal K_{t}^{N, \theta/\ts} 
		\le -\frac{N}2 \mathfrak K_t^{\theta/\ts}\Big\}\,,
	\end{equation*}
	where $\mathcal K_{t}^{N, \theta/\ts} $ is defined by \eqref{eq_def_Ktheta}.
	The complementary of $\mathcal H^{N, \theta/\ts}_{gl}$
	is  included in the following rare event:
	\begin{equation*}
		\mathfrak J^{N, \theta/\ts}
		= \Big\{-\int_0^t\int_{\bR^+} \mathrm{k}\Big(\frac{f_{\theta} }{{f}_{\ts}}
		\Big) \big(x\big)\,f_{\ts}(x)\,\mu^{N,\ts}_s(\Rd x)\, \Rd s \le \frac12 \mathfrak K_t^{\theta/\ts}\Big\}\,. 
	\end{equation*}
	Indeed, we can control its probability  with the help of Lemma~\ref{lem_moment_ctrl} 
	as follows:
	\begin{align}\label{al:hgcontrol}
		\Pr_{\ts}^N\left ( \mathfrak J^{N, \theta/\ts} \right)&
		\leq  \Pr_{\ts}^N\left ( \int_0^t\int_{\bR^+} \mathrm{k}\Big(\frac{f_{\theta} }{{f}_{\ts}}
		\Big) (x)\,f_{\ts}(x)\,\big[ \mu^{N,\ts}_s-\bar \mu^{\ts}_s \big](\Rd x)\, \Rd s 
		\geq \frac 12\mathfrak K_t^{\theta/\ts}\right)\\\nonumber 
		&\leq \Pr_{\ts}^N\left (\int_0^t\mathcal W_1(\mu^{N, \ts}_s, \bar \mu^ {\ts}_s)\Rd s >  C(t)
		\right)
		\leq C(t,r)N^{-r/2},
	\end{align}
	where in the last inequality we have used Jensen inequality and Lemma~\ref{lem_moment_ctrl}, and we denoted by  $C(t)$ and $C(t,r)$ two generic positive constants depending respectively on $t$, and on $t$ and $r.$  
	Now the result easily follows from \eqref{eq_prop_Hneg} and \eqref {al:hgcontrol}:
	\begin{equation*}
		\begin{split}
			\bE_{\Pr^N_{\theta^*}}\Big[\exp\Big(
			&\mathcal K_{t}^{N, \theta/\ts} \Big)\Big]
			\leq  \bE_{\Pr^N_{\theta^*}}\Big[\exp\Big(
			\mathcal K_{t}^{N, \theta/\ts} \Big)
			\idc{\mathcal \mathcal H^{N, \theta/\ts}_{gl}}\Big]
			+\bE_{\Pr^N_{\theta^*}}\Big[(\mathcal H^{N, \theta/\ts}_{gl})^c\Big] \\
			&\leq \exp(-\frac N 2\mathfrak K_t^{\theta/\ts})+ \Pr_{\ts}^N\left ( \mathfrak J^{N, \theta/\ts} \right)\leq C(t,r)N^{-r/2}.
		\end{split}
	\end{equation*}
	Recall that $\ts\in\theta$ and  $\theta\in\Theta$ are fixed such that $\theta=\ts+h/\sqrt N$ and 
	$\|\theta-\ts\|\geq \hat \eps_\mathrm{k}. $ Therefore from the last inequality we obtain that for all $r\geq 1,$ 
	$$ \bE_{\Pr^N_{\theta^*}}\Big[\exp\Big(
	\mathcal K_{t}^{N, \theta/\ts} \Big)\Big]\leq C(t,r)\|h\|^{-r/2},$$ 
	which completes the proof of Lemma~\ref{lem_mom_lklh}.
\end{proof}

\begin{proof}[Proof of Theorem~\ref{thm_optimality}]
	The uniform LAN property given in Theorem \ref{thm_optimality} is Condition N1 of  \cite[Chapter III]{IH13}. The non-degeneracy assumption, that $I_t^\theta>0$ for any $\theta$, corresponds to Condition N2 of \cite{IH13}. Lemmas~\ref{lem_reg_lklh} and \ref{lem_mom_lklh} are respectively Condition N3 and N4 of \cite{IH13}. 
	The conditions of \cite[Theorem III.1.1]{IH13} are thus satisfied,
	which entails statement (i) of Theorem \ref{thm_optimality}. Statement (ii) is a consequence of \cite[Corollary III.1.1]{IH13} while Statement (iii) is a  consequence of \cite[Theorem III.1.3]{IH13}.
\end{proof}

\subsection*{Declaration of generative AI and AI-assisted technologies in the manuscript preparation process} 

The authors used Mistral AI (Medium 3.5, Version 26.04) for language polishing (grammar and style), drafting initial versions of simulations and result presentations, and refining figure editing. Qwen's and ChatGPT's performance were tested,  for instance by asking for references and
by extending the draft from the one-dimensional to the multivariate setting (which revealed efficient with the former).
All scientific content, analysis, and conclusions were developed independently by the authors. The authors take full responsibility for the content of the published article.

\subsection*{Acknowledgements}

This work was produced as part of the activities of FAPESP Research, Innovation and Dissemination Center for Neuromathematics (grant \#2013/07699-0, S.Paulo Research Foundation (FAPESP)).

\bibliographystyle{cas-model2-names}

\bibliography{bib_LAN_MLE_MF.bib}

\appendix
\appendix
\section{Technical Details of Numerical Simulations}
\label{app:simulation_details}

\subsection{Numerical Methods}
\label{app:numerical_methods}

The numerical implementation closely follows the  model described in Section~\ref{sec_finite}, with discretization and algorithmic choices tailored to ensure stability and efficiency for finite-$N$ systems. Below, we detail the key numerical procedures used in the simulations and statistical analyses.

\subsubsection{Time Discretization and Spike Generation.}
The system of SDEs (\ref{eq:finite}) is approximated using a discrete-time scheme 
according to the parameters summarized in Table~\ref{tab_simulation_parameters_v16}.
The deterministic drift $b(x) = -\alpha x$ is handled via an explicit resolution at each discrete step, while spikes are generated using the Sellke construction \cite{Se83}, a standard method for simulating inhomogeneous Poisson processes. Specifically, for each neuron $i$ and time step $k$, the spike probability is determined by the instantaneous firing rate $f_\theta(X_{k \Delta t}^{(i,N,\theta)})\Delta t$.
These values are deducted iteratively for neuron $i$ from an initial value $E^i_0$ sampled (independently for each neuron) according to a standard exponential distribution,
until negative values are reached.
This event corresponds to the spike of the neuron, at which time a new sample of $E^i$ is generated according to the exponential distribution.
The construction is such that the remaining value of $E^i$ at each time step remains exponentially distributed, ensuring the desired jump rate while keeping the procedure of random generation at the same amount than the number of jumps
plus the number of neurons without any spike.

\begin{table}[htbp]
	\begin{tabular}{|lcl|p{0.5\textwidth}|}
		\hline
		& \textbf{Symbol}       & \textbf{Value} & \textbf{Description} \\
		\hline
		\multicolumn{3}{|c|}{	\textbf{Model Parameters}          }                      & \\
		& $N$                   & $100$               & Number of neurons. \\
		
		& $\alpha$               & $1.5$               & Drift coefficient in $b(x) = -\alpha x$. \\
		& $\varphi(x)$          & $-x$                & Reset function, \hfill\break applied to a neuron's state after it spikes. \\	
		& $\theta^*_0$             & $2.5$               & Saturation level of the firing rate function. \\
		& $\theta^*_1$             & $1.0$               & Inflection point of the firing rate function. \\
		& $\theta^*_2$             & $3.0$               & Slope at inflection point of the firing rate function. \\
		
		& $\nu$                 & $\mathcal{U}(\nu_1 - \nu_2, \nu_1 + \nu_2)$ & Distribution of jump sizes. \\
		& $\nu_1$                & $2.4$               & Base effect of a spike on other neurons. \\
		& $\nu_2$                & $ 0.8=  \nu_1 / 3$ & dispersion of the effect. \\
		\hline
		\multicolumn{3}{|c|}{		\textbf{Simulation Settings}}                & \\
		& $T$                   & $10.0$ s             & Final observation time. \\
		& $\Delta t$            & $0.002$ s            & Time step size. \\
		& $N_{\text{steps}}$  & $5000$               & Number of time steps ($T / \Delta t$). \\
		& $X_0^i$              & $\mathcal{U}(0, 4)$  & Initial state distribution for each neuron. \\
		\hline
		\multicolumn{3}{|c|}{		\textbf{Optimization Parameters}}               & \\
		& $\theta_{\text{init}}$ & $[1.2, 0.8, 4.0]$   & Initial guess for MLE optimization. \\
		& $\text{maxiter}$      & $200$               & Maximum iterations for the L-BFGS-B optimizer. \\
		& Bounds               & $\theta_0 > 10^{-3}, \theta_2 > 10^{-3}$ & Constraints for MLE optimization. \\
		\hline
	\end{tabular}
	\centering
	\caption{Parameter values used in the numerical simulations. The sigmoidal firing rate function is defined as $f_\theta(x) = \theta_0/[1 + \exp(-\theta_2 (x - \theta_1))]$, where $\theta = (\theta_0, \theta_1, \theta_2)$.}
	\label{tab_simulation_parameters_v16}
\end{table}

\subsubsection{Empirical Fisher Information}
The empirical Fisher information matrix $I_t^{N,\theta^*}$ is computed according to \eqref{eq_def_INts},
with the discretized in time version of the empirical  measure process $\mu^{N,\theta^*}$ (Equation~\ref{eq:muN}),
and the analytically computed gradient $\dot{f}_{\theta^*}$. 
The quantity $\Delta_t^{N,\theta^*}$ which appears in the LAN property (Equation~\ref{eq:LAN})
is similarly computed according to \eqref{eq_def_MN1},
by exploiting also the spiking times 
and corresponding previous potential value
from the algorithm.
Both implementations are provided in the \verb+`compute_delta_and_I`+ function of the code. 

\subsubsection{Maximum Likelihood Estimation}
The MLE $\hat{\theta}_t^N$ is obtained by maximizing the log-likelihood ratio (a discretized version of \ref{eq_def_L}) with respect to $\theta \in \Theta$. This is achieved using the \texttt{scipy} function \texttt{optimize.minimize} with the L-BFGS-B method, which supports box constraints to enforce $\theta_0 > 0$ and $\theta_2 > 0$. The negative log-likelihood is minimized over the parameter space, with an initial guess of $\theta_{\text{init}} = [1.2, 0.8, 4.0]$ and a maximum of 200 iterations. The bounds for the optimization are set to $(10^{-3}, \infty)$ for $\theta_0$ and $\theta_2$, while $\theta_1$ is unconstrained. This approach is implemented in the \verb+`estimate_mle_scipy`+ function, which leverages the analytical form of the likelihood ratio to ensure numerical efficiency.

\subsubsection{Statistical Tests}
To assess the empirical distributions of the estimators and LAN quantities, we employ the following multivariate statistical tests, all implemented using the \texttt{pingouin} and \texttt{scipy.stats} libraries in Python:

$\star$ \uline{Henze-Zirkler Test:} A multivariate normality test based on the work of \cite{HZ90}. This test evaluates whether the empirical distribution of $\Delta_t^{N,\theta^*}$, the LAN predictor, or the MLE matches a multivariate normal distribution. The null hypothesis is that the data is normally distributed.

$\star$ \uline{Hotelling's $T^2$ Test:} A test for the mean vector of a multivariate distribution \cite{Ho31}. For $\Delta_t^{N,\theta^*}$, the null hypothesis is that the mean is zero, while for the LAN predictor and MLE, it tests whether the mean matches the true parameter $\theta^*$. 

$\star$ \uline{Mauchly's Test:} A test for sphericity, which checks whether the covariance matrix of a dataset is proportional to the identity matrix \cite{Mau40}. In our context, we apply this test to the {transformed} data (e.g., $(\hat{I}_t^{N, \theta_*})^{1/2} (\hat{\theta}_{t}^N - \theta^*)$) to verify whether the empirical covariance matches the theoretical covariance structure predicted by the Fisher information matrix.

All tests are performed at a significance level of $\alpha = 0.05$, and the results are reported in the CSV files \texttt{test\_results\_*.csv} for $\Delta_t^{N,\theta^*}$ (abbreviated as \texttt{delta})
and $\hat{\theta}_{t}^N$ (\texttt{MLE}), but also $\wtd{\theta}_{t}^N$ (\texttt{LAN}).

\subsection{Other parameter values}
\label{app_variations}

Alternative scenarios have been considered.
First, we will discuss the effect of increasing the number of neurons in Section~\ref{app_larger_nbr},
thus making the system closer to the McKean-Vlasov dynamics.
Secondly in Section~\ref{app_random},
we will present scenarios for which the randomness is more pronounced on the contrary, 
firstly with a smaller neuron number, then with another set of parameters.

\subsubsection{Larger neuron number}
\label{app_larger_nbr}

When we increase the number of neurons to $N= 100$ and even more with $N=500$,
the fluctuations of the system get visibly averaged out, 
as we can see in Figures~\ref{fig_neuron_traj_n100} and \ref{fig_raster_N100}
that are to be compared with respectively Figures~\ref{fig_neuron_traj} and \ref{fig_raster}.
In particular for $N=500$, 
the dynamics appear very close to deterministic between spikes, while the global spiking intensity appears to stabilize well after 1s.

\begin{figure}[htbp]
	\begin{subfigure}[b]{0.48\textwidth}
		\centering
		\includegraphics[width=\textwidth]{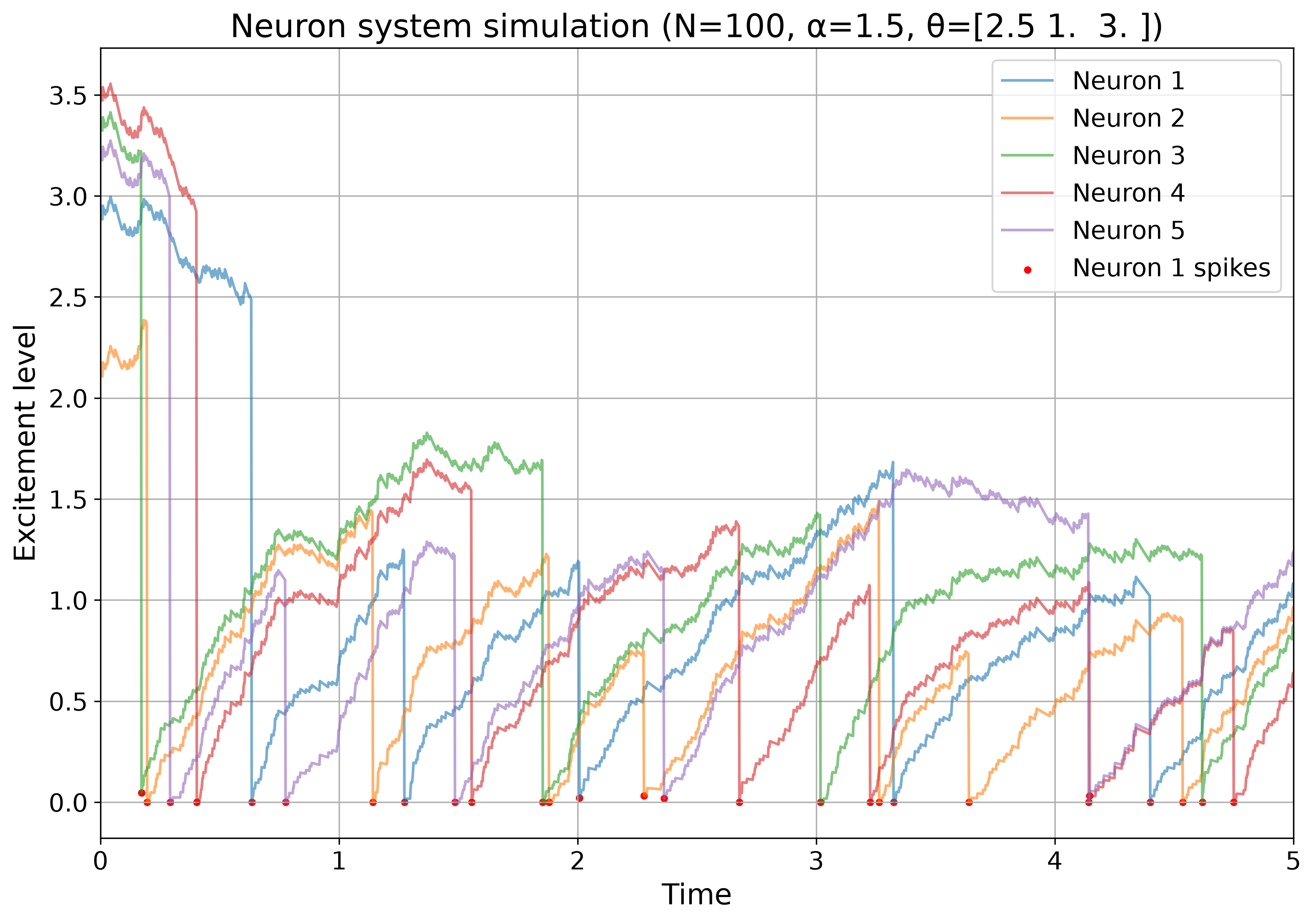}
		\caption{$N=100$.}
	\end{subfigure}
	\hfill
	\vrule
	\hfill
	\begin{subfigure}[b]{0.48\textwidth}
		\centering
		\includegraphics[width=\textwidth]{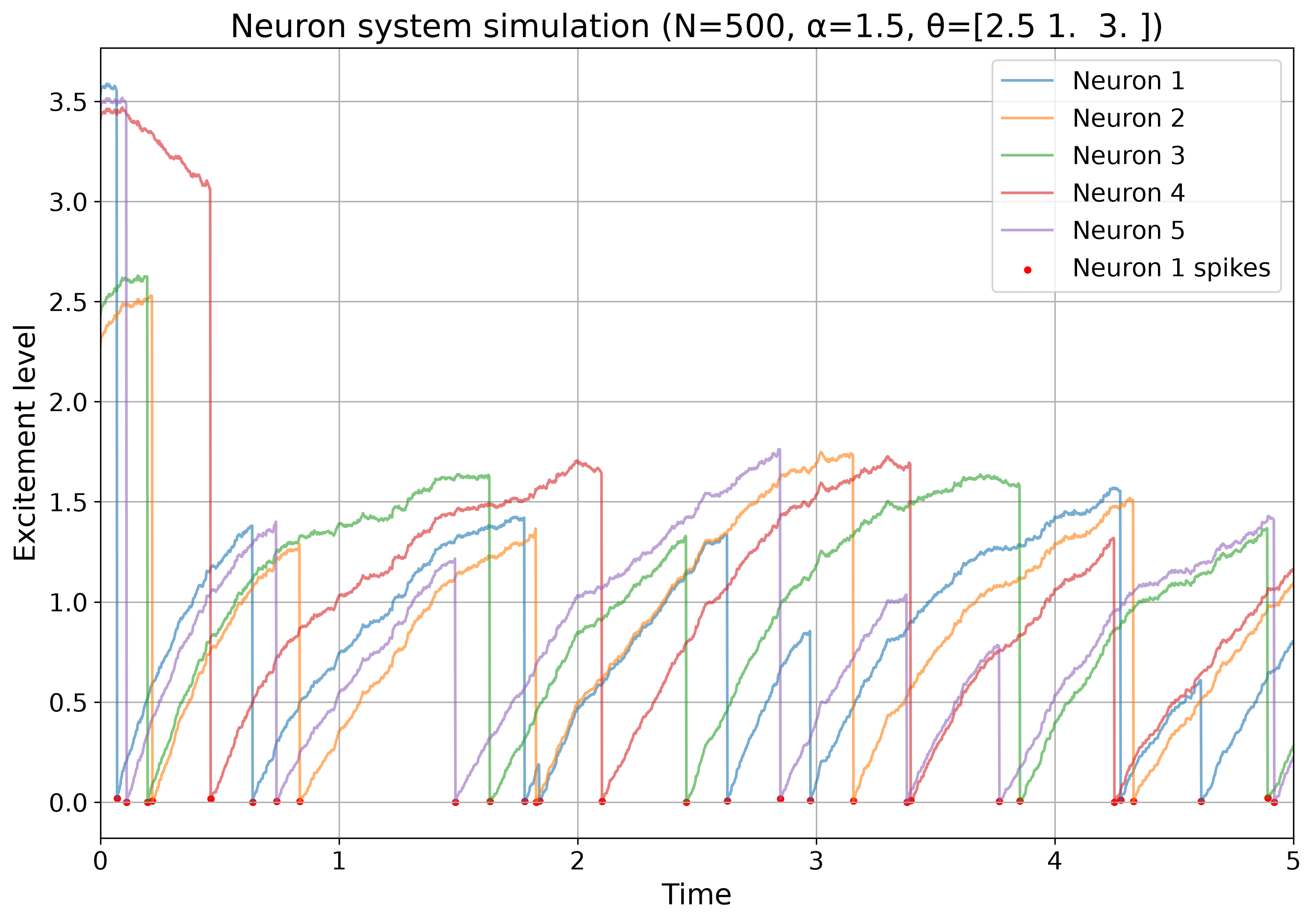}
		\caption{$N=500$.}
	\end{subfigure}

	\caption{Illustration of 5 neuron state trajectories over the time-window $[0, 5]$, for $N= 100$ and $N=500$ neurons.}
	\label{fig_neuron_traj_n100}
\end{figure}

\begin{figure}
	\centering
	\begin{subfigure}[b]{0.48\textwidth}
		\centering
		\includegraphics[width=\textwidth]{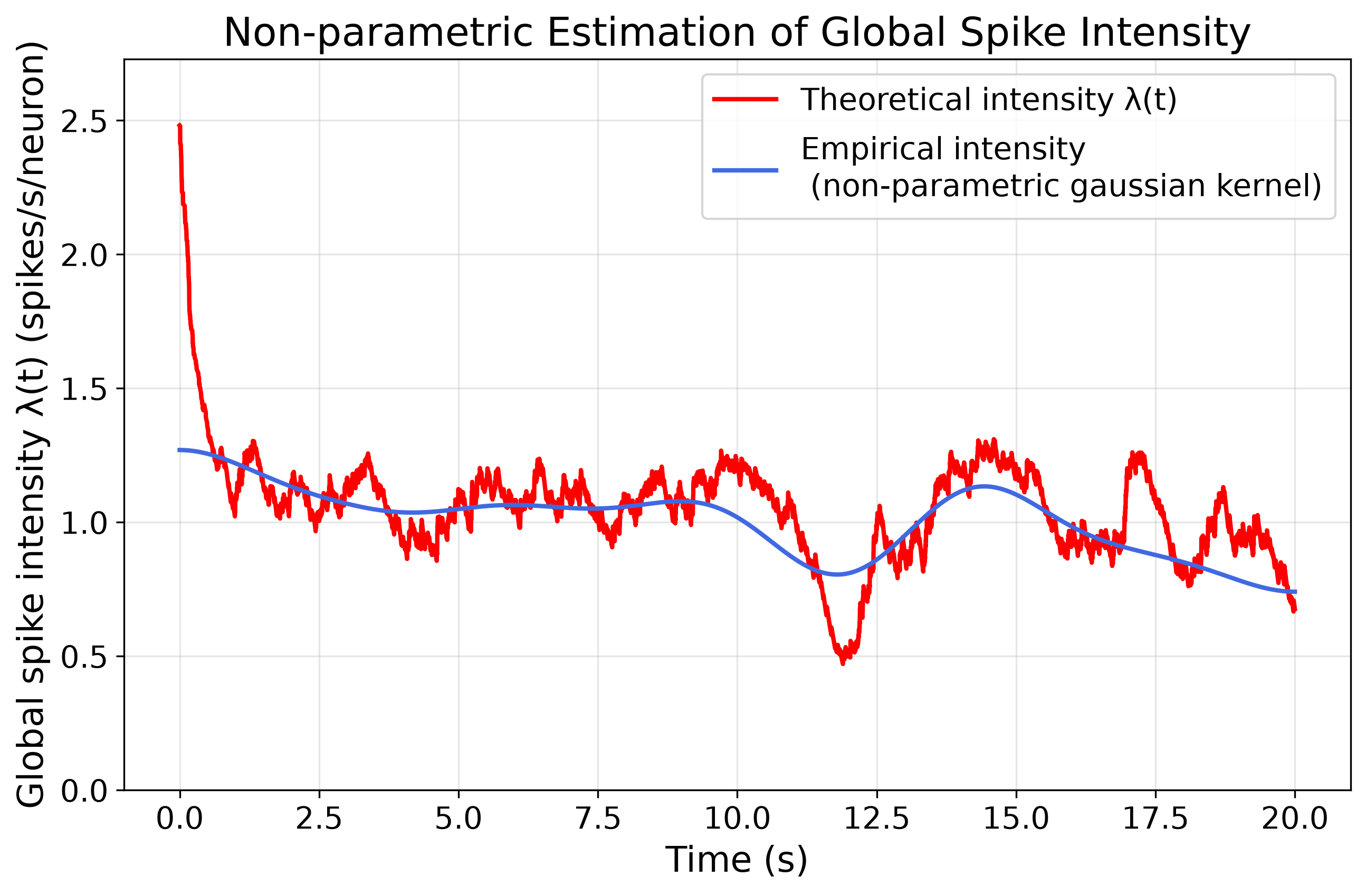}
		\caption{$N= 100$.}
	\end{subfigure}
	\hfill
	\vrule
	\hfill
	\begin{subfigure}[b]{0.48\textwidth}
		\centering
		\includegraphics[width=\textwidth]{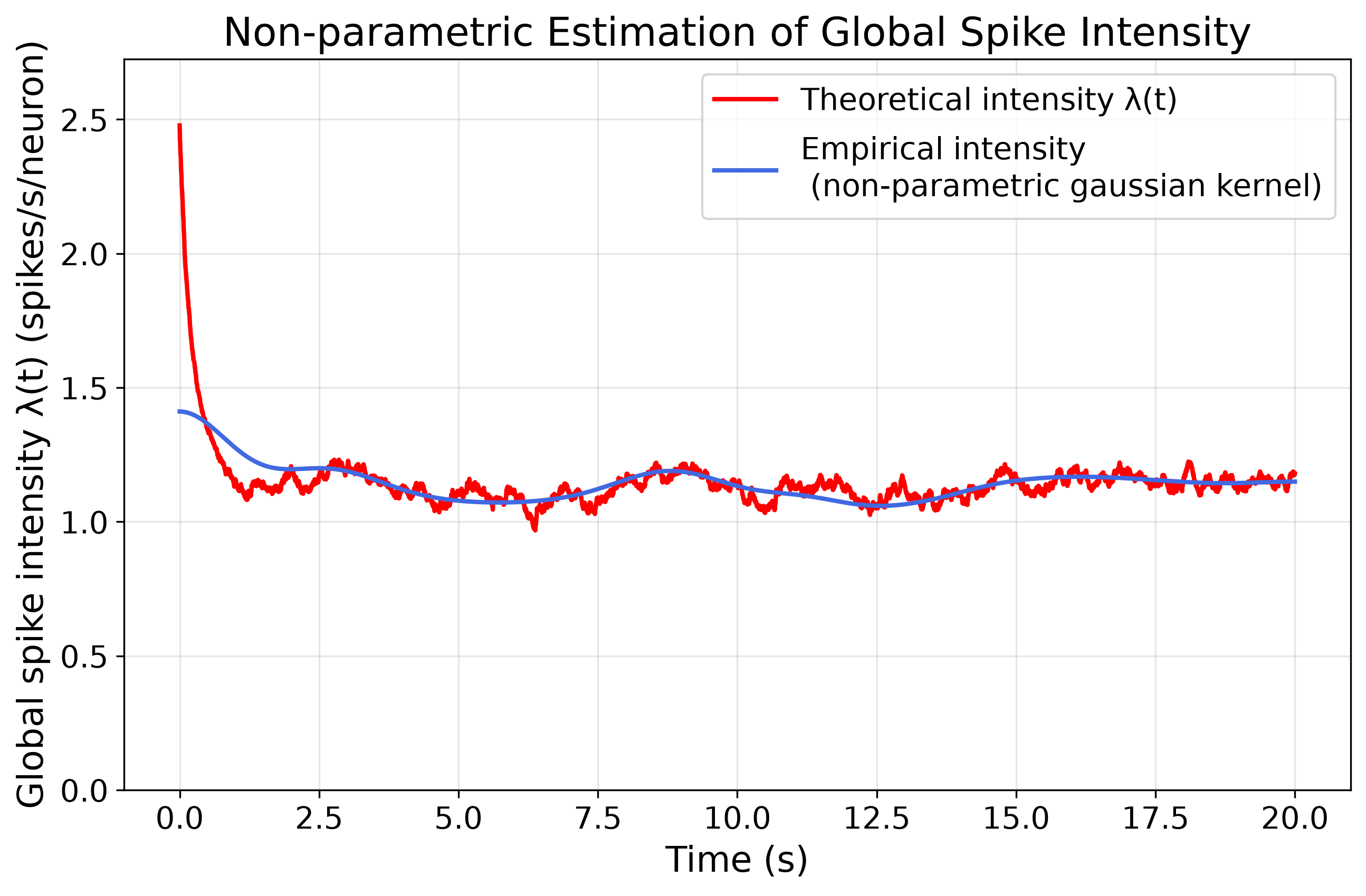}
		\caption{$N= 500$.
		}
	\end{subfigure}
	\caption{Estimation of the global spiking intensity for $N= 100$ and $N=500$ neurons, in blue with a nonparametric gaussian kernel intensity derived from the spike times,
		in red with the computation of the spiking rate over the neuronal states (for comparison).}
	\label{fig_raster_N100}
\end{figure}

Next, we compare the empirical and theoretical covariance matrices for $ \Delta^{N, \ts}_t$  and the MLE in Figures~\ref{fig_cov_comparison_N100}, \ref{fig_cov_comparison_N500}, to be compared with Figure~\ref{fig_cov_comparison}.
While the quality of approximation appears 
not to improve with $N=100$  as compared to $N=50$,
it is even worse concerning the MLE,
an improvement is observed with $N=500$.
Remark that the number of replicates may not be large enough 
for the empirical covariance matrix to be estimated accurately enough.
Note however that these estimations of covariance for the MLE are rescaled by $1/N$, so that the estimator is indeed more efficient as $N$ gets larger.

This inadequacy in the covariance for the replicates where $N=100$
naturally entails the rejection of Mauchly's test, as we can see in Table~\ref{tab:test_results}.
In this table, we display the p-values corresponding to our different tests for the varying population sizes under consideration.
Most of the other tests are accepted with large p-values\footnote{
	While we may expect $\Delta^{N, \ts}_t$
	to be closer than the MLE 
	to the expected normal distribution, 
	and the LAN predictor to be in-between,
	this is not clearly visible from the p-values.}.
Nonetheless, we observe that the test of mean value
is rejected for both the MLE and  $\Delta^{N, \ts}_t$
in the situation where $N=500$.
This outcome is surprising. It may be due to a rare event for the replicates, given that all quantities are affected similarly  (in a new set of 100 replicates of the same scenario, this test is far from being rejected).
The case where $N=25$
is  displayed as well in Table~\ref{tab:test_results},
and will be discussed in the following subsection,
because it corresponds to a larger level of randomness.

\begin{figure}[htbp]
	\centering
	\begin{subfigure}[b]{0.48\textwidth}
		\centering
		\includegraphics[width=\textwidth]{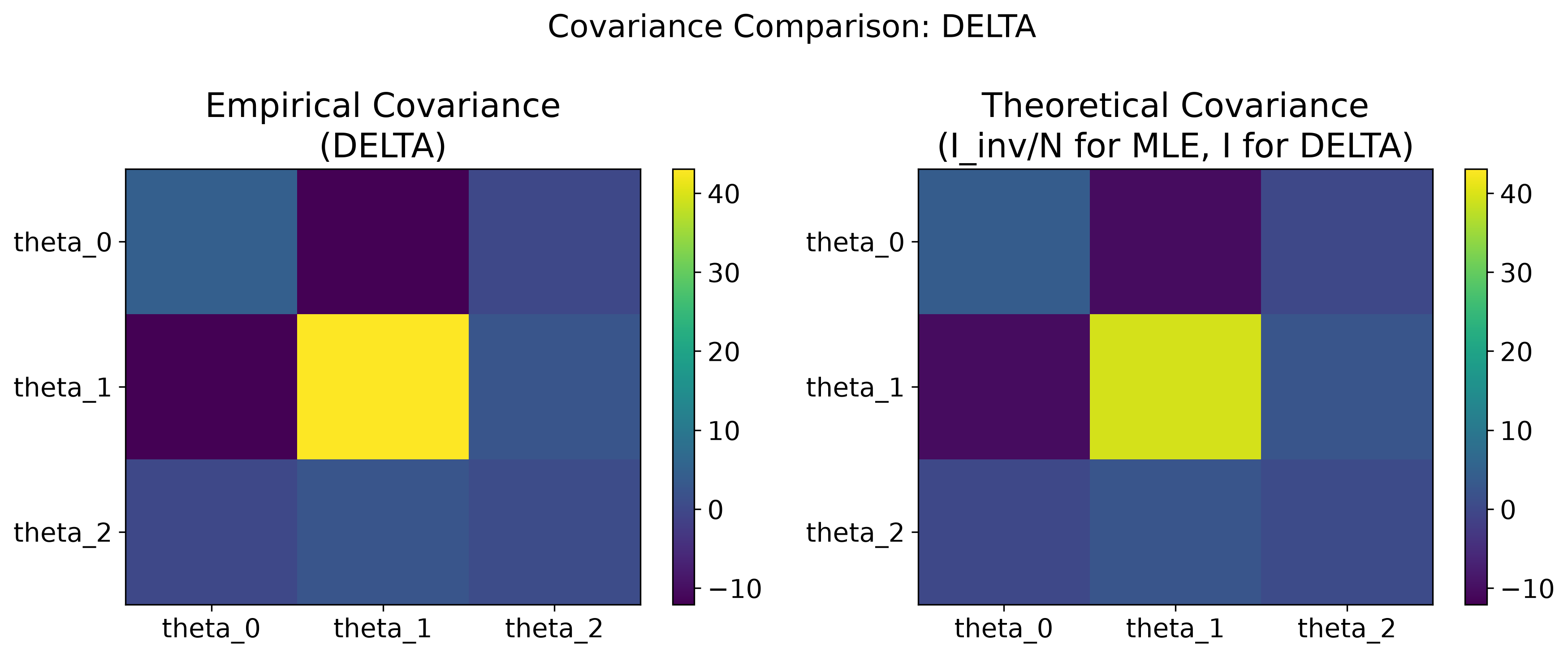}
		\caption{Empirical covariance matrix of $ \Delta^{N, \ts}_t$ (over 100 replicates) as compared to $\wht I^{N}_t$.
			Relative error in Frobenius norm: 0.10.}
	\end{subfigure}
	\hfill
	\vrule
	\hfill
	\begin{subfigure}[b]{0.48\textwidth}
		\centering
		\includegraphics[width=\textwidth]{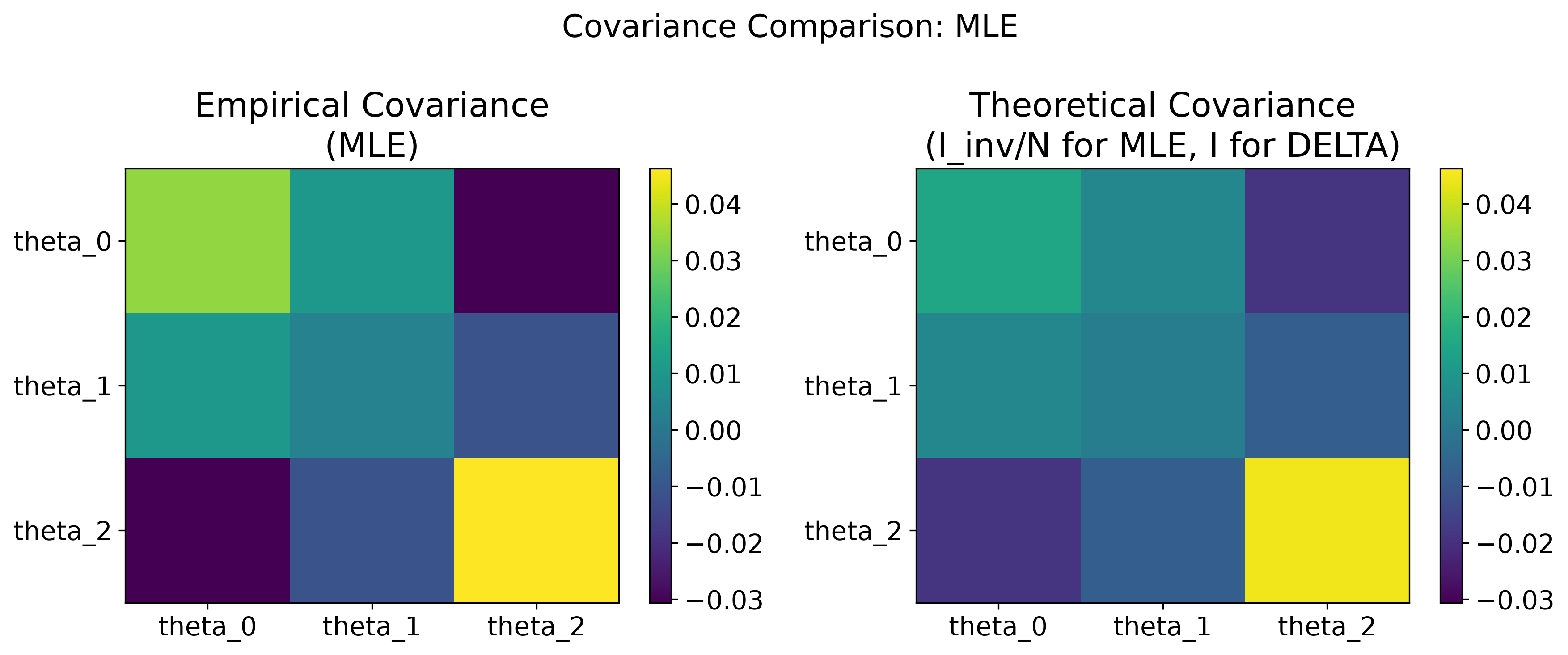}
		\caption{Empirical covariance matrix of the MLE as compared to $N^{-1} (\wht I^{N}_t)^{-1}$.
			Relative error in Frobenius norm: 0.48.
		}
	\end{subfigure}
	\caption{Comparison between the empirical covariance and the prediction with the estimated Fisher information matrix, when $N= 100$.}
	\label{fig_cov_comparison_N100}
\end{figure}

\begin{figure}[htbp]
	\centering
	\begin{subfigure}[b]{0.48\textwidth}
		\centering
		\includegraphics[width=\textwidth]{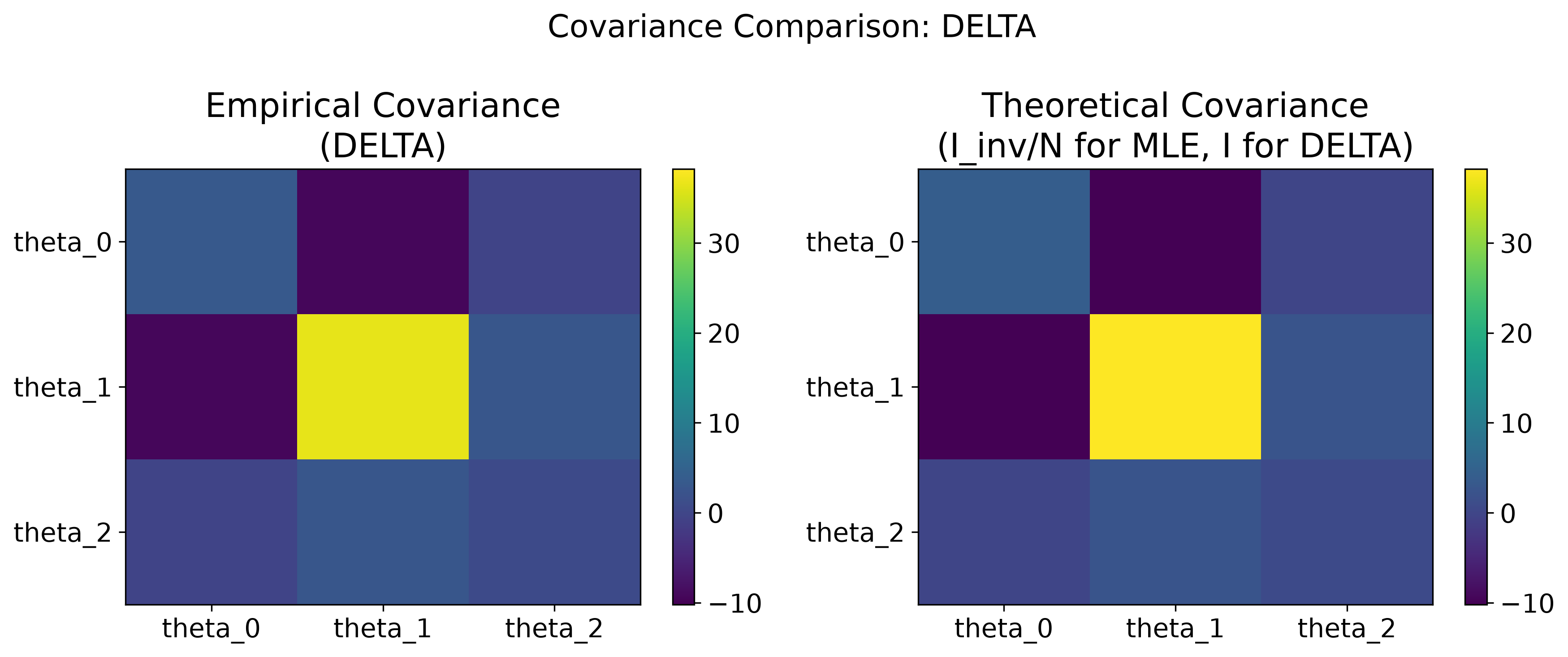}
		\caption{Empirical covariance matrix of $ \Delta^{N, \ts}_t$ (over 100 replicates) as compared to $\wht I^{N}_t$. Relative error in Frobenius norm: 0.06. }
	\end{subfigure}
	\hfill
	\vrule
	\hfill
	\begin{subfigure}[b]{0.48\textwidth}
		\centering
		\includegraphics[width=\textwidth]{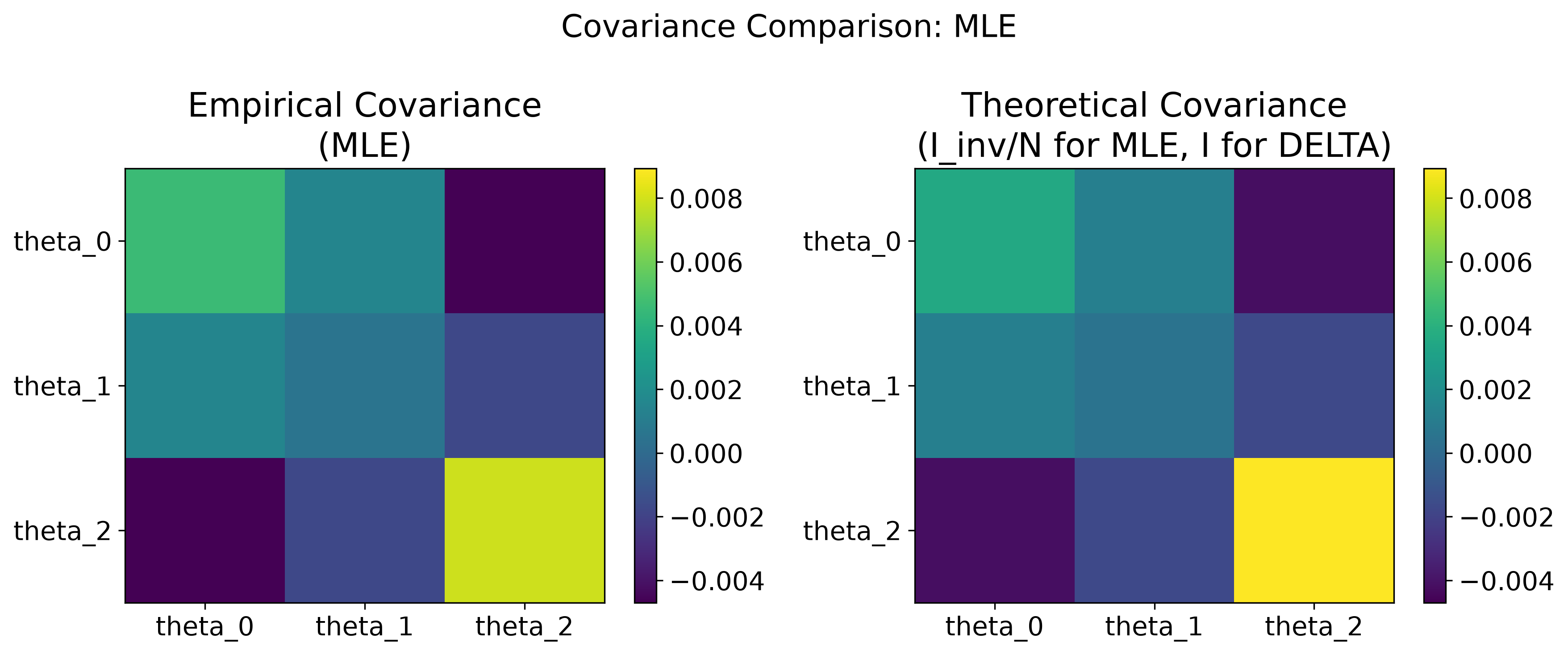}
		\caption{Empirical covariance matrix of the MLE as compared to $N^{-1} (\wht I^{N}_t)^{-1}$. Relative error in Frobenius norm: 0.15.
		}
	\end{subfigure}
	\caption{Comparison between the empirical covariance and the prediction with the estimated Fisher information matrix, when $N= 500$.}
	\label{fig_cov_comparison_N500}
\end{figure}

\begin{table}[htbp]
	\centering
	\footnotesize
	\setlength\tabcolsep{3pt}
	\begin{tabular}{|l|ccc||ccc||ccc||ccc|}
		\hline
		& \multicolumn{3}{|c||}{$N=25$} & \multicolumn{3}{|c||}{$N=50$} & \multicolumn{3}{|c||}{$N=100$} & \multicolumn{3}{|c|}{$N=500$} \\
		\hline
		&	HZ & HT & M & HZ & HT & M & HZ & HT & M & HZ & HT & M \\
		\hline
		MLE 
		&\!0.105\! &\!0.161\! & \cellcolor[gray]{0.90} 0.024
		&\!0.527\! &\!0.518\! &\!0.769\! 
		&\!0.504\! &\!0.072\! & \cellcolor[gray]{0.90} 0.002
		&\!0.623\! &\cellcolor[gray]{0.90} 0.016 &\!0.094\! \\
		LAN 
		&\!0.077\! & \cellcolor[gray]{0.90} 0.027 & \cellcolor[gray]{0.90} 0.006
		&\!0.746\! &\!0.348\! &\!0.538\! 
		&\!0.420\! &\!0.390\! & \cellcolor[gray]{0.90} 0.005
		&\!0.442\! & \cellcolor[gray]{0.90} 0.017 &\!0.104\! \\
		Delta 
		&\cellcolor[gray]{0.90} 0.025\! &\!0.232\! & \cellcolor[gray]{0.90} 0.050
		&\!0.452\! &\!0.714\! &\!0.205\! 
		&\!0.546\! &\!0.210\! &\!0.996\! 
		&\!0.535\! & \cellcolor[gray]{0.90} 0.016 &\!0.918\! \\
		\hline
	\end{tabular}
	\caption{Test results given the p-value for different sample sizes ($N$); methods: $HZ$, $HT$ and $M$ for respectively Henze-Zirkler, Hotelling's $T^2$ and Mauchly, that are tests for respectively normality, mean and sphericity; and quantities: $MLE$, $LAN$ and $Delta$ stands respectively for $\hat \theta_t^N$,  $\check \theta_t^N$ and $\wht \Delta^{N, \ts}_t/\sqrt{N}$. 
		Accepted if $p > 0.05$, Rejected (in gray) otherwise.}
	\label{tab:test_results}
	
\end{table}

Concerning finally the adequation between the MLE and the LAN predictor, we show in Figure~\ref{fig_LAN_MLE_v100}
the distribution of relative error that is to be compared with Figure~\ref{fig_LAN_MLE}-(B).
This plot confirms that the approximation gets clearly better as the number of neurons gets larger, even though the MLE gets closer to $\ts$.
For instance, 90\% of the replicates have a relative error below 15\% for $N= 100$ 
and below 10\% for $N=500$ (under both distances in both cases).

\begin{figure}[htbp]
	\centering
	\begin{subfigure}[b]{0.48\textwidth}
		\centering
		\includegraphics[width=\textwidth]{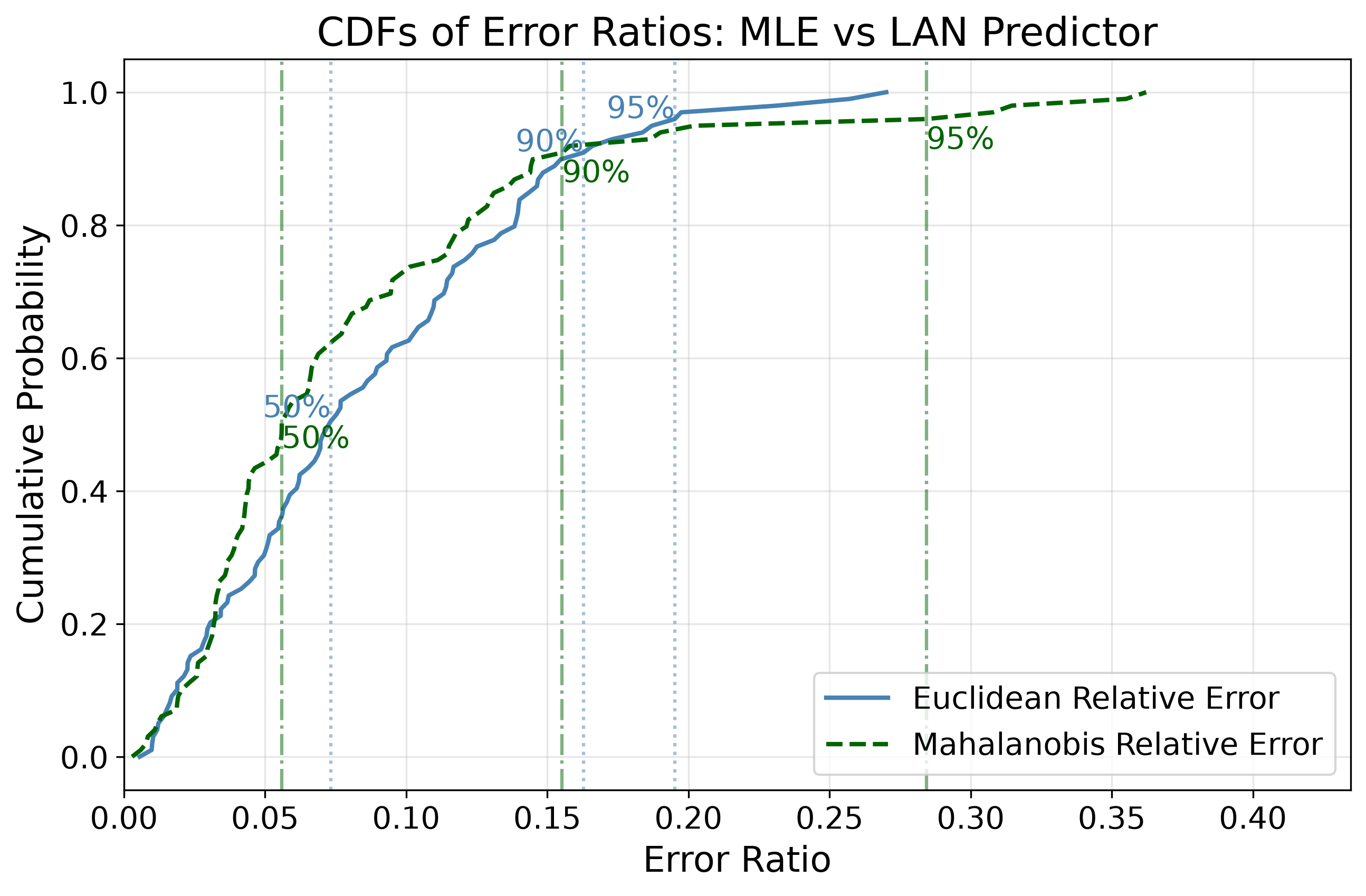}
		\caption{$N= 100$. }
	\end{subfigure}
	\hfill
	\vrule
	\hfill
	\begin{subfigure}[b]{0.48\textwidth}
		\centering
		\includegraphics[width=\textwidth]{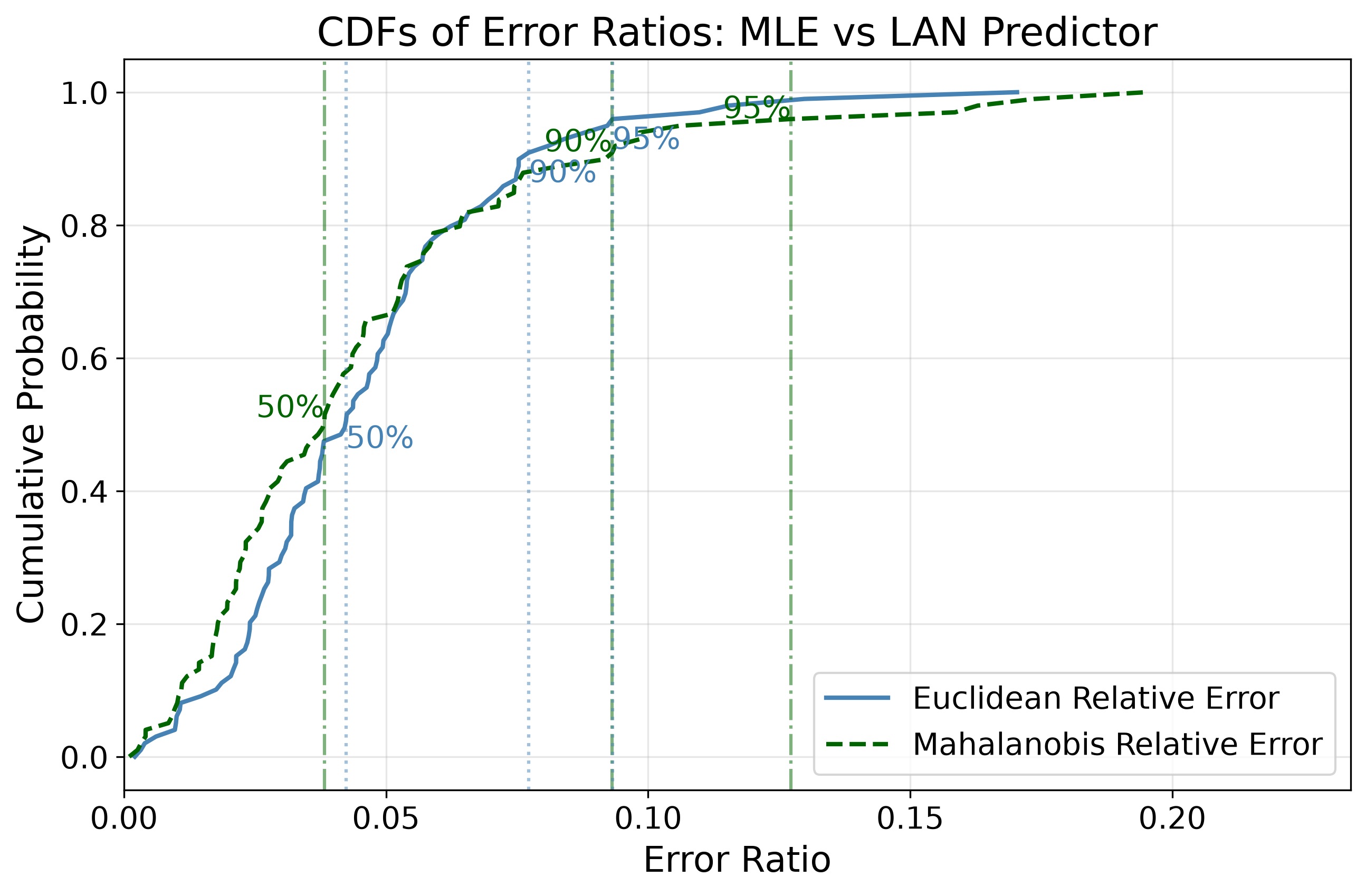}
		\caption{$N = 500$.
		}
	\end{subfigure}
	\caption{Comparison between the estimated MLE $\hat \theta_t^N$ and the LAN predictor  $\check \theta_t^N = \ts + 	(\wht I^{N, \ts}_t)^{-1} 	\wht \Delta^{N, \ts}_t/\sqrt{N}$
		as compared to $\ts$, for $N= 100$ and $N=500$ neurons.
		C.d.f. of the relative error $\|\hat \theta_t^N - \check \theta_t^N\|/\|\hat \theta_t^N - \theta_*\|$ for the Euclidian distance (blue line) and the Mahalanobis distance (green dashed line). Vertical lines display the quantiles of both distributions corresponding to 
		$50$\%, $90$\% and $95$\%.}
	\label{fig_LAN_MLE_v100}
\end{figure}

\subsubsection{Larger fluctuation levels}
\label{app_random}

The effect of a larger level of randomness can first 
be identified by looking at the scenario with $N=25$,
as it was hinted in the discussion of Table~\ref{tab:test_results}.
There, we can see that some tests are rejected, especially the ones regarding the covariance matrix, 
and that the MLE surprisingly scores better for the two other tests.

That the fluctuations of the system do visibly not get  averaged out when $N=25$ can be realized by looking at Figure~\ref{fig_raster_N25} whose panels are to be compared respectively with Figures~\ref{fig_neuron_traj} and \ref{fig_raster}.
In particular with the estimation of the global spiking intensity in panel~(B),
we can observe an instance of extinction of the system,
after an intermediate period of stabilization.

Next, we look at the covariance matrix of the MLE 
and the relative error with the LAN predictor in Figure~\ref{fig_cov_comparison_N25}, that is to be compared with Figure~\ref{fig_cov_comparison}.
Despite these large randomness levels,
the empirical covariance matrix of the MLE fits the expected matrix quite well.
The relative error is undoubtedly larger.
Yet more than 50\% of the replicates have a relative error below 20\%, and more than 95\% below 40\%,
which is still significant.
\medskip

\begin{figure}
	\centering
	\begin{subfigure}[b]{0.48\textwidth}
		\centering
		\includegraphics[width=\textwidth]{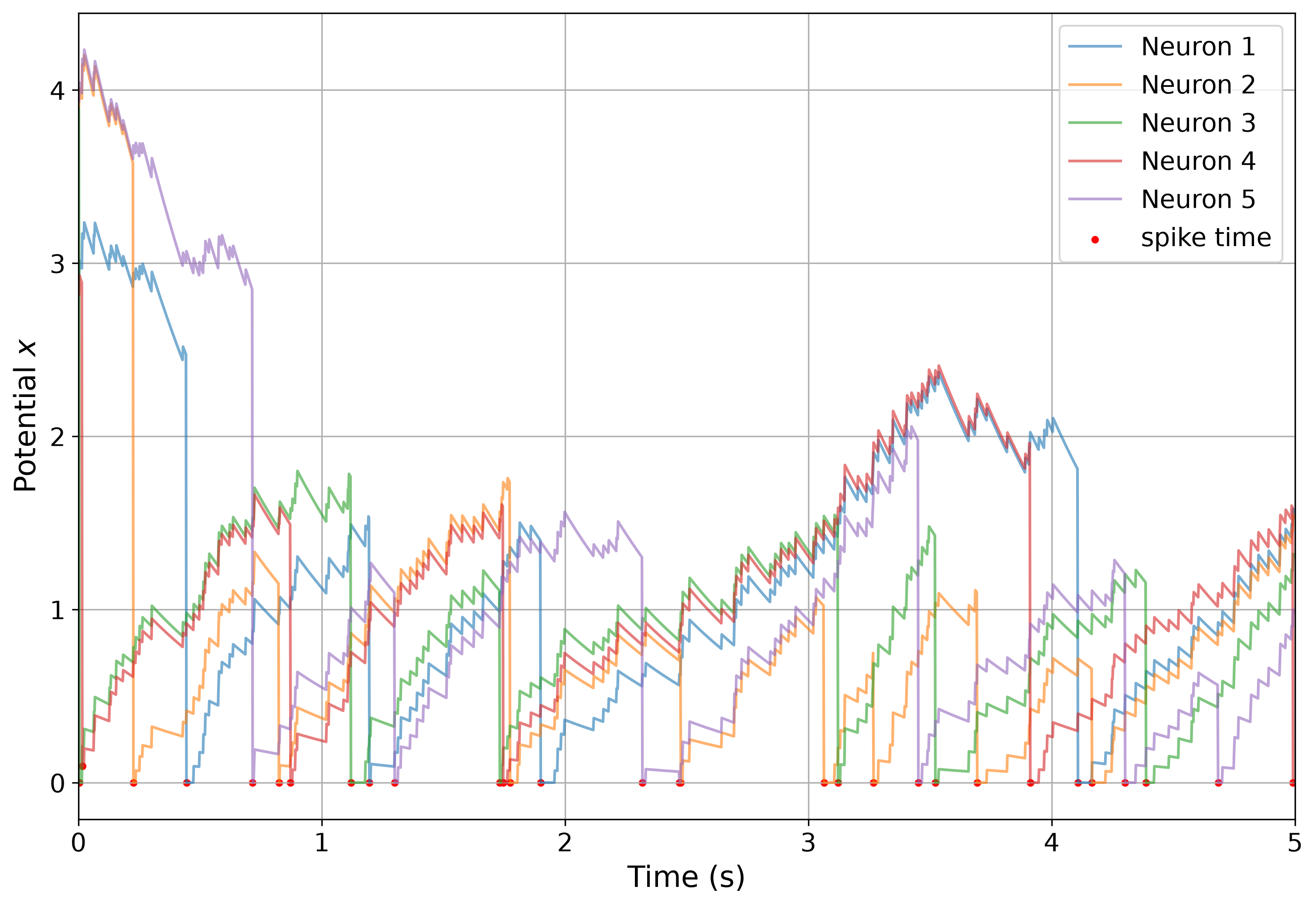}
		\caption{Neuron trajectories.}
	\end{subfigure}
	\hfill
	\vrule
	\hfill
	\begin{subfigure}[b]{0.48\textwidth}
		\centering
		\includegraphics[width=\textwidth]{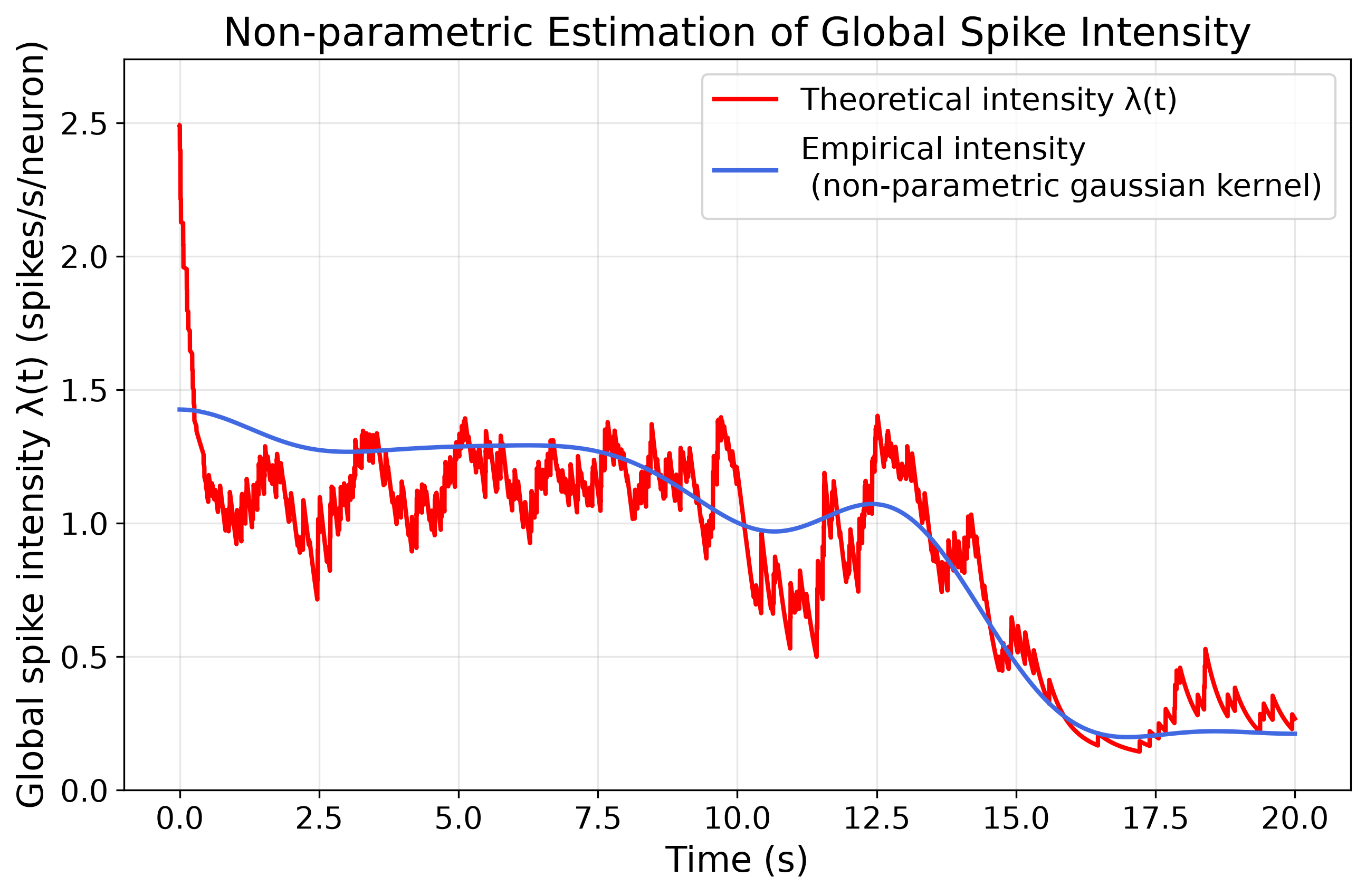}
		\caption{Estimated global spiking intensity.
			The extinction of the system can be observed.
		}
	\end{subfigure}
	\caption{Visualisation of the randomness level for $N= 25$.}
	\label{fig_raster_N25}
\end{figure}

\begin{figure}
	\centering
	\begin{subfigure}[b]{0.48\textwidth}
		\centering
		\includegraphics[width=\textwidth]{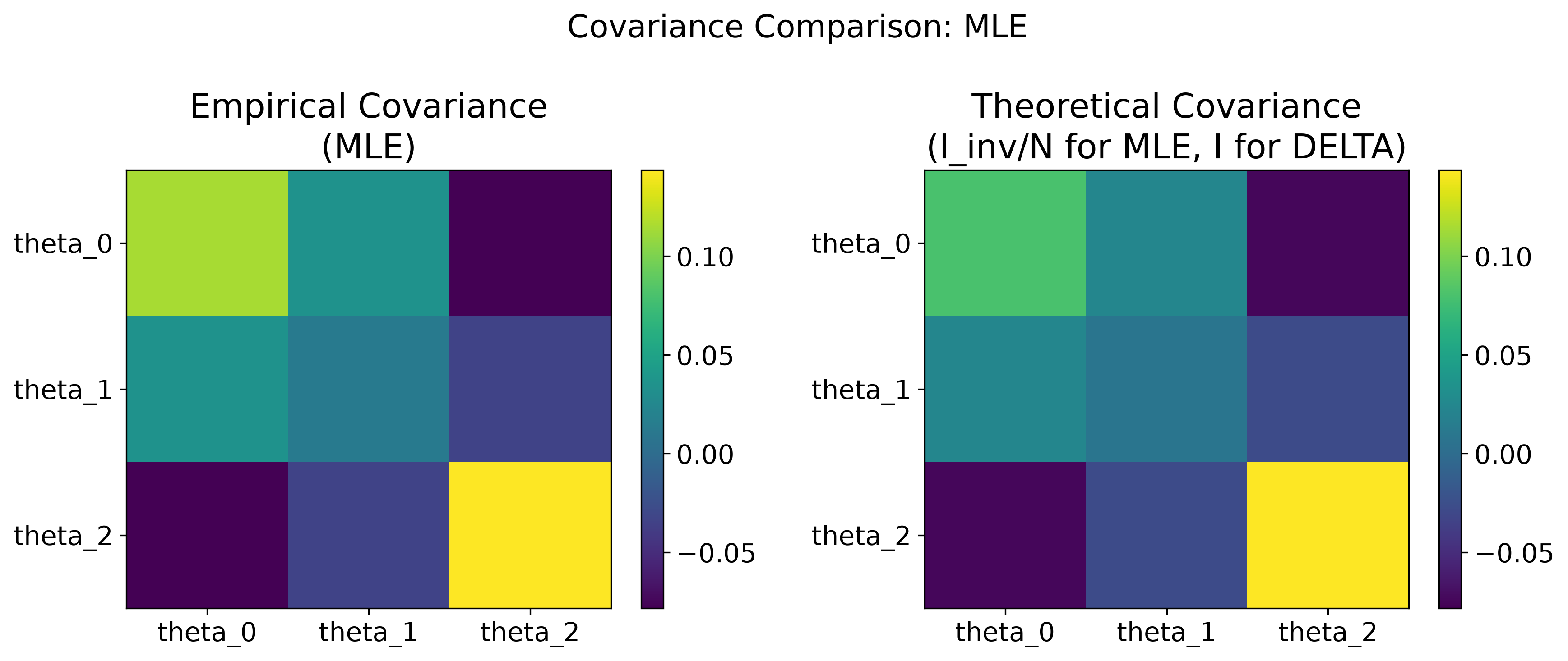}
		\caption{Empirical covariance matrix of the MLE as compared to $N^{-1} (\wht I^{N}_t)^{-1}$. Relative error in Frobenius norm: 0.20.
		}
	\end{subfigure}
	\hfill
	\vrule
	\hfill
	\begin{subfigure}[b]{0.48\textwidth}
		\centering
		\includegraphics[width=\textwidth]{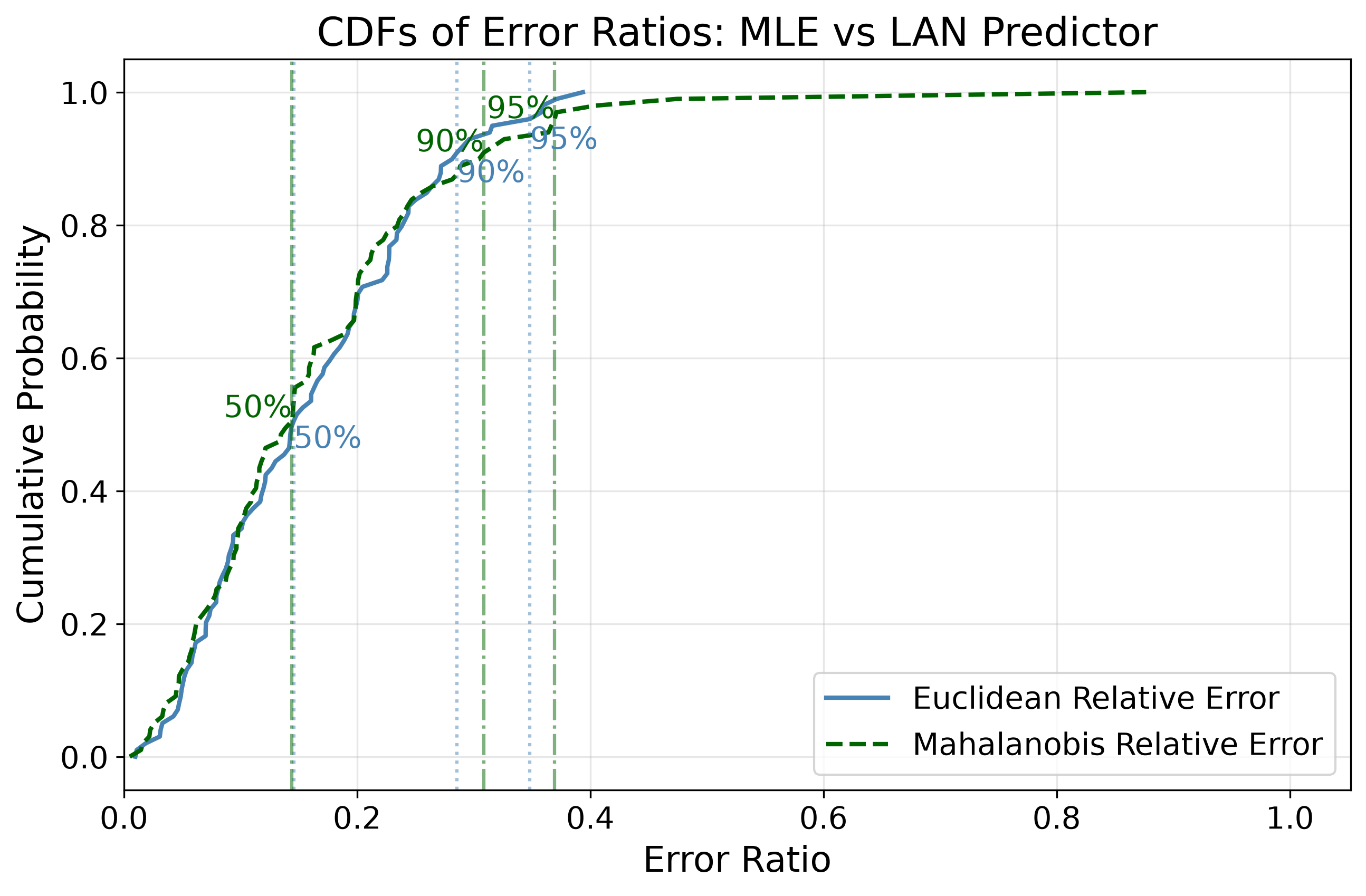}
		\caption{	C.d.f. of the relative error $\|\hat \theta_t^N - \check \theta_t^N\|/\|\hat \theta_t^N - \theta_*\|$.
		}
	\end{subfigure}
	\caption{Comparison between the LAN prediction and the MLE, for $N=25$.}
	\label{fig_cov_comparison_N25}
\end{figure}

\FloatBarrier
Finally, we describe the outcome of replicates generated with another set of parameters, given in Table~\ref{tbl_v39}, and for which the randomness level is amplified with still $N=50$.
As shown in Figure~\ref{fig_raster_N27}-(B),
the spiking rate is smaller,
and while the theoretical intensity corresponding to neuronal stats stabilizes quickly and remains quite confined, there are large variations in the observed intensity.
So neuron trajectories are affected by less frequent oscillations yet more variations in the effective slope, 
as we can see in Figure~\ref{fig_raster_N27}-(A). 
The covariance matrices are also quite badly conditioned,
in that there is much more precision in $\theta_0$ 
(as we can see in Figure~\ref{fig_cov_comparison_v39}-(A)).
and much less in $\theta_2$ 
(as we can see in Figure~\ref{fig_cov_comparison_v39}-(B)).
The relative error (for the covariance matrices in Frobenius norm) is actually reduced in these conditions, 
to $1.4\%$ for $\Delta_t^{N, \ts}$
and to $14\%$ for the MLE,
though it is difficult to identify visually.

\begin{minipage}{0.48\textwidth}
	\begin{tabular}{|c|c|c|c||c|c|c|}
		\hline
		$N$&	$\alpha$ & $\nu_1$ & $\nu_2$ & $\theta_0$ & $\theta_1$ & $\theta_2$  \\
		\hline
		$50$&	$0.3$ & $10$ & $ 10=\nu_1$  & $0.8$ & $0.6$ & $3.0$  \\
		\hline
	\end{tabular}
	\centering
	\captionof{table}{Alternative set of parameters.}
	\label{tbl_v39}
\end{minipage}
\hfill
\vline
\hfill
\begin{minipage}{0.48\textwidth}
	\centering
	\includegraphics[width = 0.6\textwidth]{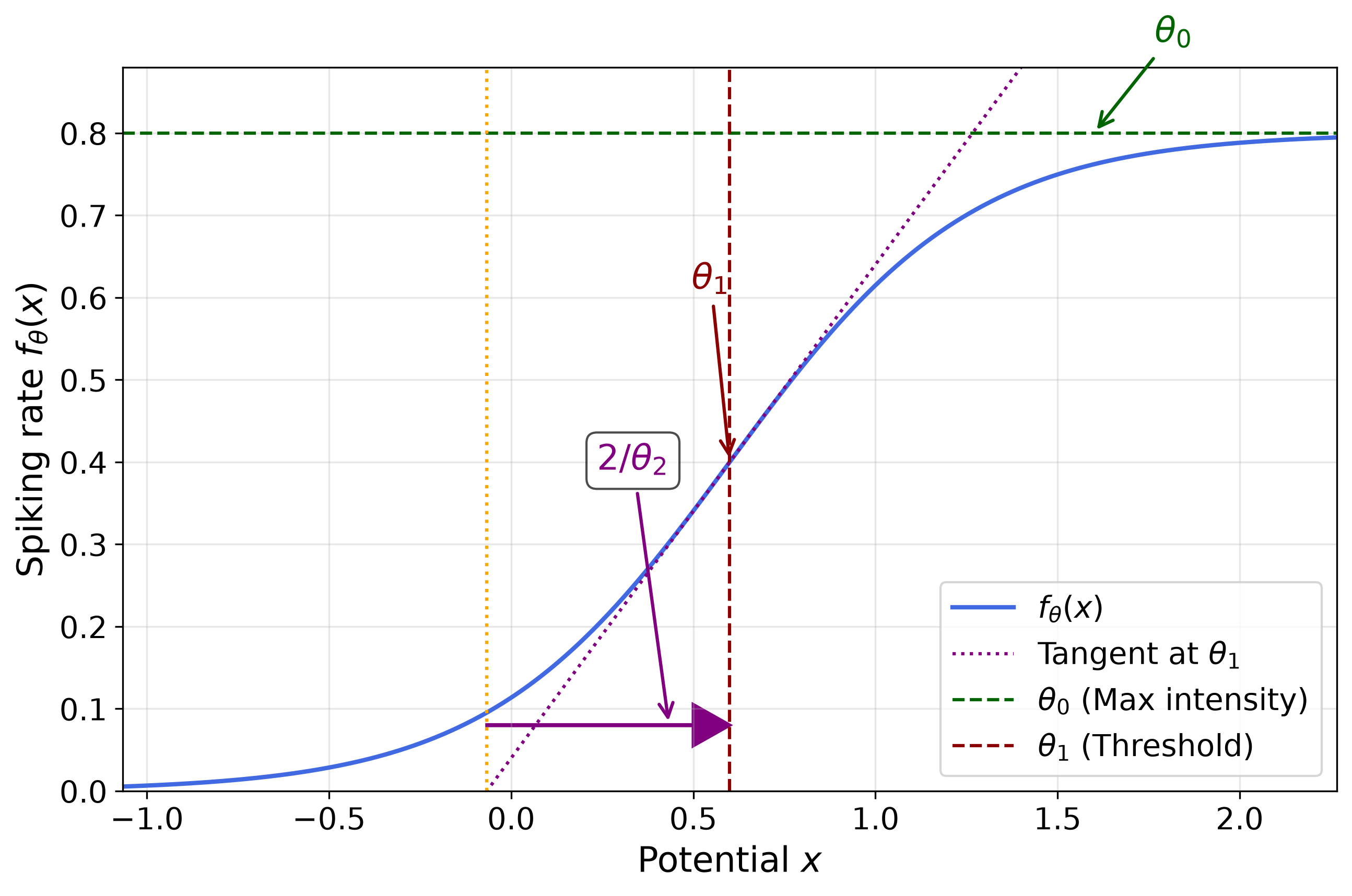}
	\captionof{figure}{Spiking rate,\newline for those new parameters.}
	\label{fig_spiking_rate_v39}
\end{minipage}

\begin{figure}[htbp]
	\centering
	\begin{subfigure}[b]{0.48\textwidth}
		\centering
		\includegraphics[width=\textwidth]{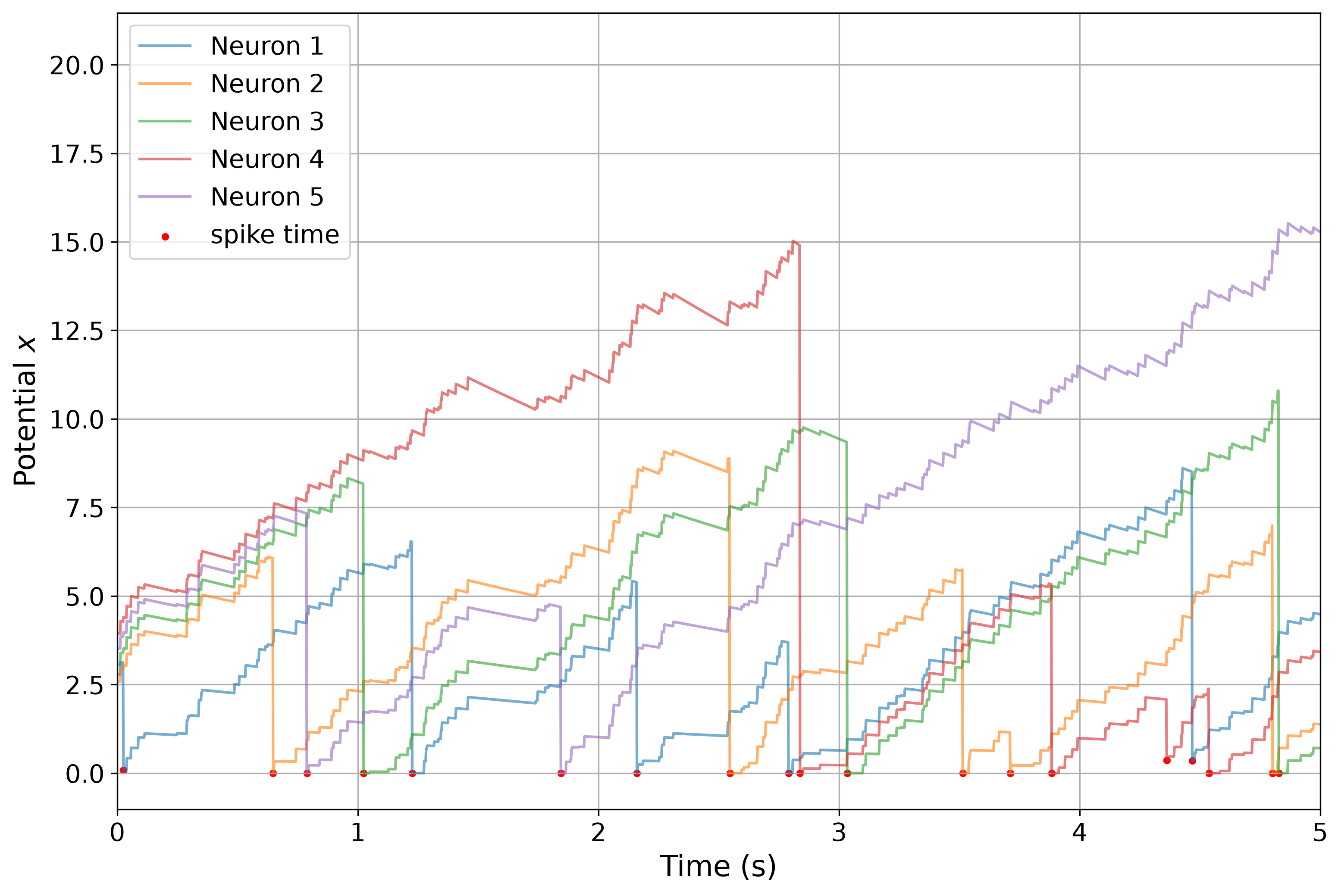}
		\caption{Neuron trajectories.}
	\end{subfigure}
	\hfill
	\vrule
	\hfill
	\begin{subfigure}[b]{0.48\textwidth}
		\centering
		\includegraphics[width=\textwidth]{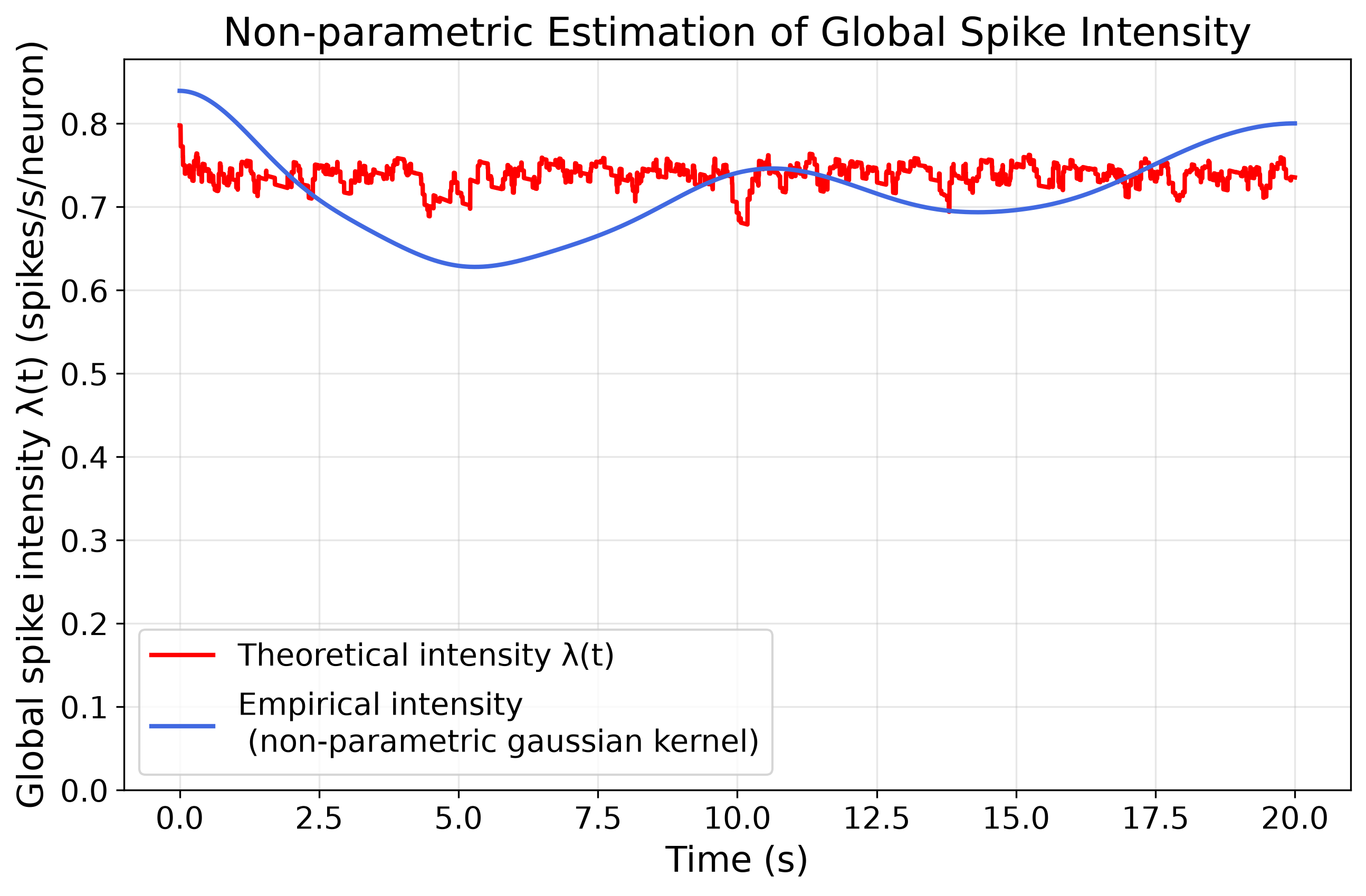}
		\caption{Estimated global spiking intensity.
			The extinction of the system can be observed.
		}
	\end{subfigure}
	\caption{Visualisation of the randomness level, for the set of parameters given in Table~\ref{tbl_v39}.}
	\label{fig_raster_N27}
\end{figure}

\begin{figure}[htbp]
	\centering
	\begin{subfigure}[b]{0.48\textwidth}
		\centering
		\includegraphics[width=\textwidth]{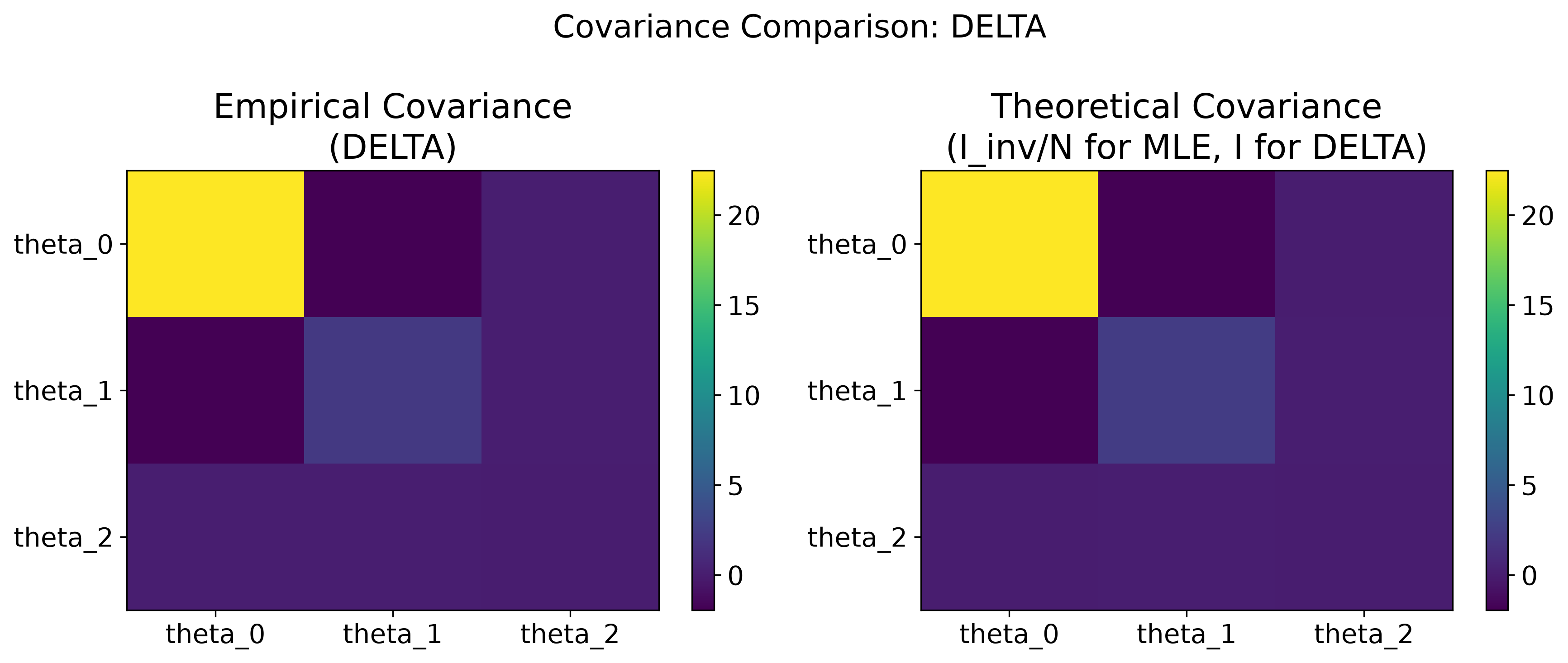}
		\caption{Empirical covariance matrix of $ \Delta^{N, \ts}_t$ (over 100 replicates) as compared to $\wht I^{N}_t$. Relative error in Frobenius norm: 0.014.}
	\end{subfigure}
	\hfill
	\vrule
	\hfill
	\begin{subfigure}[b]{0.48\textwidth}
		\centering
		\includegraphics[width=\textwidth]{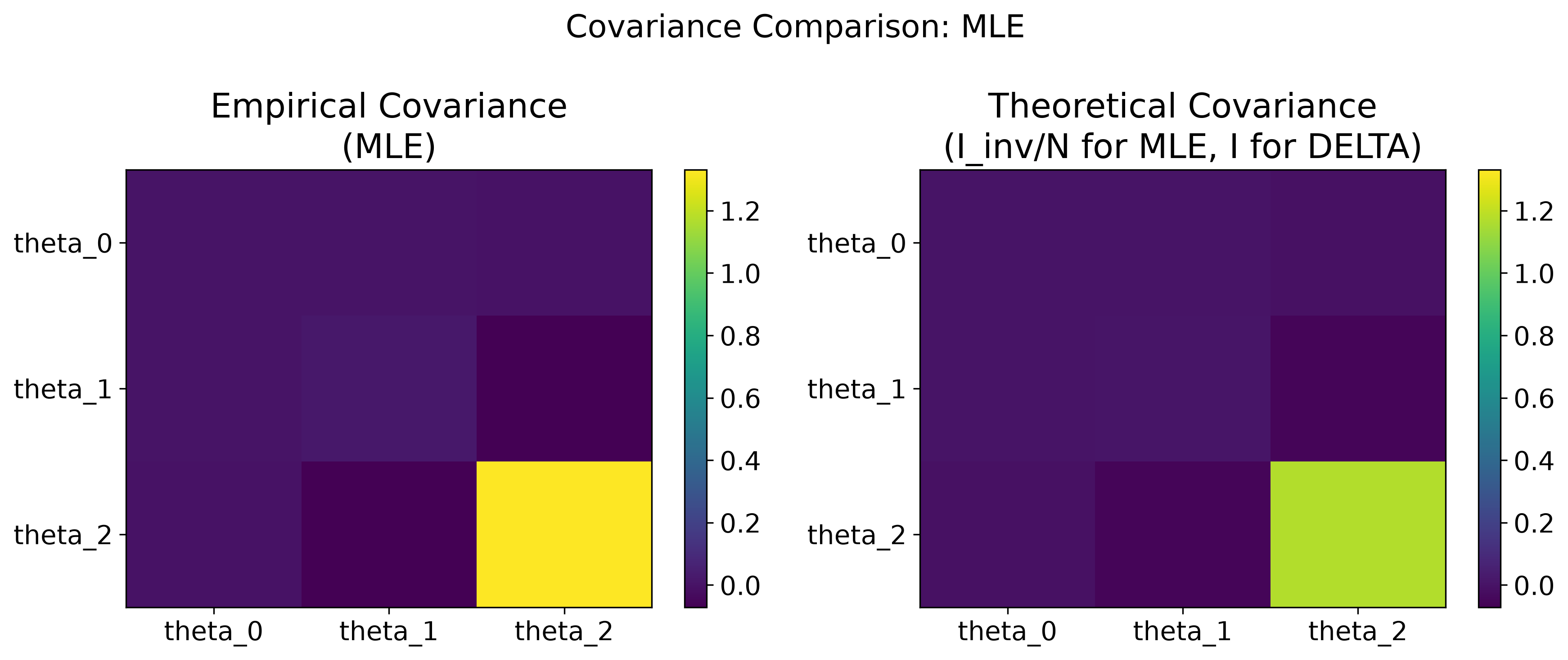}
		\caption{Empirical covariance matrix of the MLE as compared to $N^{-1} (\wht I^{N}_t)^{-1}$. Relative error in Frobenius norm: 0.14.
		}
	\end{subfigure}
	\caption{Comparison between the empirical covariance and the prediction with the estimated Fisher information matrix, for the set of parameters given in Table~\ref{tbl_v39}.}
	\label{fig_cov_comparison_v39}
\end{figure}

Table~\ref{tab:test_results_v39} displays the p-values corresponding to the different tests we conduct as well for this set of parameters. 
As we can see, the test of sphericity is rejected for the MLE (as well as the LAN predictor, and contrary to $\Delta_t^{N, \ts}$).
We suspect it is typically the one most susceptible to be rejected, especially for a case where the randomness in the Fisher information could not be neglected, notably when it is badly conditioned.
The compared histograms and cumulative distribution function of the relative error between the MLE and the LAN predictor are also provided in Figure~\ref{fig_LAN_MLE_v39}.
It is noticeable from the later 
that the deviations are much worse than for our reference set of parameters, see Figure~\ref{fig_LAN_MLE_v39}.

\begin{figure}[htbp]
	\centering
	\begin{subfigure}[b]{0.48\textwidth}
		\centering
		\includegraphics[width=\textwidth]{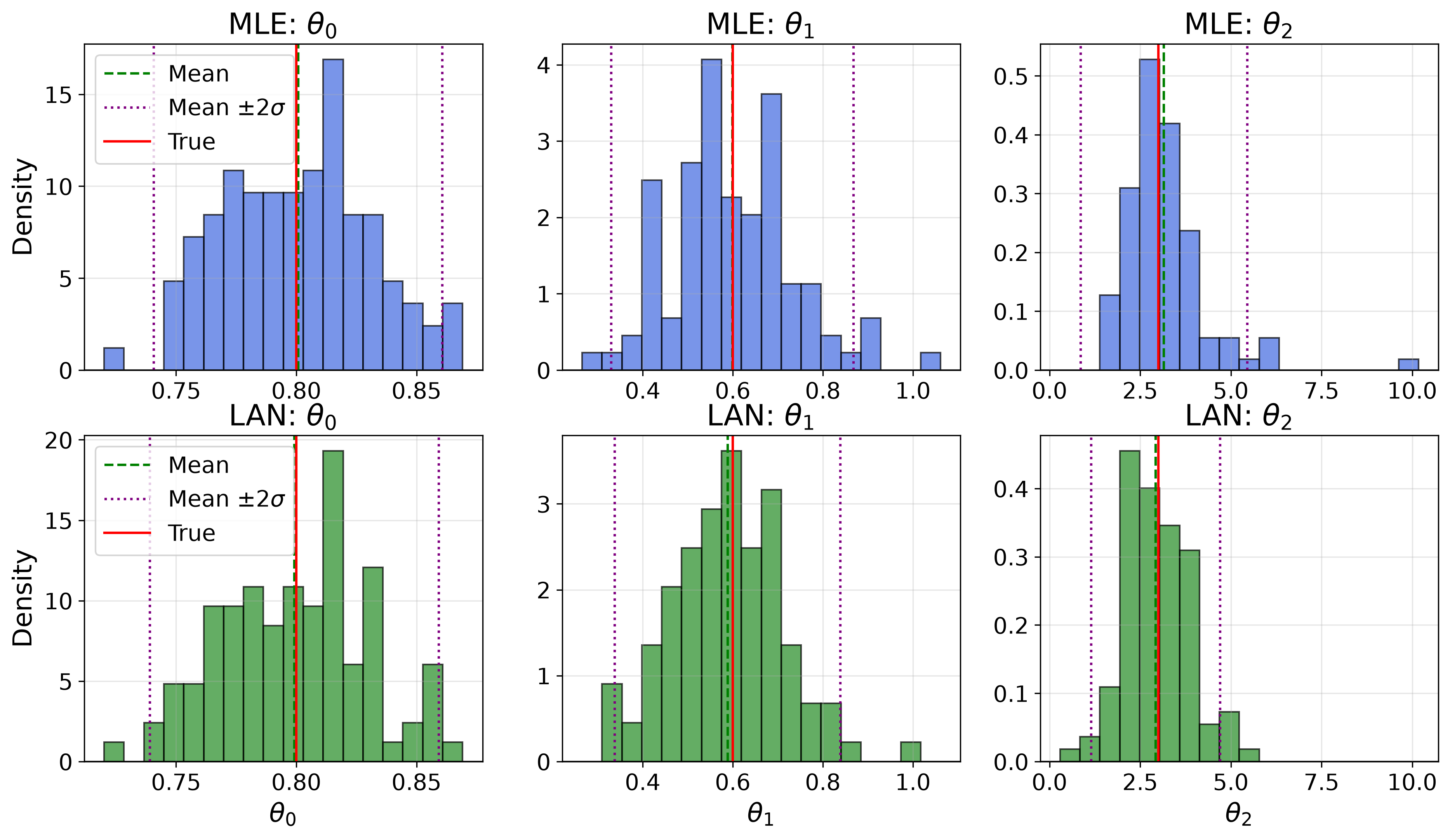}
		\caption{Histograms of the MLE (top row) and LAN predictor (bottom row) for the three parameters $\theta_0$, $\theta_1$, and $\theta_2$. }
	\end{subfigure}
	\hfill
	\vrule
	\hfill
	\begin{subfigure}[b]{0.48\textwidth}
		\centering
		\includegraphics[width=\textwidth]{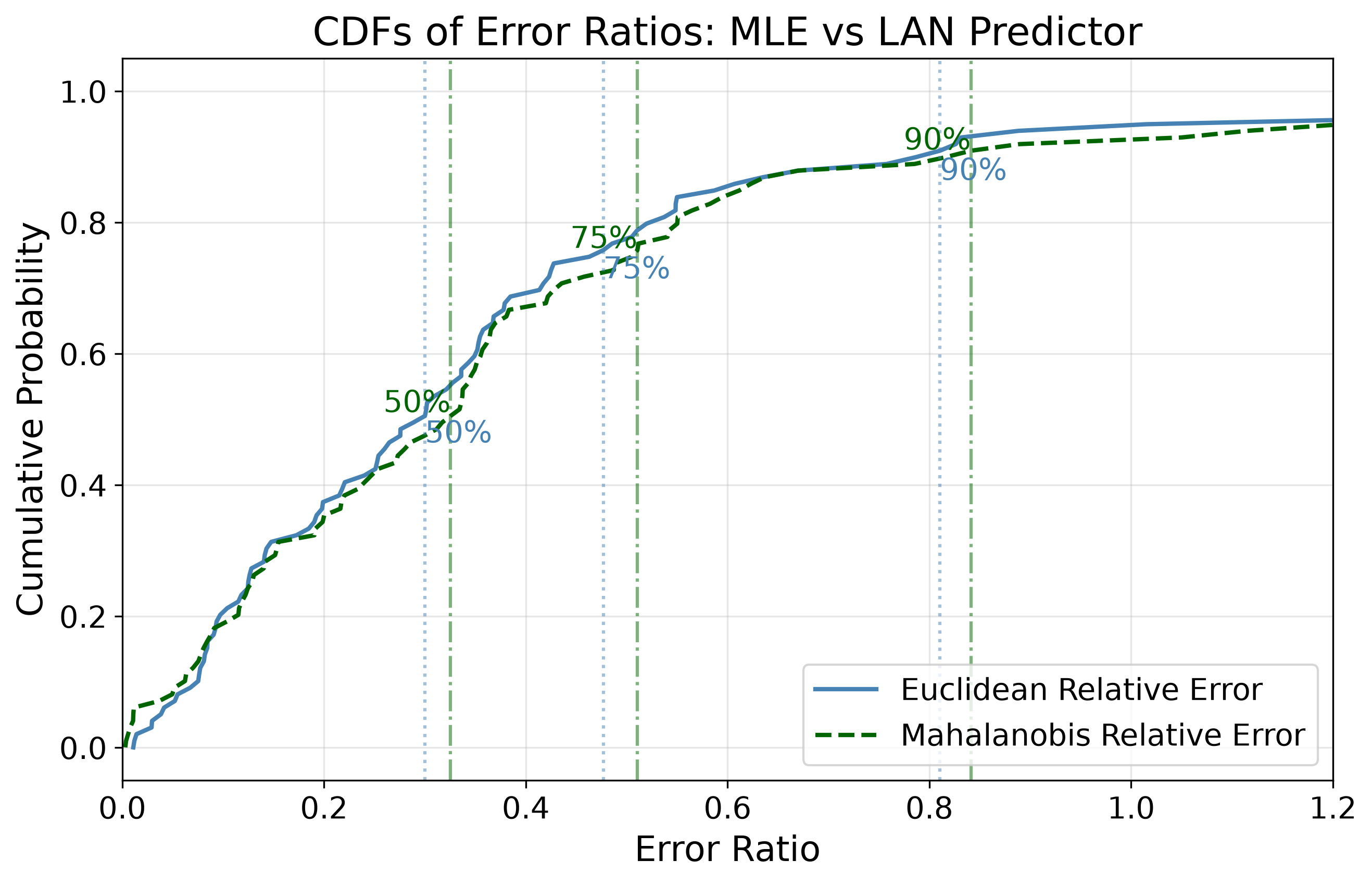}
		\caption{C.d.f. of the relative error $\|\hat \theta_t^N - \check \theta_t^N\|/\|\hat \theta_t^N - \theta_*\|$. 
		}
	\end{subfigure}
	\caption{Comparison between the estimated MLE $\hat \theta_t^N$ and the LAN predictor, for the set of parameters given in Table~\ref{tbl_v39}.}
	\label{fig_LAN_MLE_v39}
\end{figure}

\begin{table}[htbp]
	\centering
	\footnotesize
	\setlength\tabcolsep{3pt}
	\begin{tabular}{|l|ccc|}
		\hline
		&	HZ & HT & M \\
		\hline
		MLE 
		&\!\cellcolor[gray]{0.90} $3.2\,10^{-4}$ &\!0.54\! & \cellcolor[gray]{0.90} $8.5\,10^{-3}$ \\
		LAN 
		&0.60  &\!0.38\! & \cellcolor[gray]{0.90} $5.6\,10^{-3}$\\
		Delta 
		&0.58  &\!0.36\! & 0.71\! \\
		\hline
	\end{tabular}
	\caption{Test results given the p-value for the parameter set given in Table~\ref{tbl_v39}; methods: $HZ$, $HT$ and $M$ for respectively Henze-Zirkler, Hotelling's $T^2$ and Mauchly, that are tests for respectively normality, mean and sphericity; and quantities: $MLE$, $LAN$ and $Delta$ stands respectively for $\hat \theta_t^N$,  $\check \theta_t^N$ and $\wht \Delta^{N, \ts}_t/\sqrt{N}$. 
		Accepted if $p > 0.05$, Rejected (in gray) otherwise.}
	\label{tab:test_results_v39}
	
\end{table}

\end{document}